\DeclareMathOperator\dom{dom}
\DeclareMathOperator*{\esssup}{ess\,sup}
\newcommand*\D{\mathop{}\!\mathrm{d}}
\newcommand*\E{\mathop{}\!\mathrm{e}}
\newcommand*\I{\mathop{}\!\mathrm{i}}
\def\e{{\mathrm{e}}}
\newtheorem{theorem}{Theorem}[section]
\newtheorem*{theorem*}{Theorem}
\newtheorem{lemma}{Lemma}[section]
\newtheorem{proposition}{Proposition}[section]
\theoremstyle{plain}
\theoremstyle{definition}
\newtheorem{definition}{Definition}[section]
\newtheorem*{definition*}{Definition}
\newtheoremstyle{example}
  {.3\baselineskip}
  {.3\baselineskip}
  {\normalsize}  
  {0pt}       
  {\bfseries} 
  {.}         
  {5pt plus 1pt minus 1pt} 
  {}          
\theoremstyle{example}
\newtheoremstyle{remark}
  {.2\baselineskip}
  {.2\baselineskip}
  {\normalfont}
  {}
  {\bfseries}
  {\ifx\thmnote\@gobble.\else\normalfont.\fi}
  {.5em}
  {}
\theoremstyle{remark}
\newtheorem{remark}{Remark}[section]
\newlist{theoremenum}{enumerate}{1}
\setlist[theoremenum]{label=\roman*), ref=\textup{\thetheorem~\roman*)}}
\newlist{lemenum}{enumerate}{1}
\setlist[lemenum]{label=\roman*), ref=\textup{\thelemma~\roman*)}}
\newtheorem*{assumption*}{\assumptionnumber}
\providecommand{\assumptionnumber}{}
\newcommand{\opP}{\mathbf{P}}
\newcommand{\opQ}{\mathbf{Q}}
\renewcommand{\MR}{\mathbb{R}}
\newcommand{\MC}{\mathbb{C}}
\newcommand{\ME}{\mathbb{E}}
\newcommand{\MP}{\mathbb{P}}
\newcommand{\MQ}{\mathbb{Q}}
\newcommand{\cF}{\mathcal{F}}
\newcommand{\cA}{\mathcal{A}}
\newcommand{\cB}{\mathcal{E}}
\newcommand{\cC}{\mathcal{C}}
\newcommand{\cD}{\mathcal{D}}
\newcommand{\cE}{\mathcal{E}}
\newcommand{\cI}{\mathcal{I}}
\newcommand{\cK}{\mathcal{K}}
\newcommand{\cL}{\mathcal{L}}
\newcommand{\cO}{\mathcal{O}}
\newcommand{\cP}{\mathcal{P}}
\newcommand{\cG}{\mathcal{G}}
\newcommand{\cS}{\mathcal{S}}
\newcommand{\sE}{\mathsf{E}}
\newcommand{\df}{\coloneqq}
\newcommand{\one}{\mathbf{1}}
\DeclareMathAlphabet{\mymathbb}{U}{BOONDOX-ds}{m}{n}
\DeclareMathAlphabet\mathbfcal{OMS}{cmsy}{b}{n}
\newcommand{\zero}{\mymathbb{0}}
\newcommand{\interior}[1]{({\kern0pt#1})^{\textnormal{o}}}
\newcommand{\set}[1]{\left\{ #1\right\}}
\newcommand{\norm}[1]{\|#1\|}
\newcommand{\EX}[1]{\mathbb{E}\left[#1\right]}
\newcounter{Task}\setcounter{Task}{1}
\newcommand{\MRplus}{\MR^{+}}
\numberwithin{equation}{section}
\begin{document}
\sloppy
\title[Measure-Valued CARMA Processes]{Measure-Valued CARMA Processes} 

\author{Fred Espen Benth}
\address[Fred Espen Benth]{Department of Mathematics, University of Oslo, POBox 1053 Blindern, N-0316 Oslo, Norway}
\email{fredb@math.uio.no}

\author{Sven Karbach}
\address[Sven Karbach]{Korteweg-de Vries Institute for Mathematics and Informatics Institute, University of Amsterdam, Science Park 105-107, 1098 XG Amsterdam, Netherlands} 
\email{sven@karbach.org}

\author{Asma Khedher}
\address[Asma Khedher]{Korteweg-de Vries Institute for Mathematics, University of Amsterdam, Postbus 94248, NL–1090 GE Amsterdam, The Netherlands}
\email{A.Khedher@uva.nl}

\begin{abstract}
In this paper, we examine continuous-time autoregressive moving-average
(CARMA) processes on Banach spaces driven by L\'evy subordinators. We show their
existence and cone-invariance, investigate their first and second order moment
structure, and derive explicit conditions for their stationarity. Specifically, we
define a \emph{measure-valued CARMA} process as the analytically weak solution
of a linear state-space model in the Banach space of finite signed measures. By selecting suitable input, transition, and
output operators in the linear state-space model, we show that the resulting solution
possesses CARMA dynamics and remains in the cone of positive measures
defined on some spatial domain.  We also illustrate how positive measure-valued CARMA
processes can be used to model the dynamics of functionals of spatio-temporal
random fields and connect our framework to existing CARMA-type
models from the literature, highlighting its flexibility and broader
applicability.\newline{}   

\noindent \textbf{Keywords:} CARMA, Linear state-space models, Measure-valued
processes, L\'evy subordinator, Banach space-valued processes, Stationarity.  
\end{abstract}

\thanks{Fred Espen Benth and Asma Khedher are grateful for the financial support by the Research Foundation Flanders (FWO) under the grant FWO WOG W001021N}

\maketitle

\section{Introduction}
\label{sec:intro}
In this paper, we introduce and analyze a class of measure-valued, L\'evy-driven
continuous-time autoregressive moving-average (CARMA) processes, which we refer
to as \emph{measure-valued CARMA}. These processes will extend finite-dimensional
approaches to define ARMA-like time-series processes in a continuous-time
context to an infinite-dimensional functional setting.

More specifically, we generalize the concept of non-negative L\'evy-driven CARMA
processes to general separable Banach spaces by defining them as cone-invariant solutions to a particular class of continuous-time linear state-space models
driven by L\'evy subordinators. This framework encompasses existing CARMA
models, such as real-valued CARMA processes~\cite{Bro01, TC05},  their multivariate extensions~\cite{MS07, BK23}, and the Hilbert space formulation~\cite{BS18}. 
A particularly interesting case arises when the underlying Banach space is the
space of finite signed measures, and the cone is the set of all positive measures. In this scenario, measure-valued CARMA processes can be used to
capture the dynamics of (functionals of) spatio-temporal random fields. We
provide a comprehensive analysis of this measure-valued setting, demonstrating
how the short-memory and continuous-time attributes of CARMA processes adapt
naturally to infinite-dimensional, cone-valued systems.

\subsection{CARMA Processes and Linear-State Space Models}

On general state spaces, one can view CARMA processes as solutions to higher-order stochastic differential equations of the form
\begin{align}\label{eq:MCARMA-higher-order-SDE}
  \mathrm{D}^{p}X_{t} + \tilde{A}_{1} \mathrm{D}^{p-1} X_{t} 
  + \ldots + \tilde{A}_{p} X_{t} 
  = \tilde{C}_{0} \mathrm{D}^{q+1} L_{t} 
  + \tilde{C}_{1} \mathrm{D}^{q} L_{t} 
  + \ldots + \tilde{C}_{q} \mathrm{D} L_{t},
\end{align}
where $\mathrm{D} = \frac{\mathrm{d}}{\mathrm{d} t}$,
$\{\tilde{A}_{i}\}_{i=1}^{p}$ and $\{\tilde{C}_{j}\}_{j=0}^{q}$ are families
of linear operators for $p, q \in \mathbb{N}$, and $(L_{t})_{t \in\MR}$
denotes a two-sided L\'evy  process.\par{}  

To interpret a higher-order stochastic differential equation of the
form in~\eqref{eq:MCARMA-higher-order-SDE}, one draws inspiration from the analogous
case in ordinary differential equations. There, a higher-order linear equation is
reformulated as a higher dimensional first order system by introducing auxiliary
state variables. A similar approach applies to \eqref{eq:MCARMA-higher-order-SDE},
resulting in a linear state-space model with L\'evy input process, whose transition operator is given by
the companion block operator matrix of the characteristic polynomial of the
differential equation. In this way, one can also define CARMA processes in general
Banach spaces driven by L\'evy processes, provided the resulting linear
state-space model is well-posed in a stochastic strong sense, that is, one can define an Ornstein-Uhlenbeck (OU) process on a Cartesian product of the underlying Banach space.  For reference, linear
state-space models associated with CARMA processes in the multivariate setting
are discussed in~\cite{BS13} (with cone-valued extensions in~\cite{BK23}), and
their adaptation to Hilbert spaces appears in~\cite{BS18}. 

In general separable Banach spaces, the feasibility of this approach critically depends on both the properties of the Banach space and the characteristics of the driving L\'evy noise, since stochastic integration techniques are not universally available in all infinite-dimensional settings. In this paper, since our primary interest lies in positive measure-valued processes, we focus first on CARMA processes taking values in convex cones that are driven by L\'evy subordinators. We show that, under suitable conditions on the model parameters, solutions to the CARMA linear state-space equations exist and remain within the cone. From a modeling perspective, this enables the construction of CARMA processes that evolve within cones of general separable Banach spaces. But more importantly, it provides a coherent solution concept for CARMA models in this general infinite-dimensional setting.

In particular, we exploit the Pettis integral to define stochastic integrals with respect to the Poisson random measures associated with the driving L\'evy process. However, in the case of non-separable Banach spaces (such as the space of finite Borel measures equipped with the total variation norm) one must either restrict to finite-dimensional L\'evy subordinators or adopt a stochastically weak formulation of the CARMA stochastic differential equation. The latter approach is a common and natural alternative in the literature, which we briefly review and compare to our approach in the following section.

\subsection{Measure-valued Processes}
We consider a different approach than considered in large parts of the measure-valued process
literature~\cite{Eng10, Li11, Gil13}, where usually Markovian techniques are
used to establish so-called
\emph{superprocesses}. Indeed, in~\cite[Chapter 9]{Li11}, the author showed the
existence (in a stochastic weak sense) of a general class of processes taking
values in $M_+(E)$, the cone of finite positive measures on some topological
space $E$, called \emph{immigration superprocesses}. We also refer
to~\cite{cuchiero2022measurevalued}, where measure valued affine and polynomial
processes where investigated. In the analysis in~\cite{Li11}
and~\cite{cuchiero2022measurevalued}, the authors used the fact that $M_+(E)$
endowed with the weak topology is separable, and locally compact when $E$ is a locally compact Polish
space \cite{varadarajan1958weak}. This allows the use of the positive maximum principle to show the
existence of the associated martingale problem.  In our case, we consider the
cone $M_+(E)^p$ as a state space. If $E$ is compact, then the cone $M_+(E)^p$
equipped with the topology induced by the direct sum inner product $\langle \cdot, \cdot \rangle_p$ is a
locally compact Polish space as the finite product of locally compact Polish
spaces is itself locally compact and Polish. Hence following similar derivations as for example in
\cite{cuchiero2022measurevalued}, one could approach to prove existence of OU-processes by using the positive maximum principle.  However, since our model is an OU process driven by a measure-valued L\'evy subordinator, we
instead use the theory on integration with respect to Banach-valued L\'evy processes to prove the existence of an
analytically weak and stochastically strong solution of the linear state-space equations directly.\par

\subsection{Applications to Modeling Dynamics of Renewable Energy Markets}\label{sec:model-energy-forw}

CARMA processes are widely recognized for their tractability, interpretability
and flexible autocorrelation structure,  inherited by their discrete-time
ARMA versions. These features have led to their application across diverse
fields, including meteorology, engineering, and finance. In particular in the
renewable energy domain, CARMA processes have been used to model (deseasonalized)
weather and time-dependent climate variables, including wind speed~\cite{BB09,BB12}
and temperature~\cite{BB12,BT13} and solar irradiance~\cite{DRTL16,LGB}. Moreover,
in financial applications, CARMA models served as mean-reverting processes for
volatility~\cite{TT06, BL13, BK23} and power prices~\cite{BKM14, benth2020pricing}, underscoring
their versatility in the intersection of finance and weather modeling, which is
what we henceforth call the modeling of \emph{renewable energy markets}.

\subsubsection{Climate Data}
When analyzing climate data across broader geographic regions (for
instance the Netherlands) instead of a few fixed locations there is a need for CARMA models
that incorporate spatial dimensions, thus extending into the realm of
spatio-temporal random fields. Indeed, in practical energy modeling, aggregate
variables (e.g., average temperature over a region) drive market
dynamics. For instance, power prices in Southern Norway can be strongly
influenced by the regional average temperature, rather than precise temperature
measurements at individual locations. This is because renewable energy
production and consumption depend on weather variables but also require spatial
weighting based on factors such as population density (for heating demand) or
production capacity (for renewable power plant installations).

For example, let $C(t,x)$ be the \emph{capacity factor} for renewable power
production (wind or solar) at time $t$ and location $x\in\mathcal O$. The
capacity factor measures the production from a power plant with installed capacity 1MW, and is a dimensionless number taking values in the interval $[0,1]$. Integrating over a time period $[\tau_1,\tau_2]$ and an area where the installed capacity of plants at time $t$ is given by the function $\eta(t,x)$ for $x\in\mathcal O$, we get the total production $P(\tau_1,\tau_2;\mathcal O)$ (in MWh) as
\begin{align}
    P(\tau_1,\tau_2;\mathcal O)=\int_{\tau_1}^{\tau_2}\int_{\mathcal
  O}\eta(t,x)C(t,x)\D x \D t.
\end{align}
The capacity factor is depending on the wind speed (or solar irradiation) at
time $t$ in location $x$. Rather than modelling this field, we can view it as a
measure-valued process, $C(\D t,\D x)$ and model the production over $\cO$ as
\begin{align}\label{eq:production-measure}
P(\tau_1,\tau_2;\mathcal O)=\int_{\tau_1}^{\tau_2}\int_{\mathcal O}\eta(t,x)C(\D
  t,\D x).
\end{align}
The former approach to modeling spatio-temporal
dependencies involves function-valued processes, which represent variables as
functions over spatial domains that evolve dynamically over time through
infinite-dimensional stochastic differential equations. This framework has been
explored in various contexts; see, for example, \cite{benth2015derivatives,
 cox2024infinite, cox2022affine, cox2022infinite}. The latter approach that we
want to follow in this paper leverages measure-valued processes and was
introduced in \cite{cuchiero2022measurevalued}, where forward dynamics are
modeled using measure-valued affine processes. 

Motivated by the local CARMA dynamics of weather variables, we propose measure-valued CARMA processes as flexible and tractable models for
the dynamics of functionals of spatio-temporal variables. By extending CARMA models into the
spatio-temporal setting, one can capture both local dynamics at individual
points in space and the aggregate effects resulting from spatial integration. 
We propose in particular to model the capacity measure $C(\D t,\D x)$  as measure-valued CARMA, given that
locally CARMA models for the irradiation and wind speeds have been found through data analysis.

\subsubsection{Flow Forwards}
Another motivation comes from gas or electricity markets, where a distinguishing
feature is that \emph{flow forwards} deliver the underlying energy resource over a period, e.g., a day, week, month, quarter,
or year, instead of a fixed time, as with most other commodities~\cite{BK08}. As a consequence of the special structure of flow forwards, we may model the
price $F(t, \tau_1, \tau_2)$ of a flow forward with delivery over the time
interval $(\tau_1, \tau_2]$ at some time $0 \leq t \leq \tau_1$ prior to the
initial delivery date as a weighted
integral of \emph{instantaneous forward prices} $f(t,u)$ with instant delivery at a fixed
time $u$ with $\tau_1 < u \leq \tau_2$, as follows: 
\begin{align}\label{eq:flow-forward}
F(t, \tau_1, \tau_2) = \int_{\tau_1}^{\tau_2} w(u, \tau_1, \tau_2) f(t, u)\,\D u.
\end{align}
Here, the contract is financially settled at the end of the delivery period
$\tau_2$, and the weight function $w$ is given by the arithmetic average:
    \begin{align*}
    w(u, \tau_1, \tau_2) = \frac{1}{\tau_2 - \tau_1}.
    \end{align*}
    Note that the instantaneous forward price $f(t,u)$ is actually unobserved,
    and there exists no forward contract with fixed instantaneous delivery in
    the market. Therefore, we can again approach to model $f(t, \D u)$ as a
 measure-valued process such that the price of the flow forward price becomes
\begin{align}\label{eq:flow-forward-2}
F(t, \tau_1, \tau_2)=\int_{\tau_1}^{\tau_2}w(u, \tau_1, \tau_2) f(t,\D u).
\end{align}
Motivated by the CARMA electricity price model in~\cite{BKM14}, we propose to model $(f(t,\D u))_{t\geq 0}$ by
a measure-valued CARMA process in the spirit of~\cite{cuchiero2022measurevalued}, incorporating mean-reversion and a flexible
higher-order autoregressive structure observed in these markets.

\subsubsection{Power Purchase Agreements and Renewable Portfolios}

The models presented in equations~\eqref{eq:production-measure} and~\eqref{eq:flow-forward-2} facilitate the analysis of optimal allocation and decommissioning decisions for renewable energy plants, as well as the assessment of production volumes and revenues generated by renewable asset portfolios. Within this framework, the measure $C(\mathrm{d}t,\mathrm{d}x)$ captures the installed capacity across different locations and times. By multiplying this capacity with the stochastic spot power price at each location, the resulting quantity becomes a measure-valued stochastic process in both time and space.

Consider a Power Purchase Agreement (PPA) established at time $t \leq \tau_1$ for delivery during the period $[\tau_1, \tau_2]$. The price of this agreement can be represented as the conditional expectation (under an appropriate pricing measure) of future profits or losses resulting from the difference between spot prices and the contracted fixed price $K$. Specifically, the payoff for the off-taker at location $x$ is given by:
\begin{align*}
\text{Payoff}_{\text{off-taker}}(t;\tau_1,\tau_2,x) = \int_{\tau_1}^{\tau_2} V(u,x)\left(P(u,x)-K\right)\,\mathrm{d}u,
\end{align*}
where $V(u,x)$ denotes the realized power production (i.e., {\it volume}) at time $u$ and location $x$, and $P(u,x)$ denotes the corresponding spot power price.

Taking conditional expectations at time $t$, the value (or price) of the PPA for delivery between $\tau_1$ and $\tau_2$ becomes:
\begin{align*}
\mathrm{PPA}(t;\tau_1,\tau_2) &= \mathbb{E}_t\left[ \int_{\tau_1}^{\tau_2}\int_{\mathcal{X}} V(u,x)(P(u,x)-K)\,\mathrm{d}x\,\mathrm{d}u \right] \\
&= \int_{\tau_1}^{\tau_2}\int_{\mathcal{X}}\left[ g(t,u,x)\left( f(t,u,x)-K\right) + \Sigma_t(u,x) \right]\,\mathrm{d}x\,\mathrm{d}u.
\end{align*}
\clearpage
Here, $g$, $f$ and $\Sigma$ are as follows:
\begin{itemize}
\item $g(t,u,x) = \mathbb{E}_t[V(u,x)]$ is the forward expected production at time $t$ for delivery at time $u$ and location $x$.
\item $f(t,u,x)=\mathbb{E}_t[P(u,x)]$ is the forward price at time $t$ for delivery at time $u$ and location $x$.
\item $\Sigma_t(u,x)=\mathrm{Cov}_t[V(u,x),P(u,x)]$ is the conditional covariance between production and spot price, representing volumetric and price risks.
\end{itemize}

In a measure-valued framework, this complex expression simplifies elegantly to:
\begin{align*}
\mathrm{PPA}(t; \tau_1, \tau_2) = \int_{\tau_1}^{\tau_2}\int_{\mathcal{X}} \tilde{\eta}(t,u,x) X(t,\mathrm{d}x,\mathrm{d}u),
\end{align*}
with  $X(t,dx,du)=g(t,du,dx)f(t,du,dx)$ representing the combined measure of forward expected production and price. The integrand $\tilde{\eta}(t,u,x)$ incorporates both forward prices and the covariance term, capturing all relevant stochastic dynamics and spatial variation.

\subsection{Layout of the article}
The paper is structured as follows:
Section~\ref{sec:linear-state-space-models} examines continuous-time linear state
space models in separable Banach spaces driven by L\'evy subordinators. In
particular, we show the existence of weak solutions to linear state-space
equations on cones; introduce Banach-valued CARMA processes and study their
stationarity and distributional properties. In Section~\ref{sec:super-CARMA}, we
focus on the Banach space of finite signed measures defined on some topological
space, and introduce the measure-valued CARMA process. In
Section~\ref{sec:appication-expectation} we compute expectation functionals of measure-valued
CARMA processes motivated by their applications.

\section{Linear State-Space Models in Banach
Spaces}\label{sec:linear-state-space-models} 
In this section, we consider linear state-space models in general separable
Banach spaces driven by L\'evy noise. Since our focus lies on (non-negative)
measure-valued CARMA processes, we specialize the framework to linear
state-space models taking values in convex cones within Banach spaces. This
refinement imposes additional parameter constraints on the linear state-space
model and requires that the driving L\'evy process be \emph{non-decreasing}, but
it also provides a coherent solution concept in this general setting.  In
particular, we introduce a linear state-space model for CARMA processes on cones,
demonstrating both their existence and stationarity. 

\subsection{L\'evy Processes in Banach Spaces}
Throughout this paper, we adopt the following notational conventions. Let
$\mathbb{N}$ denote the set of natural numbers and let
$\mathbb{N}_0 := \mathbb{N} \cup \{0\}$ be the set of nonnegative integers. For
a complex number $z = a + \I b \in \mathbb{C}$, we write $\Re(z)$ and $\Im(z)$
for its real and imaginary parts, respectively. 

We let $(B,\|\cdot\|)$ be a separable Banach space with dual $B^*$, and use the
dual pairing $\langle f, x\rangle := f(x)$ for $f \in B^*$ and $x \in B$. We
denote by $\mathrm{Bor}(B)$ the Borel $\sigma$-algebra on $B$. Elements of $B$
are denoted by lowercase letters such as $x, y, z$, and elements of $B^*$ are
denoted by $f, g, h$. Here and throughout, we denote by $(\Omega,\mathcal F, \mathbb F,\mathbb P)$ a complete filtered probability space, where $\mathbb F=(\mathcal F)_{t\geq 0}$ is the filtration and $\mathbb P$ the probability measure. A $B$-valued L\'evy process $(L_t)_{t \geq 0}$ is a stochastic process with values in $B$ defined on the filtered probability space $(\Omega,\mathcal F, \mathbb F,\mathbb P)$ that satisfies:
\begin{itemize}
    \item[i)] $L_0 = 0$ almost surely,
    \item[ii)] $(L_t)_{t \geq 0}$ has independent and stationary increments,
    \item[iii)] $(L_t)_{t \geq 0}$ is stochastically continuous with respect to
        the norm $\|\cdot\|$, i.e., for every $\varepsilon > 0$,
$\mathbb{P}\bigl(\|L_t - L_s\| > \varepsilon\bigr) \to 0$ as $s \to t$,
    \item[iv)] $(L_t)_{t \geq 0}$ has right-continuous paths with left limits
        (càdlàg) almost surely, with respect to the norm $\|\cdot\|$.
\end{itemize} 
A set $A \in \mathrm{Bor}(B \setminus \{0\})$ is called
\emph{bounded-from-below} if $0$ does not lie in its closure under $\|\cdot\|$. 
For every $A \in \mathrm{Bor}(B \setminus \{0\})$ bounded from below and
$t > 0$, define
\begin{align*}
  N(t, A) \df \sum_{s \in [0, t]} \mathbf{1}_{A}\bigl(\Delta L_s\bigr),
\end{align*}
where $\Delta L_s := L_s - L_{s-}$. Since $(L_t)_{t \geq 0}$ has càdlàg paths,
there are only finitely many jumps of size larger than a positive constant in
any bounded-from-below set $A$. Hence, $(N(t, A))_{t \geq 0}$ is a Poisson
process, and we let $\ell(A) \df \mathbb{E}[N(1, A)]$ denote the \emph{L\'evy
measure}, which extends to a $\sigma$-finite measure on
$\mathrm{Bor}(B \setminus \{0\})$, finite on every bounded-from-below set, see also~\cite{RvG09} for additional details. 

The L\'evy process $(L_t)_{t \geq 0}$ is said to be \emph{integrable} if
$\mathbb{E}[\|L_t\|] < \infty$ for all $t \geq 0$, and \emph{square-integrable}
if $\mathbb{E}[\|L_t\|^2] < \infty$ for all $t \geq 0$. Note that the L\'evy
process $(L_t)_{t \geq 0}$ is square-integrable if and only if $$\int_{\set{z\in B\colon \norm{z}>1}}\norm{z}^{2}\ell(\D z)<\infty.$$

Set $D_0 := \{x \in B : 0 < \|x\| \leq 1\}$. From~\cite{GS75}, the
L\'evy-Khintchine representation for Banach-valued L\'evy processes states that, for every $f \in B^*$ and $t \geq 0$, the characteristic functional of $L_t$ is 
\begin{align}\label{eq:chracteristic-Levy}
  \mathbb{E}\big[\exp\big(\I \langle f, L_t\rangle\big)\big] 
  = \exp\Big( t\big(-\tfrac{1}{2}\langle f, Q f \rangle 
    + \I \langle f,\gamma\rangle + \psi(f)\big)\Big),
\end{align}
where
\begin{align*}
  \psi(f) \df \int_B \Big(\exp\big(\I \langle f,z\rangle\big) 
    - 1 - \I \langle f,z\rangle\, \mathbf{1}_{D_0}(z)\Big)\,\ell(\mathrm{d}z).
\end{align*}
In this representation, $(\gamma, Q, \ell)$ is the \emph{characteristic triplet}
of the L\'evy process, which can be interpreted as follows: $\gamma \in B$ is the drift vector
of the L\'evy process; $Q$ is the covariance operator of the continuous part of
the process, which is mapping from $B^{*}$ to $B$ and is non-negative and self-adjoint, i.e. $\langle x, Q x\rangle \geq 0$ for all $x\in B^*$ and $\langle y, Q x\rangle= \langle x,Q y\rangle$ for all $x,y\in B^{*}$; and $\ell$ is the L\'evy measure from before, defined on the Borel $\sigma$-algebra of $B \setminus \{0\}$ and is such that $\int_{D_0}|\langle f, z\rangle|^2\ell( \D z)<\infty$.
 
\subsection{L\'evy Processes on Cones in Banach Spaces}
A nonempty, closed, convex set $K \subseteq B$ is called a \emph{convex cone} if for any $\lambda \geq 0$ and $x \in K$, it holds that $\lambda x \in K$. A cone $K$ is said to be \emph{generating} if $B = K - K$, i.e. every $x \in B$ can be written as $x = y - z$, where $y,z \in K$. Moreover, we call the generating cone $K$ \emph{proper} if $x = 0$ whenever both $x \in K$ and $-x \in K$. Now, let $K$ denote a proper convex cone in $B$. We know from~\cite[Proposition 9]{ARA06}, that any $K$-increasing Lévy process in $B$, i.e. a L\'evy process $(L_t)_{t\geq 0}$ such that $L_t-L_s\in K$ $\MP$-a.s. for all $t\geq s$, assumes only values in $K$ and vice versa. We call a $K$-valued L\'evy process a \emph{subordinator}. 

Given a L\'evy measure $\ell$ on $\mathrm{Bor}(B\setminus \{0\})$, we shall say that an element $I_\ell \in B$ is  an \emph{$\ell$-Pettis centering} if
\begin{align}\label{eq:pettis-centering-1}
  \int_{D_0} |\langle f,z\rangle| \, \ell(\D z) < \infty \quad \text{for every } f \in B^*,  
\end{align}
and
\begin{align}\label{eq:pettis-centering-2}
 \langle f, I_\ell\rangle = \int_{D_0} \langle f,z\rangle \, \ell(\D z) \quad \text{for every } f \in B^*.   
\end{align}
We sometimes write $I_\ell = \int_{D_0} z \, \ell(\D z)$. Conditions sufficient
for the characteristic triplet $(\gamma,Q ,\ell)$ of a $B$-valued L\'evy process $(L_t)_{t\geq 0}$ to be a subordinator are given in~\cite{RA06}, the main result of which we recall in the following.
\begin{theorem}\label{thm:cone-subordinator}
Let $K$ be a proper convex cone of a separable Banach space $B$. Let $(L_t)_{t\geq 0}$ be a L\'evy process in $B$ with characteristic triplet $(\gamma, Q, \ell)$. Assume the following three conditions:  
\begin{theoremenum}
    \item $Q = 0$, 
    \item $\ell(B \setminus K) = 0$, i.e., $\ell$ is concentrated on $K$,
    \item\label{pettis} there exists an $\ell$-Pettis centering $I_\ell = \int_{D_0} z \, \ell(\D z)$ such that $\gamma_0 \df \gamma - I_\ell \in K$. 
\end{theoremenum}
Then the process $(L_t)_{t\geq 0}$ is a subordinator.
\end{theorem}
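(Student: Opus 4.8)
The plan is to reduce the statement to the classical one–dimensional characterisation of subordinators by testing $(L_t)_{t\ge 0}$ against the dual cone $K^{*}:=\{f\in B^{*}:\langle f,x\rangle\ge 0\text{ for all }x\in K\}$, and then to transfer the resulting pointwise positivity back to $B$ via the bipolar theorem together with the separability of $B$. This way I only need the L\'evy--Khintchine representation \eqref{eq:chracteristic-Levy} and standard facts about real-valued L\'evy processes, so no infinite-dimensional stochastic integration enters.

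First I would fix $f\in K^{*}$ and consider the real-valued process $Z^{f}_{t}:=\langle f,L_{t}\rangle$. Since $f$ is bounded and linear, $Z^{f}$ inherits the four defining properties of a L\'evy process from $(L_{t})_{t\ge 0}$, so it is a real L\'evy process with $Z^{f}_{0}=0$, and by \eqref{eq:chracteristic-Levy} with assumption i) its characteristic exponent at $u\in\MR$ is $\Psi_{f}(u)=\I u\langle f,\gamma\rangle+\psi(uf)$. A change of variables identifies the L\'evy measure $\nu^{f}$ of $Z^{f}$ as the image of $\ell$ under $z\mapsto\langle f,z\rangle$ (discarding the mass sent to $0$); by assumption ii) $\ell$ is carried by $K$, and $\langle f,z\rangle\ge 0$ on $K$ because $f\in K^{*}$, so $\nu^{f}$ is concentrated on $(0,\infty)$. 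Splitting the relevant integrals over $D_{0}$ and over $\{\|z\|>1\}$ and using \eqref{eq:pettis-centering-1} together with the finiteness of $\ell$ on bounded-from-below sets, I would check that $\int_{(0,1]}x\,\nu^{f}(\D x)<\infty$, so that $Z^{f}$ has finite variation and admits the representation $Z^{f}_{t}=d^{f}t+\sum_{0<s\le t}\Delta Z^{f}_{s}$ with only positive jumps. The crux is to identify the drift $d^{f}$: subtracting $\int_{(0,\infty)}(\E^{\I ux}-1)\,\nu^{f}(\D x)$ from $\Psi_{f}(u)$, the exponential terms cancel and, invoking the defining identity \eqref{eq:pettis-centering-2} of the $\ell$-Pettis centering, what remains is $\I u\bigl(\langle f,\gamma\rangle-\langle f,I_{\ell}\rangle\bigr)=\I u\langle f,\gamma_{0}\rangle$; hence $d^{f}=\langle f,\gamma_{0}\rangle\ge 0$ because $\gamma_{0}\in K$ by \ref{pettis} and $f\in K^{*}$. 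Consequently $Z^{f}$ is a genuine subordinator, and in particular $\langle f,L_{t}\rangle\ge 0$ almost surely for every $t\ge 0$.

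Next I would transfer this to $B$. Since $B$ is separable, bounded subsets of $B^{*}$ are weak-$*$ metrisable, so $K^{*}=\bigcup_{n\in\MN}\{f\in K^{*}:\|f\|\le n\}$ is a countable union of weak-$*$ separable sets and hence weak-$*$ separable; fix a countable weak-$*$ dense subset $D\subseteq K^{*}$. For the closed convex cone $K$ the bipolar theorem gives $K=\{x\in B:\langle f,x\rangle\ge 0\text{ for all }f\in K^{*}\}$, and by weak-$*$ density of $D$ the right-hand side equals $\{x\in B:\langle f,x\rangle\ge 0\text{ for all }f\in D\}$. Intersecting, over the countable index set $D\times(\MQ\cap[0,\infty))$, the probability-one events $\{\langle f,L_{t}\rangle\ge 0\}$ from the previous step produces an event of full probability on which $L_{t}\in K$ for every rational $t\ge 0$; by right-continuity of the paths and closedness of $K$ this forces $L_{t}\in K$ for all $t\ge 0$. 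Thus $(L_{t})_{t\ge 0}$ is a $K$-valued L\'evy process, i.e.\ a subordinator; equivalently, by \cite[Proposition~9]{ARA06} (or directly, since $L_{t}-L_{s}$ has the law of $L_{t-s}\in K$ for $s\le t$), it is $K$-increasing.

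I expect two places to require the most care. The first is the drift identification: one must track the fact that the truncation $\mathbf 1_{D_{0}}(z)=\mathbf 1_{\{\|z\|\le 1\}}$ in \eqref{eq:chracteristic-Levy} does not push forward to the natural truncation $\mathbf 1_{\{|x|\le 1\}}$ for $Z^{f}$, and it is precisely the $\ell$-Pettis centering hypothesis that makes the ensuing correction terms collapse to $\langle f,\gamma_{0}\rangle$ — so this is where \ref{pettis} is used in an essential way, and where one also needs the bounded-from-below finiteness of $\ell$ to guarantee that every integral in the bookkeeping converges. The second is the lift-back, which hinges on the weak-$*$ separability of $K^{*}$ to supply a countable separating family; without it one could only assert positivity ``for each fixed $f$'' rather than almost surely in $B$. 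As an alternative, closer to the integration-theoretic methods used elsewhere in the paper, one could instead establish a L\'evy--It\^o-type decomposition $L_{t}=\gamma_{0}t+\int_{0}^{t}\!\int_{B\setminus\{0\}}z\,N(\D s,\D z)$ directly in $B$, with the small-jump part a Pettis integral legitimised by \eqref{eq:pettis-centering-1}, and then observe that each summand lies in the closed convex cone $K$.
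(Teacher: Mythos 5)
Your argument is correct. Note, however, that the paper does not prove this statement at all: it is recalled verbatim from the cited reference \cite{RA06} (``the main result of which we recall in the following''), so there is no in-paper proof to match against. What you supply is a self-contained derivation by scalarization: for each $f\in K^*$ you show $\langle f,L_t\rangle$ is a real subordinator --- its L\'evy measure is the pushforward of $\ell$ and lives on $(0,\infty)$ by ii), the small-jump integrability follows from \eqref{eq:pettis-centering-1} on $D_0$ plus finiteness of $\ell$ on bounded-from-below sets, and the drift in the finite-variation parametrization collapses via \eqref{eq:pettis-centering-2} to $\langle f,\gamma_0\rangle\ge 0$ by iii) --- and you then lift positivity back to $B$ through the bipolar theorem, a countable weak-$*$ dense subset of $K^*$ (available since $B$ is separable), rational times, right-continuity of paths, and closedness of $K$. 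All the delicate points are handled: the mismatch between the truncation $\mathbf 1_{D_0}$ and the natural one-dimensional truncation is exactly absorbed by the Pettis centering, and the finiteness of $\ell(\{z:\langle f,z\rangle>\varepsilon\})$ follows because such sets are bounded from below. This is, in substance, the standard duality proof underlying the cited result; your closing alternative via a $B$-valued L\'evy--It\^o decomposition with a Pettis-integrable small-jump part would also work and is closer in spirit to the integration machinery the paper uses in Proposition~\ref{prop:OU-mild-sol}.
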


\noindent Observe that assumptions i)--iii) above give the particular
L\'evy-Khintchine representation (see ~\eqref{eq:chracteristic-Levy}):
\begin{align*}
 \EX{\exp\big(i \langle f, L_t\rangle\big)}= \exp\Big(t \big( i \langle f, \gamma_0\rangle+\int_{K\setminus\set{0}} \big( \E^{i \langle f, z\rangle} - 1 \big) \, \ell(\D z) \big) \Big),   
\end{align*}
since for all $f \in B^*$,
\begin{align*}
\langle f,\gamma_0\rangle = \langle f, \gamma\rangle - \int_{D_{0}\cap K} \langle f,z\rangle \, \ell(\D z).    
\end{align*}
We define the dual cone $K^*$ of $K$ by 
$$K^* = \{f \in B^*\colon \langle f, x\rangle \geq 0,\, \forall x \in K\}.$$

The Laplace transform of a subordinator $(L_t)_{t\geq 0}$ on a proper cone $K$ with Fourier transform
\begin{align*}
\mathbb{E}\left[ \exp\big(i \langle f, L_t\rangle\big) \right] = \exp\Big(t \big( i \langle f,\gamma_0\rangle+\int_{K\setminus\set{0}} \left( \E^{i \langle f, z\rangle} - 1 \right) \, \ell(\D z) \big) \Big),      
\end{align*}
is obtained for every $ f \in K^{*}$ by standard analytic continuation as
\begin{align}\label{eq:Laplace-Levy-Subordinator}
           \mathbb{E}\left[ \exp\big(\!-\!\langle f, L_t\rangle\big) \right] = \exp\Big(\!-\!t \big( \langle f,\gamma_0\rangle+\int_{K\setminus\set{0}} \big( 1 - \E^{-\langle f,z\rangle} \big) \, \ell(\D z)  \big)\Big).
\end{align}

\subsection{Linear State-Space Models in Banach
Spaces}\label{sec:linear-state-space}

Let $\mathcal{L}(B_1, B_2)$ denote the space of all bounded linear operators
acting from a Banach space $(B_1, \|\cdot\|_1)$ to another Banach space
$(B_2, \|\cdot\|_2)$. The operator norm is denoted by
$\|\cdot\|_{\mathcal{L}(B_1, B_2)}$, making $\mathcal{L}(B_1, B_2)$ itself a
Banach space. In the special case $B_1 = B_2 = B$, we write
$\mathcal{L}(B)$. Calligraphic letters, such as $\mathcal{A}$, denote operators
acting on the product space $B^p := B \times \ldots \times B$, where $p$ is a
positive integer. The product space $B^p$ is again a Banach space under the norm
\begin{align*}
  \|\mathbf{x}\|_p := \sum_{i=1}^{p} \|x^i\|, 
  \quad \text{for } \mathbf{x} = (x^1, \ldots, x^p) \in B^p.
\end{align*}
If $K$ is a convex cone in $B$, then $K^p$ is naturally a convex cone in $B^p$,
and the dual cone of $K^p$ is $\big(K^*\big)^p$. For an operator $\mathcal{A}$ on $B^p$, we denote by $(\mathcal{A}_{ij})_{1 \leq i,j \leq p}$ its block operator matrix representation. The adjoint of $\mathcal{A}$ is denoted by $\mathcal{A}^*$; similarly, if $A$ is an operator on $B$, then $A^*$ is its adjoint. The identity operator in $\mathcal{L}(B)$ is denoted by $\mathbb{I}$, and in $\mathcal{L}(B^p)$ by $\mathcal{I}_p$. For $\mathbf{g} \in (B^p)^*$, we write
$\langle \mathbf{g}, \mathbf{x}\rangle_p := \mathbf{g}(\mathbf{x})$. 

\begin{definition}[Linear State-Space Model in Banach Spaces]\label{def:linear-state-space-model-Banach}
  Let $p \in \mathbb{N}$, and let the tuple
  $(\mathcal{A}, \mathcal{E}, \mathcal{C}, L)$ consist of:
  \begin{enumerate}
    \item[i)] a \emph{state transition operator} 
    $\mathcal{A}\colon D(\mathcal{A}) \subset B^p \to B^p$, 
    \item[ii)] an \emph{input operator} 
    $\mathcal{E} \in \mathcal{L}(B, B^p)$,
    \item[iii)] an \emph{output operator}
    $\mathcal{C} \in \mathcal{L}(B^p, B)$,
    \item[iv)] a $B$-valued L\'evy process $L = (L_t)_{t \geq 0}$. 
  \end{enumerate}

  A \emph{continuous-time linear state-space model} on $B$, associated with
  $(\mathcal{A}, \mathcal{E}, \mathcal{C}, L)$, is given by the
  \textit{state-space equation}:
  \begin{align}\label{eq:state-space-X1}
    \mathrm{d} \mathbf{X}_t &= \mathcal{A} \mathbf{X}_t \,\mathrm{d}t 
      + \mathcal{E} \,\mathrm{d}L_t, \quad t \ge 0, \nonumber \\
    \mathbf{X}_0 &= \mathbf{x},
  \end{align}
  and an \emph{observation equation}:
  \begin{align}\label{eq:output-Y}
    Y_t = \mathcal{C} \mathbf{X}_t, \quad t \geq 0.\hspace{15mm}
  \end{align}
  A $B^p$-valued process $(\mathbf{X}_t)_{t \geq 0}$ satisfying
  \eqref{eq:state-space-X1} in a stochastic strong sense is called the
  \emph{state process}, and a $B$-valued process $(Y_t)_{t \geq 0}$ defined
  via~\eqref{eq:output-Y} is called the \emph{output process} of the model
  associated with $(\mathcal{A}, \mathcal{E}, \mathcal{C}, L)$.
\end{definition}

Of course, to ensure that the state and output processes are
well-defined, equation~\eqref{eq:state-space-X1} must be well-posed. In
 Banach spaces, the existence and uniqueness of solutions depend crucially on
 the properties of both the space and the driving noise. For separable Hilbert
 spaces, it is known that \eqref{eq:state-space-X1} admits a unique mild
solution under mild conditions on $L$, $\mathcal{A}$ and $\mathcal{E}$,
see~\cite{PZ07}. In more general UMD Banach spaces, existence and uniqueness results can be found in~\cite{App15, Rie15}.

In our setting, we are mainly interested in positive, i.e., cone-valued states, and therefore consider the state space to be a proper convex cone $K\subseteq B$, and the
product cone $K^p \subseteq B^p$ for the output and state processes,
respectively.
To guarantee that solutions to
equations~\eqref{eq:state-space-X1}--\eqref{eq:output-Y}, if they exist, remain in $K^p$ and
$K$, respectively, we must impose conditions on the tuple
$(\mathcal{A}, \mathcal{E}, \mathcal{C}, L)$ that ensure the cone
invariance of the solutions.

\begin{definition}[cf.\ \cite{Lem98}]
Let $B$ be a Banach space and $K \subseteq E$ a cone. A (possibly unbounded) linear operator $A\colon \dom(A) \subseteq B \to B$ is called \emph{quasi-monotone increasing} with respect to $K$ if for all $x, y \in \dom(A)$ it holds: $x \leq_K y \quad \text{and} \quad \langle f, x \rangle = \langle f, y \rangle$ for all $f \in K^*$ implies $\langle f, A(x) \rangle \leq \langle f, A(y) \rangle \quad \text{for all} \quad f \in K^*$, where $\leq_K$ denotes the partial order induced by $K$.
\end{definition}

Note that by \cite[Theorem 1]{Lem98} if $A$ is quasi-monotone and generates a strongly continuous operator semigroup $(S_t)_{t\geq 0}$ in $\cL(B)$, then $S_t(K) \subseteq K$ for all $t \geq 0$.

In the next proposition, we show that under suitable conditions on the cone $K$ and parameters $(\mathcal{A}, \mathcal{E}, \mathcal{C}, L)$ there exists a process $(\mathbf{X}_t)_{t\geq 0}$ with values in $K^p$ that, for any test function $\mathbf{g} \in D(\mathcal{A}^*) \subset (B^{p})^{*}$, satisfies the following weak integral equation:
\begin{align}\label{eq:weak-solution-state-space}
  \langle \mathbf{g}, \mathbf{X}_t \rangle_p &= \langle \mathbf{g}, \mathbf{X}_0 \rangle_p 
  + \int_0^t \langle \mathcal{A}^* \mathbf{g}, \mathbf{X}_s \rangle_p \,\D s
  + \int_0^t \langle \mathbf{g}, \mathcal{E} \,\D L_s \rangle_p.
\end{align}

\begin{proposition}\label{prop:OU-mild-sol}
Let $(B, \|\cdot\|)$ be a separable Banach space, and let $K \subseteq B$ be a
proper convex cone whose dual cone $K^*$ generates $B^*$. Suppose $(L_t)_{t \geq
0}$ is a L\'evy subordinator with characteristic triplet $(\gamma, 0, \ell)$,
where $\gamma \in B$ is the drift term, $0$ indicates the absence of a Gaussian component, and $\ell$ is a L\'evy measure concentrated on $K$ satisfying Theorem~\ref{pettis}. Denote by $N(\mathrm{d}s,\mathrm{d}z)$ the Poisson random measure for the jumps of $(L_t)_{t \geq 0}$, and define the \emph{compensated Poisson random measure} as
\begin{align*}
\tilde{N}(\D s, \D z) \df N(\D s, \D z) - \ell(\D z) \D s.     
\end{align*}
Further, assume that $\mathcal{A}$ is quasi-monotone and generates a strongly continuous semigroup $(\mathcal{S}_t)_{t \geq 0}$ on $B^p$, and that $\mathcal{E} \in \mathcal{L}(B, B^p)$ satisfies $\mathcal{E}(K) \subseteq K^p$. 

Then for every $\mathbf{x} \in K^p$, there exists a unique state process
$(\mathbf{X}_t)_{t \geq 0}$ that is the \emph{analytically weak solution} of the
linear state-space equation~\eqref{eq:state-space-X1} satisfying~\eqref{eq:weak-solution-state-space}. Moreover, this solution admits the variation-of-constant representation
\begin{align}\label{eq:variation-of-constant}
\mathbf{X}_t&=\cS_t\mathbf{x}+\int_0^t\cS_{t-s}\cB\gamma \D s+\int_0^t\int_{\set{z\in K\colon 0<\norm{z}\leq 1}}\cS_{t-s}\cB z\tilde{N}(\D s,\D z)\nonumber\\
    &\quad +\int_0^t\int_{\set{z\in K\colon \norm{z}> 1}} \cS_{t-s}\mathcal{E}z N(\D s,\D z), 
\end{align}
$\mathbb{P}$-almost surely, and remains in $K^p$ for all $t \geq 0$.
\end{proposition}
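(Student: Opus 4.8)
The plan is to split the statement into three pieces: existence/uniqueness of a process satisfying the weak equation~\eqref{eq:weak-solution-state-space}, the identification of that process with the variation-of-constants formula~\eqref{eq:variation-of-constant}, and the cone invariance $\mathbf{X}_t\in K^p$. The natural route is to first construct the candidate process via~\eqref{eq:variation-of-constant} and then verify it solves the weak equation, rather than the other way around. So I would begin by checking that each term on the right-hand side of~\eqref{eq:variation-of-constant} is well-defined as a $B^p$-valued random variable. The deterministic terms $\cS_t\mathbf{x}$ and $\int_0^t\cS_{t-s}\cB\gamma\,\D s$ are Bochner integrals that cause no trouble since $s\mapsto\cS_{t-s}\cB\gamma$ is continuous hence strongly measurable and bounded on $[0,t]$. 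The large-jumps term $\int_0^t\int_{\{\|z\|>1\}}\cS_{t-s}\mathcal{E}z\,N(\D s,\D z)$ is an almost surely finite sum since the L\'evy measure is finite on $\{\|z\|>1\}$ and $N$ has finitely many atoms there on $[0,t]$. The small-jumps compensated integral $\int_0^t\int_{\{0<\|z\|\le 1\}}\cS_{t-s}\cB z\,\tilde N(\D s,\D z)$ is the delicate one: I would define it as a Pettis-type integral, testing against $\mathbf{g}\in(B^p)^*$ and using that $\int_{D_0}|\langle\mathbf{g},\cS_{t-s}\mathcal Ez\rangle_p|^2\,\ell(\D z)<\infty$ follows from the $\ell$-Pettis centering condition together with boundedness of $\cS_{t-s}$ and $\mathcal E$ (here one may need the integrand to be the difference of an $L^2$ martingale integrand, exploiting that $\cB z\in K^p$ and $K^*$ generates $B^*$ to control the norm). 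This is where I expect the main technical obstacle to lie: making the stochastic integral against the Poisson random measure rigorous in a possibly merely separable (not UMD, not Hilbert) Banach space, which is exactly why the weak/Pettis formulation is chosen.

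\textbf{Once the process is constructed}, I would verify~\eqref{eq:weak-solution-state-space}. Fix $\mathbf{g}\in D(\mathcal A^*)$. Apply $\langle\mathbf{g},\cdot\rangle_p$ to~\eqref{eq:variation-of-constant}; using the semigroup identity $\langle\mathbf{g},\cS_{t-s}y\rangle_p=\langle\mathbf{g},y\rangle_p+\int_s^t\langle\mathcal A^*\mathbf{g},\cS_{r-s}y\rangle_p\,\D r$ (valid for $\mathbf{g}\in D(\mathcal A^*)$), substitute into each of the four terms and interchange the order of integration — a stochastic Fubini for the Poisson and compensated Poisson integrals, and an ordinary Fubini for the drift term. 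Collecting terms, the integrand of the outer $\D r$ integral reassembles into $\langle\mathcal A^*\mathbf{g},\mathbf{X}_r\rangle_p$, and the remaining terms collapse to $\langle\mathbf{g},\mathbf{X}_0\rangle_p+\int_0^t\langle\mathbf{g},\mathcal E\,\D L_s\rangle_p$, which is precisely~\eqref{eq:weak-solution-state-space}. The stochastic Fubini step needs the same $L^2$-integrability bound established in the construction, so no new ingredient is required.

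\textbf{For uniqueness}, suppose $(\mathbf{X}_t)$ and $(\mathbf{X}'_t)$ both satisfy~\eqref{eq:weak-solution-state-space}; their difference $\mathbf{D}_t=\mathbf{X}_t-\mathbf{X}'_t$ satisfies $\langle\mathbf{g},\mathbf{D}_t\rangle_p=\int_0^t\langle\mathcal A^*\mathbf{g},\mathbf{D}_s\rangle_p\,\D s$ for all $\mathbf{g}\in D(\mathcal A^*)$, with $\mathbf{D}_0=0$. Since $D(\mathcal A^*)$ is weak$^*$-dense and separates points of $B^p$ (as $\mathcal A$ generates a $C_0$-semigroup), a standard argument — test against $\mathbf{g}=\cS_{T-t}^*\mathbf{h}$ for $\mathbf{h}\in D((\mathcal A^*)^2)$ or differentiate $t\mapsto\langle\cS^*_{T-t}\mathbf{h},\mathbf{D}_t\rangle_p$ and find it constant $=0$ — forces $\langle\mathbf{h},\mathbf{D}_T\rangle_p=0$ for all $\mathbf{h}$ in a separating set, hence $\mathbf{D}_T=0$ $\mathbb P$-a.s. for every $T$, and by càdlàg-type pathwise arguments (or separability) for all $T$ simultaneously.

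\textbf{Finally, cone invariance.} Here I would invoke two facts supplied by the hypotheses: $\mathcal A$ quasi-monotone increasing and generating a $C_0$-semigroup implies, by \cite[Theorem 1]{Lem98}, that $\cS_t(K^p)\subseteq K^p$ for all $t\ge 0$ (applying the cited result on the product space $B^p$ with cone $K^p$, whose dual cone is $(K^*)^p$ as noted before the proposition); and $\mathcal E(K)\subseteq K^p$ by assumption. Then in~\eqref{eq:variation-of-constant}: $\cS_t\mathbf{x}\in K^p$ since $\mathbf{x}\in K^p$; $\gamma\in K$ gives $\cS_{t-s}\mathcal E\gamma\in K^p$, and the Bochner integral of a $K^p$-valued integrand stays in the closed convex cone $K^p$; the large-jumps term is a sum of elements $\cS_{t-s}\mathcal Ez$ with $z\in K$ (as $\ell$ is concentrated on $K$), hence in $K^p$. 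The subtle term is again the compensated small-jumps integral, which is not individually $K^p$-valued. To handle it, I would recombine it with the drift: writing $\gamma=\gamma_0+I_\ell$ with $\gamma_0\in K$ and $I_\ell=\int_{D_0}z\,\ell(\D z)$ the Pettis centering, the sum $\int_0^t\cS_{t-s}\mathcal E I_\ell\,\D s+\int_0^t\int_{D_0}\cS_{t-s}\mathcal Ez\,\tilde N(\D s,\D z)$ equals the (uncompensated) Pettis integral $\int_0^t\int_{D_0}\cS_{t-s}\mathcal Ez\,N(\D s,\D z)$, which is again an a.s.\ finite sum of $K^p$-elements. Thus $\mathbf{X}_t=\cS_t\mathbf{x}+\int_0^t\cS_{t-s}\mathcal E\gamma_0\,\D s+\int_0^t\int_{\{\|z\|>0\}}\cS_{t-s}\mathcal Ez\,N(\D s,\D z)\in K^p$ for each fixed $t$, $\mathbb P$-a.s.; and since $K^p$ is closed, a.s.\ right-continuity of paths upgrades this to hold simultaneously for all $t\ge 0$. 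The one point demanding care in this last step is justifying that the compensated integral plus the $I_\ell$-drift genuinely coincides with the uncompensated Pettis sum — this is a matter of the Pettis-integral bookkeeping and the definition of $I_\ell$ via~\eqref{eq:pettis-centering-2}, tested against arbitrary $\mathbf g\in(B^p)^*$.
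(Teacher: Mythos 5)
Your proposal follows the same overall architecture as the paper's proof: the crux in both is showing that the integrand $(s,z)\mapsto \cS_{t-s}\cE z$ is stochastically integrable against the (compensated) Poisson random measure by reducing to Pettis integrability against $\ell(\D z)\D s$, constructing the solution via the variation-of-constants formula, and exploiting $\cS_t(K^p)\subseteq K^p$ (from quasi-monotonicity via \cite{Lem98}) together with $\cE(K)\subseteq K^p$. You are in fact more explicit than the paper on the verification of \eqref{eq:weak-solution-state-space}, on uniqueness, and on cone invariance, all of which the paper delegates to \cite[Theorems 5.2 and 7.2]{RvG09}; your recombination of the compensated small-jump integral with the $I_\ell$-part of the drift to obtain a manifestly $K^p$-valued representation is exactly the right mechanism for positivity.

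Two imprecisions are worth correcting. First, you lead with the criterion $\int_{D_0}|\langle\mathbf{g},\cS_{t-s}\cE z\rangle_p|^2\,\ell(\D z)<\infty$ and attribute it to the $\ell$-Pettis centering condition; the centering condition \eqref{eq:pettis-centering-1} is a \emph{first}-moment condition, and in a general separable Banach space (not martingale type 2, not UMD) a scalar second-moment bound for each functional does not by itself produce a $B^p$-valued stochastic integral. The argument the paper actually runs — and which you gesture at in your parenthesis — is to write $\mathbf{g}=\mathbf{g}^+-\mathbf{g}^-$ with $\mathbf{g}^\pm\in(K^p)^*$ (possible because $K^*$ generates $B^*$), use that the integrand is $K^p$-valued to drop absolute values, and verify the first-order condition \eqref{eq:integration-condition-1} together with the existence of the barycenter $Y_t=\int_0^t\cS_{t-s}\cE I_\ell\,\D s$ in $B^p$; this is precisely the hypothesis of \cite[Theorem 5.2]{RvG09}. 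Second, the uncompensated integral $\int_0^t\int_{D_0}\cS_{t-s}\cE z\,N(\D s,\D z)$ is in general \emph{not} an a.s.\ finite sum: $\ell$ may be infinite on $D_0$ and there may be infinitely many small jumps in $[0,t]$. It is instead a convergent series of elements of $K^p$ (monotone convergence of $\langle\mathbf{g},\cdot\rangle_p$ for $\mathbf{g}\in(K^p)^*$, with finite expectation supplied by the Pettis condition), and membership in $K^p$ then follows because $K^p$ is closed. Neither issue is fatal, but both need to be stated correctly for the proof to stand in the stated generality.
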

\begin{proof}
We follow the approach in \cite[Theorem 7.2]{RvG09} to show that the process $(\mathcal{S}_{t-s} \mathcal{E} z)_{s \leq t}$ is stochastically integrable with respect to $\tilde{N}(\D s, \D z)$.
According to \cite[Theorem 5.2]{RvG09}, this is equivalent to showing that $(\mathcal{S}_{t-s} \mathcal{E} z)_{s \leq t}$ is Pettis integrable with respect to the measure $\ell(\D z) \D s$ on $ K\times [0, t]$. 

To prove this, we need to show that for all $\mathbf{g} \in B^{p*}$, the following integral is finite:
\begin{align}\label{eq:integration-condition-1}
    \int_{0}^{t}\int_{\{ z \in K \colon 0<\|z\|\leq 1 \}} | \langle \mathbf{g}, \mathcal{S}_{t-s} \mathcal{E} z \rangle_p | \, \ell(\D z) \D s < \infty, \quad \forall t \geq 0,
\end{align}
and that there exists an element $Y_t \in B^p$ such that for all $\bm g \in (B^{p})^{*}$, it holds that 
\begin{align}\label{eq:integration-condition-2}
 \langle \mathbf{g}, Y_t \rangle =  \int_{0}^{t}\int_{\{ z \in K \colon 0<\|z\|\leq 1 \}} \langle \mathbf{g}, \mathcal{S}_{t-s} \mathcal{E} z \rangle \, \ell(\D z) \D s.
\end{align}
Write $\mathbf{g}=\mathbf{g}^{+}-\mathbf{g}^{-}$ with $\mathbf{g}^{+},\mathbf{g}^{-}\in (K^{p})^{*}$ and note that since $\langle \mathbf{g}, \mathcal{S}_{t-s} \mathcal{E} z \rangle_p = \langle \mathbf{g}^{+}\, \mathcal{S}_{t-s} \mathcal{E} z \rangle_p-\langle \mathbf{g}^{-}, \mathcal{S}_{t-s} \mathcal{E} z \rangle_p$ and $\mathcal{S}_{t-s} \mathcal{E} z\in K^{p}$ by assumption the integrability condition reduces to

$$\int_{0}^{t}\int_{\set{z\in K\colon 0<\norm{z}\leq 1}}\langle \mathbf{g}^{\pm},\mathcal S_s\cB z\rangle_p\, \ell(\D z)\D s=\int_{D_{0}\cap K}\langle A(t),z\rangle\,\ell(\D z)<\infty,$$
where $A^{\pm}(t)=\int_{0}^{t}\cB^{*}\mathcal S^{*}_s\mathbf{g}^{\pm}\D s\in B^{*}$ and the finiteness follows from the Pettis integrability of the L\'evy measure $\ell$. Similarly, define
$$
Y_t = \int_{0}^{t} \mathcal{S}_{t-s} \mathcal{E} I_\ell \, \D s,
$$
where
$$
I_\ell = \int_{\{ z \in K \colon 0<\|z\|\leq 1 \}} z \, \ell(\D z)
$$
is the Pettis centering of $\ell$. Then, by~\eqref{eq:pettis-centering-2} for all $\mathbf{v} \in B^{p*}$, we have
\begin{align*}
\langle \mathbf{g}, Y_t \rangle &= \int_{0}^{t} \langle \mathbf{g}, \mathcal{S}_{t-s} \mathcal{E} I_\ell \rangle \, \D s \\
&= \int_{0}^{t} \int_{\{ z \in K \colon 0<\|z\|\leq 1 \}} \langle \mathbf{g}, \mathcal{S}_{t-s} \mathcal{E} z \rangle \, \ell(\D z) \D s,
\end{align*}
which confirms \eqref{eq:integration-condition-2}. Therefore, by \cite[Theorem 7.2]{RvG09}, there exists an analytically weak solution $(\mathbf{X}_t)_{t \geq 0}$ satisfying \eqref{eq:weak-solution-state-space} and represented by the variation-of-constants formula \eqref{eq:variation-of-constant}. 
\end{proof}

Let $(P_t)_{t \ge 0}$ denote the transition semigroup associated with the state-space process $(\mathbf{X}_t)_{t \ge 0}$ from Proposition~\ref{prop:OU-mild-sol}, acting on a suitable class of functions $f \colon B^{p} \to \mathbb{R}$ by
\begin{align}\label{eq:transition-semigroup}
(P_t f)(\mathbf{x}) \df \mathbb{E}\left[ f(\mathbf{X}_t) \mid \mathbf{X}_0 = \mathbf{x} \right], \quad \mathbf{x} \in K^{p}. 
\end{align}
In particular, we are interested in evaluating $P_t$ for any $t\geq 0$ on exponential-type functions $f$ of the form 
$\mathrm{e}^{- \langle \mathbf{g}, \cdot \rangle_p}$, where $\mathbf{g} \in
(K^{p})^{*}$. The next proposition provides a closed-form expression for $P_t$
on these exponential functions and characterizes the infinitesimal generator of
the semigroup $(P_t)_{t\geq 0}$ on a suitable domain. 
\begin{proposition}\label{prop:transition-semigroup}
Let $(\mathbf{X}_t)_{t \ge 0}$ be the $K^p$-valued state-space process given by~\eqref{eq:variation-of-constant}. Then:
\begin{enumerate}
  \item[i)] For $\mathbf{g} \in (K^{p})^{*}$, the
transition semigroup $(P_t)_{t \ge 0}$ satisfies
\begin{align}\label{eq:Pt-exponential}
(P_t e^{-\langle \mathbf{g}, \cdot \rangle_p})(\mathbf{x}) &= \E^{-\langle \mathbf{g}, \mathcal{S}_t \mathbf{x} \rangle_p}+\exp\Big(- \int_0^t \big( \langle \mathbf{g}, \mathcal{S}_{t-u} \mathcal{E} \gamma_0 \rangle_{p}\D u\Big)\nonumber\\
&\quad \times \exp\Big(\int_{0}^{t}\int_{K\setminus\set{0}} \big( e^{\langle \mathbf{g}, \mathcal{S}_{t-u} \mathcal{E} z \rangle_p} - 1 \big) \, \ell( \D z ) \big)\D u \Big),
\end{align}
for all $\mathbf{x} \in K^p$, where $\gamma_0= \gamma - \int_{\{z \in K\colon 0<\|z\|\leq1\}} z \, \ell(\D z)$.
\item[ii)] Let $\mathbf{g}_1, \ldots, \mathbf{g}_n \in (K^p)^* \cap D(\mathcal{A}^*)$, and let $\phi \in C_0^2(\mathbb{R}^n)$, and set $\mathbf{u}(\mathbf{x}) := \big( \langle \mathbf{g}_1, \mathbf{x} \rangle_p, \ldots, \langle \mathbf{g}_n, \mathbf{x} \rangle_p \big)$ and $
\bm{\xi}(z) := \big( \langle \mathbf{g}_1, \mathcal{E} z \rangle_p, \ldots, \langle \mathbf{g}_n, \mathcal{E} z \rangle_p \big)$. Next, define the cylindrical function $f \colon B^p \to \mathbb{R} $ by
\begin{align*}
f(\mathbf{x}) = \phi\big( \langle \mathbf{g}_1, \mathbf{x} \rangle_p, \ldots, \langle \mathbf{g}_n, \mathbf{x} \rangle_p \big).
\end{align*}
Then $f \in D(\mathcal{G})$, and the generator $\mathcal{G}$ of the transition semigroup $(P_t)_{t \ge 0}$ is given by
\begin{align}\label{eq:generator}
(\mathcal{G} f)(\mathbf{x}) 
&= \sum_{i=1}^n \partial_i \phi(\mathbf{u}(\mathbf{x})) \cdot \langle \mathbf{g}_i, \mathcal{E} \gamma \rangle_p 
+ \sum_{i=1}^n \partial_i \phi(\mathbf{u}(\mathbf{x})) \cdot \langle \mathcal{A}^* \mathbf{g}_i, \mathbf{x} \rangle_p \\
&\quad + \int_K \left( \phi(\mathbf{u}(\mathbf{x}) + \bm{\xi}(z)) 
- \phi(\mathbf{u}(\mathbf{x})) 
- \sum_{i=1}^n \partial_i \phi(\mathbf{u}(\mathbf{x})) \cdot \xi_i(z) \cdot \mathbf{1}_{\|z\| \le 1} \right) \ell(\D z).\nonumber
\end{align}
\item[iii)] For $\mathbf{g} \in (K^{p})^{*}\cap D(\mathcal{A}^{*})$, the process $(\mathbf{X}_t)_{t \ge 0}$ is the unique solution to the martingale problem associated with the generator $\mathcal{G}$ in~\eqref{eq:generator}. Moreover, for each $i \in \{1,\ldots,p\}$, we have
\begin{align}\label{eq:martingale-solution-linear}
    \langle g^{(i)}, X_{t}^{i} \rangle &= \langle g^{(i)}, X_0^{i} \rangle +t\Big(\langle g^{(i)}, (\mathcal{E} \gamma )^{i} \rangle +\int_{\set{z\in K\colon \norm{z}> 1}} \langle g^{(i)}, (\mathcal{E} z )^{i} \rangle \, \ell(\D z)\Big)\nonumber\\
    &\quad+ \langle g^{(i)}, M_t^{i} \rangle+ \int_0^t \sum_{j=1}^p \langle \mathcal{A}_{ij}^* g^{(i)}, X_s^{j} \rangle\D s\,, 
\end{align}
where
\begin{align*}
 \langle g^{(i)}, M_t^{i} \rangle = \int_0^t \int_{K} \langle g^{(i)}, (\mathcal{E} z )^{i} \rangle \tilde{N}( \D s, \D z)   
\end{align*}
is a purely discontinuous martingale.
\end{enumerate}
\end{proposition}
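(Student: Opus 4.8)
The plan is to prove the three parts in order, with i) and ii) resting on the variation-of-constants representation~\eqref{eq:variation-of-constant} together with the L\'evy--It\^o decomposition of $L$, and iii) combining these with a measure-determinacy argument. For part i), I would first recombine the deterministic terms in~\eqref{eq:variation-of-constant}: since $\gamma-\int_{\{0<\|z\|\le1\}\cap K}z\,\ell(\D z)=\gamma_0$, the non-random part of $\mathbf{X}_t$ equals $\mathcal{S}_t\mathbf{x}+\int_0^t\mathcal{S}_{t-s}\mathcal{E}\gamma_0\,\D s$, while the random part is the honest (uncompensated) Poisson integral $\int_0^t\int_{K\setminus\{0\}}\mathcal{S}_{t-s}\mathcal{E}z\,N(\D s,\D z)$. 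Pairing with $\mathbf{g}\in(K^p)^*$ and using $\mathcal{S}_{t-s}\mathcal{E}z\in K^p$ (as in Proposition~\ref{prop:OU-mild-sol}, via quasi-monotonicity of $\mathcal{A}$ and $\mathcal{E}(K)\subseteq K^p$), the integrand $\langle\mathbf{g},\mathcal{S}_{t-s}\mathcal{E}z\rangle_p$ is non-negative, so the exponential formula for Poisson random measures, applied conditionally on $\mathbf{X}_0=\mathbf{x}$ using stationarity and independence of the increments of $L$, yields
\begin{align*}
\mathbb{E}\big[\E^{-\langle\mathbf{g},\mathbf{X}_t\rangle_p}\mid\mathbf{X}_0=\mathbf{x}\big]
&=\E^{-\langle\mathbf{g},\mathcal{S}_t\mathbf{x}\rangle_p}\exp\Big(-\!\int_0^t\langle\mathbf{g},\mathcal{S}_{t-u}\mathcal{E}\gamma_0\rangle_p\,\D u\Big)\\
&\quad\times\exp\Big(\int_0^t\!\!\int_{K\setminus\{0\}}\!\big(\E^{-\langle\mathbf{g},\mathcal{S}_{t-u}\mathcal{E}z\rangle_p}-1\big)\,\ell(\D z)\,\D u\Big),
\end{align*}
which is~\eqref{eq:Pt-exponential}. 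The only point needing care is finiteness of $\int_0^t\int_{\{0<\|z\|\le1\}}\langle\mathbf{g},\mathcal{S}_{t-u}\mathcal{E}z\rangle_p\,\ell(\D z)\,\D u$, which is precisely the Pettis-integrability bound already obtained in the proof of Proposition~\ref{prop:OU-mild-sol}.

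For part ii), fix $\mathbf{g}_1,\dots,\mathbf{g}_n\in(K^p)^*\cap D(\mathcal{A}^*)$. By~\eqref{eq:weak-solution-state-space} and the L\'evy--It\^o decomposition $\D L_s=\gamma\,\D s+\int_{\{0<\|z\|\le1\}}z\,\tilde N(\D s,\D z)+\int_{\{\|z\|>1\}}z\,N(\D s,\D z)$, the $\MR^n$-valued process $U_t:=\mathbf{u}(\mathbf{X}_t)$ is a semimartingale whose continuous finite-variation part has density $s\mapsto\big(\langle\mathcal{A}^*\mathbf{g}_i,\mathbf{X}_s\rangle_p+\langle\mathbf{g}_i,\mathcal{E}\gamma\rangle_p\big)_{i=1}^n$, whose jumps are $\bm{\xi}(\Delta L_t)$, and whose jump-compensator is the pushforward of $\ell(\D z)\,\D s$ under $z\mapsto\bm{\xi}(z)$; there is no continuous martingale part since $Q=0$. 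Applying the finite-dimensional It\^o formula to $\phi\in C_0^2(\MR^n)$ and then compensating the resulting Poisson integral, one obtains $f(\mathbf{X}_t)=f(\mathbf{X}_0)+\int_0^t(\mathcal{G}f)(\mathbf{X}_s)\,\D s+(\text{martingale})$ with $\mathcal{G}f$ exactly as in~\eqref{eq:generator}; the interchanges and the absolute convergence of the $\ell$-integral use that $\phi$ has bounded first and second derivatives and that $\int_{D_0\cap K}|\langle\mathbf{g}_i,\mathcal{E}z\rangle_p|\,\ell(\D z)=\int_{D_0\cap K}|\langle\mathcal{E}^*\mathbf{g}_i,z\rangle|\,\ell(\D z)<\infty$. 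Taking expectations gives $P_tf-f=\int_0^tP_s(\mathcal{G}f)\,\D s$, hence $f\in D(\mathcal{G})$ with the stated action.

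For part iii), existence of a solution to the martingale problem for $\mathcal{G}$ is the identity just proved. For uniqueness I would show that any solution $(\tilde{\mathbf{X}}_t)_{t\ge0}$ has the one-dimensional marginals of $(\mathbf{X}_t)_{t\ge0}$: passing (by truncation/approximation) from $C_0^2$ cylindrical functions to the exponentials $\mathbf{x}\mapsto\E^{-\langle\mathbf{g},\mathbf{x}\rangle_p}$ with $\mathbf{g}\in(K^p)^*\cap D(\mathcal{A}^*)$, the function $v(t):=\mathbb{E}[\E^{-\langle\mathbf{g},\tilde{\mathbf{X}}_t\rangle_p}]$ is seen to satisfy a linear integral equation whose unique solution is the right-hand side of~\eqref{eq:Pt-exponential}; since $K^*$ generates $B^*$, the family $\{\E^{-\langle\mathbf{g},\cdot\rangle_p}:\mathbf{g}\in(K^p)^*\}$ is measure-determining on $K^p$, and the Markov property of martingale-problem solutions upgrades equality of one-dimensional marginals to equality of all finite-dimensional distributions, i.e.\ uniqueness in law. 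Finally,~\eqref{eq:martingale-solution-linear} follows by specializing~\eqref{eq:weak-solution-state-space} to $\mathbf{g}=(0,\dots,g^{(i)},\dots,0)$, so that $\langle\mathcal{A}^*\mathbf{g},\mathbf{X}_s\rangle_p=\sum_{j=1}^p\langle\mathcal{A}_{ij}^*g^{(i)},X_s^j\rangle$ and $\langle\mathbf{g},\mathcal{E}\,\D L_s\rangle_p=\langle g^{(i)},(\mathcal{E}\,\D L_s)^i\rangle$, and then using the L\'evy--It\^o decomposition to peel off the drift $t\langle g^{(i)},(\mathcal{E}\gamma)^i\rangle$ and the large-jump compensator $t\int_{\{\|z\|>1\}}\langle g^{(i)},(\mathcal{E}z)^i\rangle\,\ell(\D z)$, collecting the remaining compensated Poisson integral over all of $K$ into $\langle g^{(i)},M_t^i\rangle$, which is purely discontinuous because $Q=0$.

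The step I expect to be the main obstacle is the uniqueness in iii): the natural domain of $\mathcal{G}$, built from $C_0^2$ cylindrical functions, does not literally contain the exponentials used in i), so the ``Laplace transform determines the law'' reasoning must be transferred carefully (by enlarging the domain or by localization), and one must justify that martingale-problem solutions here are Markov, which leans on abstract theory (e.g.\ Ethier--Kurtz) and on $K^p$ being Polish in the product-norm topology. I would also flag that for~\eqref{eq:martingale-solution-linear} to be a genuine drift-plus-martingale decomposition one implicitly needs $\int_{\{\|z\|>1\}}\|z\|\,\ell(\D z)<\infty$, i.e.\ integrability of $L$, which should be stated as a hypothesis.
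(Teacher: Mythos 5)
Your proposal follows the same core route as the paper's proof: part i) via the L\'evy--It\^o decomposition, the compact variation-of-constants form $\mathbf{X}_t=\mathcal{S}_t\mathbf{x}+\int_0^t\mathcal{S}_{t-s}\mathcal{E}\,\D L_s$, and the exponential formula for the Laplace functional of the stochastic convolution (the paper invokes the analogue of \cite[Corollary~4.29]{PZ07} where you apply the Poisson exponential formula directly); part iii)'s identity \eqref{eq:martingale-solution-linear} by inserting $(0,\dots,g^{(i)},\dots,0)$ into \eqref{eq:weak-solution-state-space}. Where you differ is in how much you actually prove: for ii) the paper only cites \cite[Theorem~5.4]{PZ07}, whereas you carry out the finite-dimensional It\^o argument for the semimartingale $\mathbf{u}(\mathbf{X}_t)$; and for the uniqueness claim in iii) the paper offers only the remark that a stochastically strong solution solves the martingale problem (which addresses existence, not uniqueness), whereas you supply a genuine uniqueness argument via measure-determining exponentials and the Markov property --- and you rightly flag that transferring from $C_0^2$ cylindrical test functions to exponentials is the delicate step there. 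Two further points in your favour: your version of \eqref{eq:Pt-exponential} is the correct one --- the integrand in the jump term should be $\E^{-\langle\mathbf{g},\mathcal{S}_{t-u}\mathcal{E}z\rangle_p}-1$ and the two factors should be multiplied, not added, so the printed formula contains a sign/typo error that your derivation silently corrects; and your observation that \eqref{eq:martingale-solution-linear} needs $\int_{\{\|z\|>1\}}\|z\|\,\ell(\D z)<\infty$ for the compensated integral over all of $K$ to define a martingale identifies a hypothesis the paper uses implicitly without stating. In short, the proposal is correct, matches the paper's strategy, and is more complete than the paper's own proof on parts ii) and iii).
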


\begin{proof}
The L\'evy-It\^o-decomposition of $L_t$ is given by
   \begin{align*}
        L_t=\gamma t+\int_0^t\int_{\set{z\in K\colon 0<\norm{z}\leq 1}}z\tilde{N}(\D s,\D z)+\int_0^t\int_{\set{z\in K\colon \norm{z}> 1}}z N(\D s,\D z),   
   \end{align*}
   and hence the variation-of-constant formula~\eqref{eq:variation-of-constant} can be compactly written as 
   \begin{align}\label{eq:variation-of-constant-compact}
     \mathbf{X}_t&=\cS_t\mathbf{x}+\int_{0}^{t}\cS_{t-s}\cB\D L_s,\quad t\geq 0.
   \end{align}
Now, let $\mathbf{g} \in (K^{p})^*$, then by~\eqref{eq:variation-of-constant-compact} 
\begin{align*}
\mathbb{E}\left[ e^{ -\langle \mathbf{g}, \mathbf{X}_t \rangle_p } \right] &= e^{ -\langle \mathbf{g}, \cS_t \mathbf{x} \rangle_p}\mathbb{E} \left[\exp\big(-  \int_0^t\langle \mathbf{g},\cS_{t-s}\cB \D L_s\rangle_p\big)\right],
\end{align*}
and similarly to~\cite[Corollary 4.29]{PZ07} it follows that
\begin{align*}
\mathbb{E}\left[\exp\big(-\int_0^t\langle \mathbf{g},\cS_{t-s}\cB \D L_s\rangle_p\big)\right]=\exp\Big(-\int_0^t \psi_{L}(\cB^{*}\cS^{*}_{t-s}\mathbf{g})\D s\Big),   
\end{align*}

where for every $f \in K^*$ we write $\psi_L(f)=\langle f,\gamma_0\rangle+\int_{K} \big( 1 - e^{-\langle f,z\rangle} \big) \, \ell(\D z)$
for the L\'evy characteristic exponent in~\eqref{eq:Laplace-Levy-Subordinator}, which yields the desired formula~\eqref{eq:Pt-exponential}.

The form of the generator $\cG$ follows by similar arguments as in the Hilbert space setting in~\cite[Theorem 5.4]{PZ07}. Since the solution $(\mathbf X_t)_{t\geq 0}$ is a stochastically strong solution, it is also the solution to the martingale problem of its generator. 

It is left to show that~\eqref{eq:martingale-solution-linear} holds. For $i=1,\ldots,p$,  consider $(0,\ldots,g^{(i)},\ldots,0)\in (K^{p})^*$ and insert into~\eqref{eq:weak-solution-state-space} to obtain:
\begin{align*}
     \langle g^{(i)}, X_t^{i} \rangle &= \langle g^{(i)}, X_0^{i} \rangle +t\langle
                                      g^{(i)}, (\cB\gamma)^{i} \rangle+\int_0^t
                                      \langle g^{(i)}, (\mathcal{A} \bm X_s)^{i}
                                      \rangle\D s \nonumber\\ &\quad+
                                                                \int_0^t\int_{\set{z
                                                                \in K\colon
                                                                0<\norm{z}\leq
                                                                1}} \langle
                                                                g^{(i)},
                                                                (\mathcal{E}z)^{i}\rangle
                                                                \tilde{N}(\D
                                                                s,\D
                                                                z)\nonumber\\
                                    &\quad+ \int_0^t\int_{\set{z\in K\colon
                                      \norm{z}> 1}} \langle g^{(i)},
                                      (\mathcal{E}z)^{i}\rangle N(\D s,\D z). 
\end{align*}
Using the block operator form $(\cA_{ij})_{i,j=1,\ldots,p}$ of $\cA$ we can write
\begin{align*}
 \langle g^{(i)}, (\mathcal{A} \mathbf{X}_s)^{i} \rangle = \sum_{j=1}^p \langle \mathcal{A}_{ij}^* g^{(i)}, X_s^{j} \rangle.   
\end{align*}
Thus, by defining $(M_t)_{t\geq 0}$ as the purely discontinuous martingale given by
\begin{align*}
 \langle g^{(i)}, M_t^{i} \rangle = \int_0^t \int_{K} \langle g^{(i)}, (\mathcal{E} z )^{i} \rangle \tilde{N}( \D s, \D z ),   
\end{align*}
we obtain the desired decomposition in~\eqref{eq:martingale-solution-linear}.
\end{proof}

\begin{remark}
  Proposition~\ref{prop:OU-mild-sol} offers one (under many possible) approaches for establishing the
existence of a mild (or weak) solution to the state-space
equation~\eqref{eq:state-space-X1}, which is particularly well-suited for our
cone-valued setting. If, for instance, the driving L\'evy process has finite
variation, it is also possible to construct a pathwise solution
to~\eqref{eq:state-space-X1}. In that scenario, the parameter constraints
required to preserve positivity (i.e., ensuring the state and output processes
remain in the cone) can be relaxed. In other words, if one can define a solution
more generally on the entire Banach space $B$, the need to maintain
cone-preserving parameter conditions can be dropped at this stage.
\end{remark}
\begin{remark}\label{rem:finite-dim-noise}
The separability of the Banach space plays a crucial role in the definition of Banach space–valued Lévy processes. Without separability, certain foundational properties, such as stochastic continuity, which relies on norm convergence, can break down. In particular, the norm difference
$\|L_t-L_s\|$
may fail to be measurable.
Furthermore, in order for the integrals on the right-hand side of \eqref{eq:variation-of-constant} to be well-defined, we require the strong measurability of the mapping
$s \mapsto \mathbf{1}_{\{s\leq t\}}\cS_{t-s} \cE z$.
This property is guaranteed in separable Banach spaces (see, e.g.~\cite[Theorem 1.1]{pettis1938integration}), but can fail in more general (non-separable) settings, thereby making the integral potentially ill-defined. 

However, we can still define Lévy subordinators in cones of (not necessarily separable) Banach spaces by considering a  finite-dimensional L\'evy subordinator and mapping it into the Banach space of interest. 

As an example, consider $\Phi\colon \mathbb{R}^d \to B$,  defined by 
\begin{equation}\label{eq:phi}
 \Phi(z) = \sum_{i=1}^d z^i b^i\,,
 \end{equation}
 where $b^1, \dots, b^d \in B$ are fixed elements of a (not necessarily separable) Banach space $B$. Let $(Z_k)_{k \in \mathbb{N}} \in \mathbb{R}^d_+$ be  i.i.d. random variables with distribution $\ell_0$ and $(N_t)_{t\geq 0}$ be a Poisson random variable in $\mathbb{R}$ with intensity $\varrho \in \mathbb{R}_+$ and independent of $(Z_k)_{k \in \mathbb{N}}$. We define a compound Poisson  process in $B$ as follows  
\begin{equation}\label{eq:levy-finite-noise}
L_t = \sum_{k=1}^{N(t)} \Phi(Z_k)\,, \qquad t\geq 0\,.
\end{equation}
 Its Lévy measure $\ell$ on $\mathrm{Bor}(B \setminus \{0\})$ is a pushforward of $\ell_0$ via the mapping $\Phi$, scaled by the intensity $\varrho$ 
$$
\ell(A) = \varrho \cdot \ell_0(\Phi^{-1}(A)),
$$
where $A \subset B \setminus \{0\}$ is a Borel set in $B$. In this case, it is easy to verify that $\int_{\{0<\|x\|\leq 1\}} |\langle f, x \rangle|\, \ell(\D x) < \infty$, for every $f \in B^*$. When we assume that $\Phi$ maps in a proper convex cone $K$ of the Banach space $B$, then $(L_t)_{t\geq 0}$ is a $K$-valued subordinator and for all $f \in K^*$, its Lévy-Khintchine representation is given by, 
$$
\mathbb{E} \left[ \exp(- \langle f, L_t \rangle) \right] = \exp \left( t \int_{\mathbb{R}^d_+ \setminus \{0\}} \left( 1-\e^{- \langle f, \Phi(z) \rangle}  \right) \varrho \, \ell_0(\D z) \right).
$$
Let $F\colon [0,T] \to \cL(B)$. Then the integral of $F$ with respect to $(L_t)_{t\geq 0}$ is pathwise defined by 
\[\int_0^t F(s) \, \D L_s = \sum_{\tau_k\leq t} F(\tau_k) \Delta L_{\tau_k} = \sum_{\tau_k\leq t} F(\tau_k) \Phi(Z_k)\,, \qquad t\geq 0\,,\]
where $\Delta L_{\tau_k}$ is the jump size at time $\tau_k$.
Therefore the claims of Proposition \ref{prop:OU-mild-sol} remain valid even without assuming the separability of the Banach space $B$, for the process $(\bm X_t)_{t\geq 0}$ defined in \eqref{eq:weak-solution-state-space}, when driven by a L\'evy subordinator with a finite-dimensional noise as specified in \eqref{eq:levy-finite-noise}. 
\end{remark}

\subsection{L\'evy-Driven Banach-Valued CARMA Processes}\label{sec:banach-valued-carma}
Let $K \subseteq B$ denote a proper convex cone and $(L_t)_{t \geq 0}$ a $K$-valued Lévy process in $B$
with characteristic triplet $(\gamma, 0, \ell)$ satisfying the conditions of
Theorem~\ref{thm:cone-subordinator}. Moreover, let us denote by $\zero$ the null operator in $\mathcal{L}(B)$. For $p \in \mathbb{N}$,
consider possibly unbounded linear operators 
\begin{align*}
  A_i \colon B \to B, 
  \quad i = 1, \dots, p,
  \end{align*}
each with domain $D(A_i) \subseteq B$ being dense. Furthermore, let 
\begin{align*}
  \mathbb{I} \colon B \to B
\end{align*}
be a (possibly unbounded) linear operator with dense domain $D(\mathbb{I})
\subseteq B$. We then define the \emph{state transition operator}
$\mathcal{A}_p \colon D(\mathcal{A}_p) \subset B^p \to B^p$ by the block operator matrix
\begin{align}\label{eq:Ap-Banach}
  \mathcal{A}_p \;\coloneqq\;
  \begin{bmatrix}
    \zero & \mathbb{I} & \zero & \cdots & \zero \\
    \zero & \zero & \mathbb{I} & \ddots & \vdots \\
    \vdots & \vdots & \ddots & \ddots & \zero \\
    \zero & \cdots & \cdots & \zero & \mathbb{I} \\
    A_p & A_{p-1} & \cdots & \cdots & A_1
  \end{bmatrix}.
\end{align}
In the context of linear state-space models, this operator is often referred to
as the \emph{companion block operator matrix} of the associated operator polynomial:
\begin{align}
  \label{eq:P}
  \opP(\lambda)\df
   \mathbb{I} \lambda^{p}-A_{1}\lambda^{p-1}-A_{2}\lambda^{p-2}-\ldots-A_{p},\quad \lambda\in\MC.
\end{align}

Furthermore, for some $\sE \in \mathcal{L}(B)$, we define the \emph{input operator}
$\mathcal{E}_p \in \mathcal{L}(B, B^p)$ by 
\begin{align}\label{eq:Bp-Banach}
  \mathcal{E}_p(x) 
  \;\coloneqq\; (\,0, \dots, 0,\, \sE x\,)^{\intercal}
  \;\in\; B^p,
  \quad \text{for all } x \in B.
\end{align}
Thus, $\mathcal{E}_p$ injects an element $x \in B$ transformed by the operator $E$ into the last coordinate of a vector in $B^p$.
 
Next, let $q \in \mathbb{N}_0$ with $q < p$, and let 
\begin{align*}
  C_i \in \cL(B), 
  \quad i = 0, \dots, p-1,
\end{align*}
be such that $C_i = \zero$ for $i = q+1, \dots, p-1$. We define the \emph{output
operator}  $ \mathcal{C}_q \colon B^p \to B$ by
\begin{align}\label{eq:Cq-Banach}
  \mathcal{C}_q \bigl(x^1, \ldots, x^p\bigr) 
  \;\coloneqq\; \sum_{i=1}^{q+1} C_{i-1}\, x^i.
\end{align}
In other words, $\mathcal{C}_q$ acts on the first $q+1$ coordinates of a vector
in $B^p$ via the operators $C_0, \dots, C_q$. Likewise, the operator polynomial associated with the output operator $\mathcal{C}_q$ is
\begin{align}
  \label{eq:Q}
  \opQ(\lambda)\df C_{0}+C_{1}\lambda+C_{2}\lambda^{2}+\ldots+C_{q}\lambda^{q},\quad \lambda\in\MC. 
\end{align}

\begin{definition}\label{def:carma-process1}
  Let $p, q \in \mathbb{N}_0$ with $q < p$. A \emph{pure-jump L\'evy-driven
CARMA$(p,q)$ process} in the Banach space $B$ is defined as the
output process $(Y_t)_{t \ge 0}$ of the continuous-time linear state-space model
(see Definition~\ref{def:linear-state-space-model-Banach}), driven by a L\'evy
subordinator $(L_t)_{t \ge 0}$ and governed by the parameter set $(\mathcal{A}_p, \mathcal{E}_p, \mathcal{C}_q, L)$. Concretely,
  \begin{align}\label{eq:CARMA-banach1}
  \begin{cases}
    \mathrm{d}\mathbf{X}_t = \mathcal{A}_p\,\mathbf{X}_t \,\mathrm{d}t + \mathcal{E}_p \,\mathrm{d}L_t, \\[6pt]
    Y_t = \mathcal{C}_q\,\mathbf{X}_t,
  \end{cases}
  \end{align}
  where $\mathbf{X}_0 = \mathbf{x} \in B^p$, and the operators $\mathcal{A}_p$, $\mathcal{E}_p$, and $\mathcal{C}_q$ are specified in \eqref{eq:Ap-Banach}, \eqref{eq:Bp-Banach}, and \eqref{eq:Cq-Banach}, respectively.
\end{definition}

Proposition~\ref{prop:OU-mild-sol} above  yields sufficient conditions for the
existence of a CARMA$(p,q)$  process in general separable Banach spaces. Indeed,
let $K$ be a proper convex cone with generating dual cone $K^{*}$. If  $\mathcal{A}_{p}$ generates a strongly continuous positive operator semigroup $(\mathcal{S}_t)_{t \geq 0}$ on $B^p$, i.e.,
$\mathcal{S}_t(K^p)\subseteq K^p$ for every $t \ge 0$, and if
$\mathcal{E}_{p} \in \mathcal{L}(B, B^p)$ as input operator satisfies
$\mathcal{E}_p(K)\subseteq K^p$, then the CARMA$(p,q)$  state-space process
$(\mathbf{X})_{t\geq 0}$ exists and assumes values in $K^{p}$ for any $\mathbf{X}_{0}=\mathbf{x}\in K^p$. In the next proposition, we shed some light on conditions for the tuple $(\mathcal{A}_p, \mathcal{E}_p, \mathcal{C}_q, L)$ to allow for cone-valued CARMA specifications.

\begin{proposition}\label{prop:pos-necessary-car}
  Let $(\mathcal{A}_p, \mathcal{E}_p, \mathcal{C}_q, L)$ be as in
\eqref{eq:Ap-Banach}, \eqref{eq:Bp-Banach}, and \eqref{eq:Cq-Banach} above and assume that $\sE(K)\subseteq K$. Moreover, let $(L_{t})_{t \geq 0}$ be a $K$-valued L\'evy process. Then the following statements hold:
  \begin{enumerate}
  \item[i)]\label{item:pos-necessary-car-1}
    If $C_{j}(K)\subseteq K$ for all $j \in \{0,\ldots,q\}$ and
    $\mathcal{A}_p$ is quasi-monotone with respect to $K^{p}$, then
    the associated CARMA$(p,q)$ process $(Y_t)_{t \ge 0}$ exists and  remains in $K$ for all initial values $\mathbf{X}_{0} \in K^{p}$.
  \item[ii)]\label{item:pos-necessary-car-2}
    Let $J\subseteq \set{1,\ldots,p}$. Define the product cone $K^{J,p} \in B^p$ as $(K^{J,p})_{i}=K$ for all $i\in J$ and $(K^{J,p})_{i}=B$ otherwise.  If $J\subseteq \set{1,\ldots,p}$ and the operator $\cS_{t}(K^{J,p})\subseteq  K^{J,p}$ for all $t\geq 0$ then $J = \{1, 2,\ldots, p\}$. 
  \item[iii)]\label{item:pos-necessary-car-3}
    Conversely, if $(Y_t)_{t \geq 0}$ is $K$-valued for all initial values
    $\mathbf{X}_{0} \in K^p$ and for every $K$-valued L\'evy process
    $(L_t)_{t \geq 0}$, then $C_j \in \pi(K)$ for all $j \in \{0,\ldots,q\}$.
    Moreover, if $K = C_j^{-1}(K)$ holds for all $j \in \{0,\ldots,q\}$, then
    $\mathcal{A}_p$ must be quasi-monotone with respect to $K^{p}$.
  \end{enumerate}
\end{proposition}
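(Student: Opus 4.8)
The plan is to treat the three assertions separately: i) and the first half of iii) follow quickly from Proposition~\ref{prop:OU-mild-sol} and an evaluation at $t=0$, while ii) and the second half of iii) rest on the companion structure of $\mathcal{A}_p$. For i): since $\mathcal{A}_p$ is quasi-monotone with respect to $K^p$ and (by the standing assumption) generates the strongly continuous semigroup $(\mathcal{S}_t)_{t\ge0}$, \cite[Theorem~1]{Lem98} gives $\mathcal{S}_t(K^p)\subseteq K^p$ for all $t$; moreover $\mathcal{E}_p x=(0,\dots,0,\sE x)$ and $\sE(K)\subseteq K$ yield $\mathcal{E}_p(K)\subseteq K^p$, so Proposition~\ref{prop:OU-mild-sol} produces a $K^p$-valued state process for every $\mathbf{X}_0\in K^p$; finally $Y_t=\mathcal{C}_q\mathbf{X}_t=\sum_{i=1}^{q+1}C_{i-1}X_t^i$ lies in $K$ because each $C_{i-1}(K)\subseteq K$ and $K$ is a convex cone.

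\textbf{Part ii).} Assume $J\neq\emptyset$ (the hypothesis is vacuously satisfied for $J=\emptyset$, where $K^{J,p}=B^p$, so that case must be excluded). The engine is differentiation of the orbit at $0$: for $\mathbf{x}\in D(\mathcal{A}_p)$ one has $(\mathcal{S}_t\mathbf{x})^i=x^i+t(\mathcal{A}_p\mathbf{x})^i+o(t)$ as $t\downarrow0$. Step~1: $J$ is \emph{upward closed}, i.e.\ $i\in J$ and $i<p$ imply $i+1\in J$. Indeed, if $i+1\notin J$, pick $w$ in the dense domain $D(A_{p-i})$ and take $\mathbf{x}$ with $(i+1)$-st coordinate $w$ and all others zero; then $\mathbf{x}\in K^{J,p}$ and $x^i=0$, while $(\mathcal{A}_p\mathbf{x})^i=\mathbb{I}w$, so $t^{-1}(\mathcal{S}_t\mathbf{x})^i\to\mathbb{I}w\in K$ (closedness of $K$), and replacing $w$ by $-w$ forces $\mathbb{I}w\in K\cap(-K)=\{0\}$ on a dense set, contradicting $\mathbb{I}\neq0$. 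Hence $J=\{k,\dots,p\}$ for some $k$, in particular $p\in J$. Step~2: rule out $k\ge2$. Take $\mathbf{x}$ with first coordinate $w\in D(A_p)$ and all others zero; then $\mathbf{x}\in K^{J,p}$ (coordinate $1\notin J$), $x^p=0$, and $(\mathcal{A}_p\mathbf{x})^p=A_p w$, so positivity of $(\mathcal{S}_t\mathbf{x})^p$ forces $A_p w\in K$ for all $w\in D(A_p)$, hence $A_p=0$; ruling this out requires the non-degeneracy assumption $A_p\neq0$ on $\opP$. Therefore $k=1$ and $J=\{1,\dots,p\}$. (Alternatively one can invoke the converse of \cite[Theorem~1]{Lem98} --- cone-invariance of $(\mathcal{S}_t)$ forces $\mathcal{A}_p$ to be quasi-monotone with respect to $K^{J,p}$ --- and test the quasi-monotonicity inequality against functionals concentrated in one coordinate.)

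\textbf{Part iii).} First claim: evaluating at $t=0$, $Y_0=\mathcal{C}_q\mathbf{x}=\sum_{i=1}^{q+1}C_{i-1}x^i$, so choosing $\mathbf{x}\in K^p$ with $x^{j+1}=v\in K$ and all other coordinates zero gives $C_j v\in K$ for every $v\in K$, i.e.\ $C_j\in\pi(K)$. Second claim: specialise to the admissible degenerate subordinator $L\equiv0$, so $\mathbf{X}_t=\mathcal{S}_t\mathbf{x}$ and the hypothesis becomes $\mathcal{C}_q\mathcal{S}_t\mathbf{x}\in K$ for all $t\ge0$ and $\mathbf{x}\in K^p$. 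Using $C_j^{-1}(K)=K$ for $0\le j\le q$ together with the companion structure --- the first coordinate of $\mathcal{S}_t\mathbf{x}$ solves $\opP(\mathrm{D})u=0$ with $u^{(i-1)}(0)=x^i$ and $X_t^i=u^{(i-1)}(t)$ --- and letting $\mathbf{x}$ range over all of $K^p$, the plan is to upgrade the ``observed'' positivity of $\mathcal{C}_q\mathcal{S}_t\mathbf{x}$ to positivity of the whole orbit, $\mathcal{S}_t(K^p)\subseteq K^p$ for all $t$; quasi-monotonicity of $\mathcal{A}_p$ with respect to $K^p$ then follows by differentiating $t^{-1}\langle\mathbf{f},\mathcal{S}_t\mathbf{z}\rangle_p\ge0$ at $t=0$ for $\mathbf{z}\in D(\mathcal{A}_p)\cap K^p$ and $\mathbf{f}\in(K^*)^p$ with $\langle\mathbf{f},\mathbf{z}\rangle_p=0$, exactly as in Part~ii).

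\textbf{Main obstacle.} Parts i) and the first half of iii) are routine. In ii) the delicate point is the exclusion of proper non-trivial $J$; this genuinely needs $A_p\neq0$ (if it fails, the last few $A_j$ may vanish and a strictly smaller up-set $J=\{k,\dots,p\}$ can be invariant), so I would state this non-degeneracy explicitly. The hardest step is the second claim of iii): since $\mathcal{C}_q$ observes only the first $q+1$ coordinates, passing from positivity of $Y$ back to positivity of the full state $\mathbf{X}$ is the crux, and this is precisely where the injectivity hypotheses $C_j^{-1}(K)=K$ and the freedom in $\mathbf{X}_0$ (and, if needed, in the driving subordinator) must be exploited to their fullest; I expect this observability-to-positivity passage to be the technical heart of the argument, and the place where any further hypotheses on the $C_j$ or on $\opP$ that turn out to be necessary would enter.
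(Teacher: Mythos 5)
Your part i) coincides with the paper's argument: quasi-monotonicity together with \cite[Theorem 1]{Lem98} gives $\mathcal{S}_t(K^p)\subseteq K^p$, the hypothesis $\sE(K)\subseteq K$ gives $\mathcal{E}_p(K)\subseteq K^p$, the variation-of-constants formula keeps the state in $K^p$, and positivity of the $C_j$ pushes this through $\mathcal{C}_q$ to $Y_t\in K$. For parts ii) and iii) the paper supplies no argument at all, deferring to \cite[Lemma 3.13 ii) and iii)]{BK23}, so your attempt to write one out is where the two texts diverge. Your part ii) --- differentiating the orbit at $t=0$ against initial data concentrated in a single coordinate outside $J$ --- is sound and is essentially the infinite-dimensional version of the cited finite-dimensional argument; your observations that $J=\emptyset$ must be excluded and that eliminating $J=\{k,\dots,p\}$ with $k\geq 2$ genuinely requires a non-degeneracy condition such as $A_p\neq 0$ are correct and identify hypotheses the statement leaves implicit. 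The first claim of iii) (evaluation at $t=0$ on coordinate vectors) is also fine.

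The genuine gap is the second claim of iii). You correctly isolate the crux --- passing from positivity of the observed output $\mathcal{C}_q\mathcal{S}_t\mathbf{x}=\sum_{i=1}^{q+1}C_{i-1}(\mathcal{S}_t\mathbf{x})^i$ for all $\mathbf{x}\in K^p$ and all $t$ to invariance of the full state cone, $\mathcal{S}_t(K^p)\subseteq K^p$ --- but you only announce a plan for it. The hypothesis $C_j^{-1}(K)=K$ controls one operator applied to one coordinate; it does not by itself allow you to split a sum lying in $K$ into summands lying in $K$, and it says nothing about the coordinates $i>q+1$ that $\mathcal{C}_q$ never sees (note $q<p$, so for a CAR-type output with $q=0$ only the first coordinate is observed). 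Some additional device is required, e.g.\ using the companion structure to realise the unobserved coordinates as short-time limits of observed quantities, or varying $\mathbf{X}_0$ coordinate by coordinate as in \cite[Lemma 3.13 iii)]{BK23}. As written, this observability-to-positivity step is asserted rather than proved, so your iii) is incomplete; the concluding deduction of quasi-monotonicity from $\mathcal{S}_t(K^p)\subseteq K^p$ by differentiating $\langle\mathbf{f},\mathcal{S}_t\mathbf{z}\rangle_p$ at $t=0$ is fine once that invariance has actually been established.
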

\begin{proof}
i) Suppose that $\mathcal{A}_p$ is quasi-monotone with respect to $K^p$. By definition, the semigroup $(\mathcal{S}_t)_{t\ge 0}$ generated by $\mathcal{A}_p$ satisfies 
\begin{align*}
\mathcal{S}_t\,\mathbf{x} \in K^p \quad \text{for all } \mathbf{x} \in K^p \text{ and } t \geq 0.
\end{align*}
Thus, for any initial condition $\mathbf{X}_0 \in K^p$, we have 
\begin{align*}
\mathcal{S}_{t-s}\,\mathbf{X}_0 \in K^p \quad \text{for all } t \ge s.
\end{align*}
If, in addition, $L_t$ is a $K$-increasing Lévy process, then 
\begin{align*}
\mathcal{E}_p\big(L_s - L_{s'}\big) \in K^p \quad \text{for all } s > s' \ge 0.
\end{align*}
It follows that the stochastic convolution
\begin{align*}
\int_{0}^{t}\mathcal{S}_{t-s}\,\mathcal{E}_p\,\mathrm{d}L_s 
\end{align*}
takes values in $K^p$ for all $t \ge 0$. Moreover, if $C_j \in \pi(K)$ for all $j \in \{0,\ldots,q\}$, then applying that the output operator $\mathcal{C}_q$ preserves positivity, it follows,
\begin{align*}
\mathcal{C}_q\big(\mathcal{S}_{t-s}\,\mathbf{X}_0\big) \in K \quad \text{and} \quad \mathcal{C}_q\Big(\int_{0}^{t}\mathcal{S}_{t-s}\,\mathcal{E}_p\,\mathrm{d}L_s\Big) \in K.
\end{align*}
By the variation-of-constant formula, we conclude that the output process $Y_t =
\mathcal{C}_q \mathbf{X}_t$ remains in $K$ for all $t \ge 0$. Part ii) and iii) follow from arguments analogous to those in \cite[Lemma 3.13 ii) and iii)]{BK23}.
\end{proof}

Part (ii) of Proposition~\ref{item:pos-necessary-car-2} demonstrates that the
specific form of the transition operator $\mathcal{A}_p$ ensures
quasi-monotonicity only with respect to the full product cone $K^p$, which is the
only proper product cone in this context. Part iii) tells us that, if the output
block operators $C_{j}$ for $j=0,1,\ldots,q$ are  invertible, then
quasi-monotonicity of $\cA_{p}$ is also necessary for the cone-invariance of the CARMA$(p,q)$ process. 

In the following proposition, we derive explicit formulas for the first- and second-moment structure of a CARMA$(p,q)$ process in Banach spaces.

\begin{proposition}\label{prop:moments-1}
Let $(Y_{t})_{t \geq 0}$ be a CARMA$(p, q)$, with $p > q$, process as in \eqref{eq:CARMA-banach1} driven by a pure-jump L\'evy process with a L\'evy measure satisfying 
$$\int_{\{z  \in K\colon \|z\|> 1\}} |\langle g, z\rangle| \,\ell(d z)< \infty\,,\qquad \forall g \in B^*.$$
Let $g \in B^*$ and $(\mathcal{S}_t^*)_{t \geq 0}$ be the adjoint semigroup generated by $\mathcal{A}_p^*$. Then, for all $t \geq 0$, it holds that
\begin{align*}
\mathbb{E}[\langle g, Y_t \rangle] &= \langle \mathcal{S}_t^* \mathcal{C}_q^* g, \bm x \rangle_p+\int_0^t \langle \mathcal{E}_p^*S^*_{t-s} \cC^*_qg, \gamma_0\rangle+\int_{K} \langle \mathcal{E}_p^* \mathcal{S}_{t-s}^* \mathcal{C}_q^* g, z \rangle \, \ell(\D z)\D s\,.
\end{align*}
Moreover, if the L\'evy measure satisfies
$$\int_{\{z  \in K\colon \|z\|> 1\}} |\langle g, z\rangle|^2 \,\ell(d z)< \infty\,,\qquad \forall g \in B^*,$$
it holds that 
\begin{align*}
\mathrm{Var}[\langle g, Y_t \rangle] &=
\int_{0}^{t}\int_{K} \langle \mathcal{E}_p^* \mathcal{S}_{t-s}^* \mathcal{C}_q^* g, z \rangle^2 \, \ell(\D z)\D s.
\end{align*}
\end{proposition}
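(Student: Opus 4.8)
The plan is to reduce both identities to scalar computations built on the variation-of-constants representation \eqref{eq:variation-of-constant} of the state process $(\mathbf X_t)_{t\ge 0}$ (with input operator $\mathcal E_p$ and semigroup $(\mathcal S_t)_{t\ge0}$ generated by $\mathcal A_p$), together with $Y_t=\mathcal C_q\mathbf X_t$. For a fixed $g\in B^*$ I would put $h(s)\df\mathcal E_p^*\mathcal S_{t-s}^*\mathcal C_q^*g\in B^*$ and apply the bounded functional $\mathcal C_q^*g\in(B^p)^*$ to \eqref{eq:variation-of-constant}; since bounded functionals commute with the Pettis-type stochastic integrals occurring there, this gives $\mathbb P$-a.s.
\begin{align*}
\langle g,Y_t\rangle &= \langle \mathcal S_t^*\mathcal C_q^*g,\mathbf x\rangle_p+\int_0^t\langle h(s),\gamma\rangle\,\D s+\int_0^t\!\!\int_{D_0\cap K}\!\langle h(s),z\rangle\,\tilde N(\D s,\D z)\\
&\qquad +\int_0^t\!\!\int_{\{z\in K\colon\|z\|>1\}}\!\langle h(s),z\rangle\,N(\D s,\D z).
\end{align*}
Local boundedness of $(\mathcal S_t)_{t\ge0}$ on $[0,t]$ yields $\sup_{s\in[0,t]}\|h(s)\|<\infty$, and $s\mapsto\langle h(s),z\rangle$ is continuous for each fixed $z$, so every term is well defined.

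For the first moment I would take expectations termwise. The first two terms are deterministic; the compensated small-jump integral has zero mean once its integrand is seen to lie in $L^1(\ell(\D z)\D s)$ over $(D_0\cap K)\times[0,t]$, which follows from \eqref{eq:pettis-centering-1} (valid because $\ell$ satisfies Theorem~\ref{pettis}) strengthened to a bound $\int_{D_0}|\langle f,z\rangle|\,\ell(\D z)\le C\|f\|$ via the closed graph theorem, so the double integral is at most $C\,t\sup_s\|h(s)\|$; the large-jump integral contributes $\int_0^t\!\int_{\{\|z\|>1\}}\langle h(s),z\rangle\,\ell(\D z)\,\D s$ by Campbell's formula, using the standing assumption $\int_{\{\|z\|>1\}}|\langle g,z\rangle|\,\ell(\D z)<\infty$ (again upgraded to a uniform bound). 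Finally I would substitute $\langle h(s),\gamma\rangle=\langle h(s),\gamma_0\rangle+\int_{D_0\cap K}\langle h(s),z\rangle\,\ell(\D z)$, which holds since $\langle f,\gamma_0\rangle=\langle f,\gamma\rangle-\int_{D_0}\langle f,z\rangle\,\ell(\D z)$ and $\ell$ is concentrated on $K$; the two $\ell$-integrals then recombine into a single one over $K$, producing the claimed formula for $\mathbb E[\langle g,Y_t\rangle]$.

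For the variance, under the extra hypothesis $\int_{\{\|z\|>1\}}|\langle g,z\rangle|^2\,\ell(\D z)<\infty$ I would recentre the large-jump integral, $\int_0^t\!\int_{\{\|z\|>1\}}\langle h(s),z\rangle\,N=\int_0^t\!\int_{\{\|z\|>1\}}\langle h(s),z\rangle\,\tilde N+(\text{deterministic})$, and merge it with the small-jump compensated integral. Because $N$ restricted to the disjoint regions $D_0\cap K$ and $\{\|z\|>1\}$ yields independent contributions and all other terms are deterministic, the only random part of $\langle g,Y_t\rangle$ is $\int_0^t\!\int_K\langle h(s),z\rangle\,\tilde N(\D s,\D z)$; its variance I would compute by the It\^o isometry for compensated Poisson integrals, obtaining $\int_0^t\!\int_K\langle h(s),z\rangle^2\,\ell(\D z)\,\D s$, finite because the triplet condition $\int_{D_0}|\langle f,z\rangle|^2\,\ell(\D z)<\infty$ controls the small jumps and the new hypothesis the tail, both uniformly in $s$. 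Substituting $h(s)=\mathcal E_p^*\mathcal S_{t-s}^*\mathcal C_q^*g$ gives the stated expressions.

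The step I expect to be the real work is not conceptual but the integrability bookkeeping: one must verify that $(s,z)\mapsto\langle h(s),z\rangle$ belongs to $L^1$, respectively $L^2$, of $\ell(\D z)\,\D s$ on $K\times[0,t]$. Since only the weak conditions $\int_{D_0}|\langle f,z\rangle|\,\ell(\D z)<\infty$ near the origin and the imposed tail bounds at infinity are available, the crude estimate $|\langle h(s),z\rangle|\le\mathrm{const}\cdot\|z\|$ is insufficient; instead one needs the uniform-in-$s$ bounds on $\int_K|\langle h(s),z\rangle|\,\ell(\D z)$ and $\int_K\langle h(s),z\rangle^2\,\ell(\D z)$ obtained from the closed graph theorem combined with local boundedness of the semigroup. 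Once this is in place, Campbell's formula and the Poisson It\^o isometry close the argument.
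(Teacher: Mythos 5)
Your argument is correct, but it follows a genuinely different route from the paper's. The paper proves both identities by computing the Laplace transform $U(\theta)=\mathbb{E}[\mathrm{e}^{-\theta\langle g,Y_t\rangle}]$ from the exponential formula of Proposition~\ref{prop:transition-semigroup} and then differentiating once, respectively twice, at $\theta=0$; the mean and variance drop out as the first two cumulants. You instead work directly on the L\'evy--It\^o level: you push the functional $\mathcal{C}_q^*g$ through the variation-of-constants formula \eqref{eq:variation-of-constant}, take expectations termwise using Campbell's formula for the large jumps and the zero-mean property of the compensated small-jump integral, recombine the drift via $\langle h(s),\gamma\rangle=\langle h(s),\gamma_0\rangle+\int_{D_0\cap K}\langle h(s),z\rangle\,\ell(\D z)$, and get the variance from the It\^o isometry after recentring the large jumps. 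The two routes need the same moment hypotheses, but they distribute the work differently: the paper's cumulant computation is shorter on the page yet silently requires justifying differentiation under the integral sign at $\theta=0$, which is exactly the $L^1$/$L^2$ integrability of $(s,z)\mapsto\langle h(s),z\rangle$ that you establish explicitly (via the closed-graph upgrade of the pointwise conditions to norm bounds, uniform in $s$ by local boundedness of the semigroup). Your version is therefore more elementary and arguably more complete on the analytic bookkeeping; the paper's version generalises more readily to higher moments and to the Fourier-type functionals used later in Section~\ref{sec:appication-expectation}. Both yield the same final expressions.
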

\begin{proof}
Let $g \in B^*$ and consider the Laplace transform of $\langle g, Y_t \rangle$:
\begin{align*}
U(\theta) = \mathbb{E}\left[ \mathrm{e}^{ - \theta \langle g, Y_t \rangle } \right], \quad \theta \geq 0.    
\end{align*}
Using the expression for the Laplace transform of $(X_t)_{t\geq 0}$ (see Proposition~\ref{prop:transition-semigroup}), we have

\begin{align*}
U(\theta) &= \mathbb{E}\left[ \mathrm{e}^{ - \theta \langle \mathcal{C}^*_{q} g, \mathbf{X}_t \rangle_p } \right]\\
&=\exp\left\{ - \theta \langle \mathcal{S}_t^* \mathcal{C}_q^* g, \bm{x} \rangle_p - \int_0^t \theta \langle \mathcal{E}_p^*\cS^*_{t-s} \cC^*_qg, \gamma_0\rangle\, \D s\right. \\
&\qquad \quad +\left. \int_0^t \int_{K} \left( \mathrm{e}^{ - \theta \langle \mathcal{E}_p^* \mathcal{S}_{t - s}^* \mathcal{C}_q^* g, z \rangle } - 1 \right) \, \ell(\D z) \D s \right\}.
\end{align*}
Differentiating $U(\theta)$ with respect to $\theta$, we obtain
\begin{align*}
\frac{\D}{\D \theta} U(\theta) &= - U(\theta) \left[ \langle \mathcal{S}_t^* \mathcal{C}_q^* g, \bm{x} \rangle_p +\int_0^t \langle \mathcal{E}_p^*\
\cS^*_{t-s} \cC^*_qg, \gamma_0\rangle\, ds \right.\\
&\qquad \quad + \int_{0}^{t}\left.\int_{K} \langle \mathcal{E}_p^* \mathcal{S}_{t-s}^* \mathcal{C}_q^* g, z \rangle \, \mathrm{e}^{ - \theta \langle \mathcal{E}_p^* \mathcal{S}_{t-s}^* \mathcal{C}_q^* g, z \rangle } \, \ell(\D z)\D s \right].    
\end{align*}
Evaluating at $\theta = 0$ (since $U(0) = 1$), we get
\begin{align*}
 \frac{\D}{\D \theta} U(\theta) \bigg|_{\theta = 0} &= - \left[ \langle \mathcal{S}_t^* \mathcal{C}_q^* g, \bm{x}\rangle_p + \int_0^t \langle \mathcal{E}_p^*\cS^*_{t-s} \cC^*_qg, \gamma_0\rangle\, ds \right.\\
 &\qquad \quad \left.+\int_{0}^{t}\int_{K} \langle \mathcal{E}_p^* \mathcal{S}_{t-s}^{*} \mathcal{C}_q^* g, z \rangle \, \ell(\D z)\D
  s\right].  
\end{align*}
It follows, noting the assumptions of the proposition,
\begin{align*}
\mathbb{E}[\langle g, Y_t \rangle] &= - \frac{\D}{\D \theta} U(\theta) \bigg|_{\theta = 0} \\
&= \langle \mathcal{S}_t^* \mathcal{C}_q^* g, \bm x \rangle_p +\int_0^t \langle \mathcal{E}_p^*S^*_{t-s} \cC^*_qg, \gamma_0\rangle\, + \int_{K} \langle \mathcal{E}_p^* \mathcal{S}_{t-s}^* \mathcal{C}_q^* g, z \rangle \, \ell(\D z)\D s.
 \end{align*}

For the second moment, we compute the second derivative:
\begin{align*}
\frac{\D^2}{\D \theta^2} U(\theta) &= U(\theta) \left\{ \left[ \langle \mathcal{S}_t^* \mathcal{C}_q^* g, \bm x \rangle_p +\int_0^t \langle \mathcal{E}_p^*\cS^*_{t-s} \cC^*_qg, \gamma_0\rangle\, ds \right.\right.\\
&\qquad + \left.\int_{0}^{t}\int_{K} \langle \mathcal{E}_p^* \mathcal{S}_t^* \mathcal{C}_q^* g, z \rangle \, \mathrm{e}^{ - \theta \langle \mathcal{E}_p^* \mathcal{S}_{t-s}^* \mathcal{C}_q^* g, z \rangle } \, \ell(\D z)\D s \right]^2  \\
&\qquad \left. + \int_{0}^{t}\int_{K} \langle \mathcal{E}_p^* \mathcal{S}_{t-s}^* \mathcal{C}_q^* g, z \rangle^2 \, \mathrm{e}^{ - \theta \langle \mathcal{E}_p^* \mathcal{S}_{t-s}^* \mathcal{C}_q^* g, z \rangle } \, \ell(\D z)\D s \right\}.
\end{align*}
Evaluating at $\theta = 0$, we have
\begin{align*}
&\frac{\D^2}{\D \theta^2} U(\theta) \bigg|_{\theta = 0} \\
&\qquad = \left( \langle \mathcal{S}_t^* \mathcal{C}_q^* g, \bm x \rangle_p + \int_0^t \langle \mathcal{E}_p^*\cS^*_{t-s} \cC^*_qg, \gamma_0\rangle\, ds+\int_{K} \langle \mathcal{E}_p^* \mathcal{S}_{t-s}^* \mathcal{C}_q^* g, z \rangle \, \ell(\D z)\D s \right)^2 \\ 
&\qquad \quad + \int_{0}^{t}\int_{K} \langle \mathcal{E}_p^* \mathcal{S}_{t-s}^* \mathcal{C}_q^* g, z \rangle^2 \, \ell(\D z)\D s.    
\end{align*}
Observe that due to the assumptions of the proposition, it holds 
\begin{align*}
\mathbb{E}[\langle g, Y_t \rangle^2] = \frac{\D^2}{\D \theta^2} U(\theta) \bigg|_{\theta = 0}. 
\end{align*}
Hence the expression for the variance follows.
\end{proof}

\subsection{Stationary CARMA Processes on Cones in Banach Spaces}

In this section, we discuss the stationarity of Banach-valued state and output
processes with input parameters of CARMA type. To obtain stationary
representations we first extend Lévy processes to the full real line: Consider a $B$-valued Lévy process 
$(L_t^{(1)})_{t \geq 0}$ defined on the positive real line $\mathbb{R}_+$ and let $(L_t^{(2)})_{t \geq 0}$ be an independent and identically distributed $B$-valued Lévy
process. A \emph{two-sided $B$-valued Lévy process} $(L_t)_{t \in \mathbb{R}}$
can then be defined as
\begin{align}\label{eq:two-sided-Levy}
L_t := \one_{\mathbb{R}_+}(t) L_t^{(1)} - \one_{\mathbb{R}_-}(t) L_{-t-}^{(2)}, 
\end{align}
where $L_{t-}^{(2)} := \lim_{s \nearrow t} L_s^{(2)}$ for all $t \geq 0$, and
$\mathbb{R}_- := -\mathbb{R}_+ \setminus \{0\}$.  Note that the two sided L\'evy process
in~\eqref{eq:two-sided-Levy} is $K$-valued, whenever the characteristic triplet $(\gamma,
0,\ell)$ of $(L^{(1)}_t)_{t\geq 0}$ satisfies the conditions of Theorem~\ref{thm:cone-subordinator}.
 
For any linear operator $A$, we denote its spectrum by $\sigma(A)$. Moreover, the spectral bound of $A$, denoted by $\tau(A)$, is defined as 
\begin{align}\label{eq:spectral-bound}
\tau(A) \df \sup\{\Re(\lambda) \colon \lambda \in \sigma(A)\}.    
\end{align}
\begin{definition}[cf.\ \cite{HL98}]
Define the set of \emph{positive operators} $\pi(K) \subseteq \mathcal{L}(B)$
with respect to the cone $K$ by
\begin{align*}
\pi(K) \;\df\; \Big\{ A \in \mathcal{L}(B) : A(u) \;\geq_{K}\; 0 \;\;\text{for all}\;\; u \geq_{K} 0 \Big\}.
\end{align*}
We denote by `$\preceq$' the partial order on $\mathcal{L}(B)$ induced by
$\pi(K)$.
\end{definition}

From the variation-of-constants formula~\eqref{eq:variation-of-constant}, we observe that $Y_{t} \in K$ for every $t \in \mathbb{R}$, provided that the L\'evy process $(L_{t})_{t \in \mathbb{R}}$ is
$K$-increasing and that the function $G(s) \coloneqq \mathcal{C}_q \, \E^{s \mathcal{A}_p} \mathcal{B}_p$ satisfies $G(s) \in \pi(K)$ for every $s \geq 0$. To guarantee this property, we introduce the fundamental concept of complete monotonicity for operator-valued functions, see~\cite[Definition 5.4]{Are84}.

\begin{definition}
A function $f \colon \mathbb{R}_+ \to \mathcal{L}(B)$ is called \emph{completely monotone} with respect to $\pi(K)$ if $f$ is infinitely differentiable and
$$
(-1)^n f^{(n)}(\lambda) \succeq 0 \quad \text{for all} \quad \lambda > 0 \quad \text{and all} \quad n \in \mathbb{N}_0.
$$
\end{definition}

We have the following result on the stationarity of Banach-valued state and output processes with CARMA parameters:

\begin{proposition}\label{prop:stationary-CARMA}
Consider a CARMA$(p,q)$ linear state-space model defined by the parameter tuple
$(\mathcal{A}_p, \mathcal{E}_p, \mathcal{C}_q, L)$ as follows
\begin{align}\label{eq:CARMA-banach-stationary}
  \begin{cases}
    \mathrm{d}\mathbf{X}_t = \mathcal{A}_p\,\mathbf{X}_t \,\mathrm{d}t + \mathcal{E}_p \,\mathrm{d}L_t, \\[6pt]
    Y_t = \mathcal{C}_q\,\mathbf{X}_t,
  \end{cases}
\end{align}
where $\mathcal{A}_p$, $\mathcal{E}_p$, and $\mathcal{C}_q$ are as defined in \eqref{eq:Ap-Banach}, \eqref{eq:Bp-Banach}, and \eqref{eq:Cq-Banach}, respectively, and $(L_t)_{t\in\mathbb{R}}$ is a two-sided $K$-valued Lévy process with representation \eqref{eq:two-sided-Levy} satisfying
\begin{align*}
\mathbb{E}\!\left[\log\|L_1\|\right] < \infty.
\end{align*}
Moreover, assume that
\begin{align}\label{eq:stationary-solution-condition}
  \Bigl\{\lambda \in \mathbb{C} : 0 \in \sigma\bigl(\mathbf{P}(\lambda)\bigr)\Bigr\} \;\subseteq\; \Bigl\{ \lambda \in \mathbb{C} : \Re(\lambda) \neq 0 \Bigr\},
\end{align}
where $\mathbf{P}(\lambda)$ is the operator polynomial associated with $\mathcal{A}_p$ as
in~\eqref{eq:P}. Then there exists a unique 
stationary solution $(Y_{t})_{t\in\MR}$ to
equation~\eqref{eq:CARMA-banach-stationary} taking values in $K$ if and only if the function
\begin{align*}
\lambda \mapsto \mathbf{Q}(\lambda)\mathbf{P}(\lambda)^{-1}
\end{align*}
is completely monotone with respect to the partial order induced by $\pi(K)$,
where $\mathbf{Q}(\lambda)$ is the operator polynomial associated with $\mathcal{C}_q$
in~\eqref{eq:Q}. 
In that case, the CARMA$(p,q)$ process admits the stationary representation
\begin{align}\label{eq:stationary-solution}
Y_{t} = \int_{-\infty}^{\infty} \mathcal{K}(t - s) \,\mathrm{d}L_{s}, \quad t \in \mathbb{R},
\end{align}
where
\begin{align}\label{eq:stationary-kernel}
\mathcal{K}(t) = \frac{1}{2\pi} \int_{-\infty}^{\infty} \E^{\I \omega t}
  \mathbf{Q}(\I\omega)\mathbf{P}(\I\omega)^{-1} \,\mathrm{d}\omega, \quad t \in \mathbb{R}.
\end{align}
\end{proposition}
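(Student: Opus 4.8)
The plan is to split the proof into three parts: (i) producing a unique stationary state process---and hence an output $Y$ with an explicit moving-average kernel---from the hyperbolicity condition~\eqref{eq:stationary-solution-condition} and the log-moment assumption; (ii) identifying that kernel with $\mathcal{K}$ in~\eqref{eq:stationary-kernel}; and (iii) translating $K$-valuedness of $Y$ into complete monotonicity of $\mathbf{Q}(\lambda)\mathbf{P}(\lambda)^{-1}$ via the operator-valued Bernstein--Widder theorem. For (i): since $\mathcal{A}_p$ is the companion operator of $\mathbf{P}$, one has $\lambda \in \rho(\mathcal{A}_p)$ exactly when $\mathbf{P}(\lambda)$ is boundedly invertible, so~\eqref{eq:stationary-solution-condition} reads $\sigma(\mathcal{A}_p)\cap\I\mathbb{R}=\emptyset$; this gives a bounded spectral projection $\Pi$ splitting $B^p$ into a subspace on which $(\mathcal{S}_t)_{t\ge0}$ is exponentially stable and one on which it extends to an exponentially decaying backward semigroup. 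Under $\mathbb{E}[\log\|L_1\|]<\infty$, the sharp threshold for a.s.\ convergence of generalized Ornstein--Uhlenbeck integrals, the process
\begin{align*}
\mathbf{X}_t \df \int_{-\infty}^{t}\mathcal{S}_{t-s}\,\Pi\,\mathcal{E}_p\,\D L_s - \int_{t}^{\infty}\mathcal{S}_{t-s}\,(\mathcal{I}_p-\Pi)\,\mathcal{E}_p\,\D L_s
\end{align*}
is well-defined, strictly stationary, and solves~\eqref{eq:CARMA-banach-stationary} in the analytically weak sense of Proposition~\ref{prop:OU-mild-sol}; uniqueness holds because the difference of two stationary solutions solves $\mathbf{Z}_t = \mathcal{S}_{t-s}\mathbf{Z}_s$, which stays bounded only if $\mathbf{Z}\equiv 0$ by hyperbolicity. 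Then $Y_t = \mathcal{C}_q\mathbf{X}_t = \int_{\mathbb{R}} G(t-u)\,\D L_u$ with $G(u) = \mathcal{C}_q\mathcal{S}_u\Pi\,\mathcal{E}_p$ for $u\ge 0$ and $G(u) = -\mathcal{C}_q\mathcal{S}_u(\mathcal{I}_p-\Pi)\,\mathcal{E}_p$ for $u<0$.

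For (ii), a direct computation with the companion matrix~\eqref{eq:Ap-Banach} gives the transfer-function identity $\mathcal{C}_q(\lambda\mathcal{I}_p-\mathcal{A}_p)^{-1}\mathcal{E}_p = \mathbf{Q}(\lambda)\mathbf{P}(\lambda)^{-1}$ for $\lambda\in\rho(\mathcal{A}_p)$ (the last block column of $(\lambda\mathcal{I}_p-\mathcal{A}_p)^{-1}$ has entries $\lambda^{i-1}\mathbf{P}(\lambda)^{-1}$, and $\mathcal{C}_q$ reads off $\sum_{i=0}^q C_i\lambda^i$). Computing the bilateral Laplace transform of $G$ on the two spectral subspaces via $\int_0^\infty \E^{-\lambda u}\mathcal{S}_u\Pi\,\D u = (\lambda\mathcal{I}_p-\mathcal{A}_p)^{-1}\Pi$ and its backward analogue identifies $\lambda\mapsto\mathbf{Q}(\lambda)\mathbf{P}(\lambda)^{-1}$ as the bilateral Laplace transform of $G$ on a strip containing $\I\mathbb{R}$; since $p>q$ forces $\|\mathbf{Q}(\I\omega)\mathbf{P}(\I\omega)^{-1}\| = O(|\omega|^{q-p})$ and $\mathbf{P}(\I\omega)$ is boundedly invertible for all $\omega\in\mathbb{R}$, Fourier inversion along the imaginary axis yields $G = \mathcal{K}$, hence the representation~\eqref{eq:stationary-solution}.

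For (iii), since $(L_t)_{t\in\mathbb{R}}$ is $K$-increasing, $\D L_s$ is a $K$-valued random measure, so---ranging over $K$-valued drivers as in the necessity argument of Proposition~\ref{prop:pos-necessary-car}---the stationary $Y$ is $K$-valued iff $\mathcal{K}(u)\in\pi(K)$ for a.e.\ $u$. If $\mathbf{Q}\mathbf{P}^{-1}$ is completely monotone, the operator-valued Bernstein--Widder theorem yields $\mathbf{Q}(\lambda)\mathbf{P}(\lambda)^{-1} = \int_{[0,\infty)}\E^{-\lambda u}\,\nu(\D u)$ with $\nu$ a $\pi(K)$-valued measure; the decay forced by $p>q$ makes $\nu$ absolutely continuous, and by uniqueness of transforms its density is $\mathcal{K}$, so $\mathcal{K}(u)\in\pi(K)$ and $\mathcal{K}(u)=0$ for $u<0$, whence $Y_t = \int_{-\infty}^t \mathcal{K}(t-s)\,\D L_s\in K$. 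Conversely, if $Y$ is $K$-valued then $\mathcal{K}(u)\in\pi(K)$ pointwise; since the positivity propagated through the realization forces $\mathbf{Q}\mathbf{P}^{-1}$ to be analytic on $(0,\infty)$ and the kernel to be causal, one gets $\mathbf{Q}(\lambda)\mathbf{P}(\lambda)^{-1} = \int_0^\infty \E^{-\lambda u}\mathcal{K}(u)\,\D u$ for $\lambda>0$, and differentiating under the integral gives $(-1)^n\tfrac{\D^n}{\D\lambda^n}\mathbf{Q}(\lambda)\mathbf{P}(\lambda)^{-1} = \int_0^\infty u^n\E^{-\lambda u}\mathcal{K}(u)\,\D u \succeq 0$, i.e.\ $\mathbf{Q}\mathbf{P}^{-1}$ is completely monotone.

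I expect two main obstacles. The first is to make the spectral dichotomy and the a.s.\ convergence of the two-sided Ornstein--Uhlenbeck integral rigorous in a general separable Banach space under only $\mathbb{E}[\log\|L_1\|]<\infty$, and to check that the resulting process is consistent with the analytically weak solution concept of Proposition~\ref{prop:OU-mild-sol}. The second, and more substantial, concerns part~(iii): the operator-valued Bernstein--Widder theorem requires care about the hypotheses on $\pi(K)$ (closedness, normality) and on $\mathbf{Q}\mathbf{P}^{-1}$ under which complete monotonicity yields a $\pi(K)$-valued representing measure, and one must then use $p>q$ to upgrade that measure to an honest $\pi(K)$-valued kernel. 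Moreover, complete monotonicity is an intrinsically one-sided (causal) notion, so in the converse direction the crux is to show that $K$-positivity of the stationary output, together with the hyperbolicity assumption~\eqref{eq:stationary-solution-condition}, confines $\sigma(\mathcal{A}_p)$ to the open left half-plane so that the unique stationary solution is automatically causal; this is where I would expect to need the full strength of~\eqref{eq:stationary-solution-condition} and, if necessary, an additional irreducibility hypothesis on the realization.
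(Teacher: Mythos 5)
Your proposal follows essentially the same route as the paper's proof: compute the transfer function from the companion structure, $(\lambda\mathcal{I}_p-\mathcal{A}_p)^{-1}\mathcal{E}_p=(\mathbb{I},\lambda\mathbb{I},\ldots,\lambda^{p-1}\mathbb{I})\mathbf{P}(\lambda)^{-1}$, so that $\mathcal{C}_q(\lambda\mathcal{I}_p-\mathcal{A}_p)^{-1}\mathcal{E}_p=\mathbf{Q}(\lambda)\mathbf{P}(\lambda)^{-1}$; define $\mathcal{K}$ by Fourier inversion along the imaginary axis, which \eqref{eq:stationary-solution-condition} makes singularity-free; write the candidate stationary solution as a two-sided moving average whose convergence follows from exponential decay of the kernel plus $\mathbb{E}[\log\|L_1\|]<\infty$; and invoke the operator-valued Bernstein theorem (the paper cites Arendt, Theorem 5.5) to translate $K$-positivity of the kernel into complete monotonicity of $\mathbf{Q}(\lambda)\mathbf{P}(\lambda)^{-1}$. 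Where you add value is in making explicit two points the paper's proof glosses over: the spectral projection $\Pi$ that justifies splitting the two-sided integral into causal and anticausal parts (the paper writes the difference of the integrals over $(-\infty,t]$ and $[t,\infty)$ directly and appeals to growth bounds of the semigroup), and the observation that complete monotonicity is an intrinsically causal notion, so the converse direction must argue that the anticausal part of $\mathcal{K}$ vanishes. Your flagged obstacles are real and not resolved by the paper either: in a general Banach space $\sigma(\mathcal{A}_p)\cap\I\mathbb{R}=\emptyset$ does not by itself give an exponential dichotomy of the semigroup (spectral bound versus growth bound), and the paper's application of Bernstein's theorem is to $G(s)=\mathcal{C}_q\mathcal{S}_s\mathcal{E}_p$ for $s\ge 0$ only, leaving the identification of $\mathcal{K}$ on the negative half-line implicit. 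So your proposal is the paper's argument, carried out with more care at precisely the places where the published proof is thinnest.
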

\begin{proof}
  Define the kernel $\cK\colon\MR\to\cL(B,B^p)$ by
  \begin{align}\label{eq:kernels-g}
   \cK(t)\df \lim_{R\to\infty}\frac{1}{2\pi \I}\int_{-\I R}^{\I R}\E^{\lambda t}(\lambda
    \cI_p-\cA_{p})^{-1}\cB_{p}\D \lambda,\quad t\in\mathbb{R}.
  \end{align}

Note that the term $(\lambda \cI_p-\cA_{p})^{-1}\cB_{p}$ can be computed explicitly by solving the following linear equation  
    \begin{align*}
      (\lambda \cI_p-\cA_{p})\mathbf{F}=\cB_p,
    \end{align*}
    for $\mathbf{F}\df(F_{1},F_{2},\ldots,F_{p})^{\intercal}\in\cL(B,B^p)$  with
$F_{i}\in \cL(B)$ for all $i=1,\ldots,p$. Indeed, by the specific form of the block
operators $\cA_p$ and $\cB_p$, we obtain the following explicit rational form:
$$\lambda\mapsto(\lambda
  \cI_p-\cA_{p})^{-1}\cB_{p}=(\mathbb{I},\lambda 
  \mathbb{I},\ldots,\lambda^{p-1}\mathbb{I})\mathbf{P}(\lambda)^{-1}.$$
    Therefore, we see that the complex integral in~\eqref{eq:kernels-g}, for every $ R\geq 0$, is well defined, since by assumption~\eqref{eq:stationary-solution-condition} there is no singularity of
  $$\lambda\mapsto(\lambda \cI_p-\cA_{p})^{-1}\cB_{p}=(\mathbb{I},\lambda\mathbb{I},\ldots,\lambda^{p-1}\mathbb{I})\mathbf{P}(\lambda)^{-1}$$ on the imaginary axis.  

Define a candidate stationary solution $(Y_t)_{t\in\mathbb{R}}$ by
\begin{align}\label{eq:Z-stationary-solution}
Y_{t} \coloneqq \int_{-\infty}^{t}\mathcal{C}_{q}\mathcal{K}(t-u)\,\mathrm{d}L_{u} - \int_{t}^{\infty}\mathcal{C}_{q}\mathcal{K}(t-u)\,\mathrm{d}L_{u}, \quad t\in\mathbb{R}.
\end{align}
We show that (i) these integrals are well-defined, and (ii) $(Y_{t})_{t\in\mathbb{R}}$ is the unique stationary solution to~\eqref{eq:CARMA-banach-stationary}.

Note first, that the integrals with respect to the L\'evy subordinator $(L_{t})_{t\in\MR}$ are again well defined, see the proof of
Proposition~\ref{prop:OU-mild-sol}, whenever $\cK(t)\in \pi(K)$ for all $t\in\MR$. 

Note, that from Bernstein’s theorem (see \cite[Theorem 5.5]{Are84}), the kernel $G(s)=\mathcal{C}_q\cS_{s}\mathcal{E}_p$ is positive with respect to $\pi(K)$ if and only if its Laplace transform $\varphi(\lambda)$ is completely monotone. But since, the Laplace transform, as we just computed, satisfies $\varphi(\lambda)=\mathbf{Q}(\lambda)\mathbf{P}(\lambda)^{-1}$, we see that $\mathcal{K}$ belongs to $\pi(K)$ and the stochastic integrals over $(-T,t]$ for any $T\in\MRplus$ are well-defined in the Pettis sense.

Next, note that there exist $\eta>0$ and $\delta>0$ such that for all $u\leq 0$ we have
  $\norm{\cK(-u)}_{\cL(B,B^{p})}\leq \eta \E^{-w(\cS) |u|}$
  and for all $u\geq 0$ we have $\norm{\cK(u)}_{\cL(B,B^{p})}\leq \eta
\E^{-w(\cS)u}$, see~\cite{EN00}, where $w(\cS)$ denotes the growth bound of $(\cS_{t})_{t\geq 0}$. This together with $\EX{\log(\norm{L_{1}})}<\infty$ implies
  the existence of the integrals  $\int_{-\infty}^{t}\cK(t-u)\D L_{u}$ and $\int_{t}^{\infty}\cK(t-u)\D L_{u}$, respectively, as limits of integrals over the
  intervals $(-T,t]$, resp. $[t,T)$, for $T\to\infty$,  see also~\cite{CM87}.

That $(Y_{t})_{t\in\MR}$ is a stationary solution to the CARMA$(p,q)$ state space equation then follows from the spectral representation of the semigroup $(\cS_{t})_{t\geq 0}$, see~\cite[Chapter 3, 5.15 Corollary]{EN00}, and representation~\eqref{eq:stationary-solution} follows.
\end{proof}

Finally, we derive the covariance structure of the Banach-valued state space process:
\begin{proposition}\label{prop:second-order-properties}
Let $p \in \mathbb{N}$ and $q \in \mathbb{N}_0$ with $q < p$, and let $\mathcal{A}_p$ be as in \eqref{eq:Ap-Banach} and $(\mathcal{S}_t)_{t \in \mathbb{R}}$ be the semigroup generated by $\mathcal{A}_p$. Moreover, let $\mathcal{E}_p$ and $\mathcal{C}_q$ be, respectively, as in \eqref{eq:Bp-Banach} and~\eqref{eq:Cq-Banach}. Let $(\mathbf{X}_t)_{t \in \mathbb{R}}$ be given by the Banach space-valued state-space model 
\[
\D \mathbf{X}_t = \mathcal{A}_p \mathbf{X}_t \, \D t + \mathcal{E}_p \, \D L_t, \quad Y_t = \mathcal{C}_q \mathbf{X}_t, \qquad t\in \mathbb{R}\,,
\]
with $\mathbf{X}_{0}=\mathbf{x}\in K^{p}$ and $(L_t)_{t\in \mathbb{R}}$ is a square-integrable Lévy process with covariance operator $\mathcal{Q}$ and values in $K$, the existence of which is given by Proposition~\ref{prop:OU-mild-sol}. Then, for all $s < t$, the conditional covariance operator of $Y_t$ given $\mathcal{F}_s$ is
\[
\operatorname{Var}[Y_t | \mathcal{F}_s] = \mathcal{C}_q \Sigma_{t,s} \mathcal{C}_q^*,
\]
where
\[
\Sigma_{t,s} = \int_s^t \mathcal{S}_{t - u} \mathcal{E}_p \mathcal{Q} \mathcal{E}_p^* \mathcal{S}_{t - u}^* \, \D u.
\]
Moreover, for $h \geq 0$, the autocovariance function is
\[
\operatorname{Cov}[Y_{t + h}, Y_t | \mathcal{F}_s] = \mathcal{C}_q \mathcal{S}_h \Sigma_{t,s} \mathcal{C}_q^*.
\]
If, in addition, $(\mathbf{X}_t)_{t \in \mathbb{R}}$ is stationary with representation
\begin{align}\label{eq:CARMA-stationary}
    \mathbf{X}_t = \int_{-\infty}^t \mathcal{S}_{t - s} \mathcal{E}_p \, \D L_s,
\end{align}
then
\[
\operatorname{Var}[Y_t] = \mathcal{C}_q \Sigma_\infty \mathcal{C}_q^*, \quad \forall t \in \mathbb{R},
\]
where
\[
\Sigma_\infty = \int_0^\infty \mathcal{S}_u \mathcal{E}_p \mathcal{Q} \mathcal{E}_p^* \mathcal{S}_u^* \, \D u,
\]
and the autocovariance function simplifies to $\operatorname{Cov}[Y_{t + h}, Y_t] = \mathcal{C}_q \mathcal{S}_h \Sigma_\infty \mathcal{C}_q^*$ for $h \geq 0.$
\end{proposition}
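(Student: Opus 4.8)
The plan is to obtain all four identities from the variation-of-constants representation of $(\mathbf{X}_t)$ furnished by Proposition~\ref{prop:OU-mild-sol}, combined with an Itô-isometry computation for Banach-valued stochastic integrals against the square-integrable subordinator $(L_t)$. First I would fix $0\le s<t$ and use $\mathbf{X}_t=\mathcal{S}_{t-s}\mathbf{X}_s+\int_s^t\mathcal{S}_{t-u}\mathcal{E}_p\,\D L_u$. Since $\mathbf{X}_s$ is $\mathcal{F}_s$-measurable and the increment $(L_u-L_s)_{u\ge s}$ is independent of $\mathcal{F}_s$, the stochastic convolution over $(s,t]$ is independent of $\mathcal{F}_s$; after centering, the $\mathcal{F}_s$-measurable term $\mathcal{S}_{t-s}\mathbf{X}_s$ and the deterministic mean of the stochastic convolution over $(s,t]$ both cancel, so $\operatorname{Var}[\mathbf{X}_t\mid\mathcal{F}_s]$ reduces to the (unconditional) covariance operator of $\int_s^t\mathcal{S}_{t-u}\mathcal{E}_p\,\D L_u$.

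The key step is to identify this covariance operator with $\Sigma_{t,s}$. Testing against $\mathbf{g},\mathbf{h}\in(B^p)^{*}$ reduces the claim to the scalar identity
\[
\operatorname{Cov}\!\Big[\big\langle\mathbf{g},\,\int_s^t\mathcal{S}_{t-u}\mathcal{E}_p\,\D L_u\big\rangle_p,\ \big\langle\mathbf{h},\,\int_s^t\mathcal{S}_{t-u}\mathcal{E}_p\,\D L_u\big\rangle_p\Big]=\int_s^t\big\langle\mathcal{E}_p^{*}\mathcal{S}_{t-u}^{*}\mathbf{g},\ \mathcal{Q}\,\mathcal{E}_p^{*}\mathcal{S}_{t-u}^{*}\mathbf{h}\big\rangle\,\D u,
\]
which I would establish by rewriting the left-hand side as the covariance of the real-valued Lévy integrals $\int_s^t\langle\mathcal{E}_p^{*}\mathcal{S}_{t-u}^{*}\mathbf{g},\D L_u\rangle$ and $\int_s^t\langle\mathcal{E}_p^{*}\mathcal{S}_{t-u}^{*}\mathbf{h},\D L_u\rangle$ and applying the Itô isometry for square-integrable Lévy integrals (as in the Hilbert-space theory, cf.~\cite{PZ07}), exploiting that $\mathcal{Q}$ is the covariance operator of $L_1$ and that the increments are stationary and independent; approximation by Riemann--Stieltjes sums over partitions of $(s,t]$ makes this rigorous. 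The right-hand side is exactly $\langle\mathbf{g},\Sigma_{t,s}\mathbf{h}\rangle$ with $\Sigma_{t,s}=\int_s^t\mathcal{S}_{t-u}\mathcal{E}_p\mathcal{Q}\mathcal{E}_p^{*}\mathcal{S}_{t-u}^{*}\,\D u$ understood as a Pettis integral of operators, so $\operatorname{Var}[\mathbf{X}_t\mid\mathcal{F}_s]=\Sigma_{t,s}$. Applying the bounded output operator $\mathcal{C}_q$ and using $\operatorname{Var}[\mathcal{C}_q\xi]=\mathcal{C}_q\operatorname{Var}[\xi]\mathcal{C}_q^{*}$ yields $\operatorname{Var}[Y_t\mid\mathcal{F}_s]=\mathcal{C}_q\Sigma_{t,s}\mathcal{C}_q^{*}$.

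For the lag-$h$ autocovariance I would write $\mathbf{X}_{t+h}=\mathcal{S}_h\mathbf{X}_t+\int_t^{t+h}\mathcal{S}_{t+h-u}\mathcal{E}_p\,\D L_u$; the final integral is independent of $\mathcal{F}_t\supseteq\mathcal{F}_s$, hence, after centering, contributes nothing to the conditional cross-covariance with $\mathbf{X}_t$, and so $\operatorname{Cov}[\mathbf{X}_{t+h},\mathbf{X}_t\mid\mathcal{F}_s]=\mathcal{S}_h\operatorname{Var}[\mathbf{X}_t\mid\mathcal{F}_s]=\mathcal{S}_h\Sigma_{t,s}$; applying $\mathcal{C}_q$ on the left and $\mathcal{C}_q^{*}$ on the right gives $\operatorname{Cov}[Y_{t+h},Y_t\mid\mathcal{F}_s]=\mathcal{C}_q\mathcal{S}_h\Sigma_{t,s}\mathcal{C}_q^{*}$. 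In the stationary case with $\mathbf{X}_t=\int_{-\infty}^t\mathcal{S}_{t-s}\mathcal{E}_p\,\D L_s$, the same isometry computation now over $(-\infty,t]$, followed by the substitution $u\mapsto t-u$, gives $\operatorname{Var}[Y_t]=\mathcal{C}_q\Sigma_\infty\mathcal{C}_q^{*}$ with $\Sigma_\infty=\int_0^\infty\mathcal{S}_u\mathcal{E}_p\mathcal{Q}\mathcal{E}_p^{*}\mathcal{S}_u^{*}\,\D u$; the convergence of this improper integral follows from the exponential bound $\norm{\mathcal{S}_u}_{\cL(B^p)}\le\eta\,\E^{-w(\mathcal{S})u}$ available under the spectral assumption, exactly as in the proof of Proposition~\ref{prop:stationary-CARMA}. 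The identity $\operatorname{Cov}[Y_{t+h},Y_t]=\mathcal{C}_q\mathcal{S}_h\Sigma_\infty\mathcal{C}_q^{*}$ then follows by the same independence argument, using that $\int_t^{t+h}\mathcal{S}_{t+h-u}\mathcal{E}_p\,\D L_u$ is independent of $\mathbf{X}_t$.

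The main obstacle is the Banach-space Itô isometry in the second step: because the stochastic integral is here defined only in the Pettis (weak) sense, one must verify the second-order Pettis integrability of the operator-valued integrand, interpret $\Sigma_{t,s}$ and $\Sigma_\infty$ as weak (Pettis) integrals of operators from $(B^p)^{*}$ into $B^p$, and carefully pass from the scalar covariance identities to a genuine operator identity. This is precisely where the square-integrability hypothesis on $(L_t)$ enters essentially; the remainder of the argument is bookkeeping with adjoints, the semigroup property, and independence of the Lévy increments.
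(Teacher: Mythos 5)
Your proposal is correct and follows essentially the same route as the paper's proof: decompose $\mathbf{X}_t$ (resp.\ $\mathbf{X}_{t+h}$) via the flow/variation-of-constants formula, identify the conditional covariance of the stochastic convolution over $(s,t]$ with $\Sigma_{t,s}$ by the Lévy Itô isometry, sandwich with $\mathcal{C}_q$ and $\mathcal{C}_q^{*}$, and use independence of increments to extract the $\mathcal{S}_h$ factor for the lag. You simply spell out more carefully than the paper the weak (Pettis) interpretation of $\Sigma_{t,s}$ and the passage from scalar to operator identities, which is a welcome but not structurally different addition.
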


\begin{proof}
Since $\mathbf{X}_t$ is given by
\[
\mathbf{X}_t = \int_{-\infty}^t \mathcal{S}_{t - u} \mathcal{E}_p \, \D L_u,
\]
the conditional covariance operator given $\mathcal{F}_s$ (the $\sigma$-algebra up to time $s$) is
\[
\operatorname{Cov}[\mathbf{X}_t | \mathcal{F}_s] = \int_s^t \mathcal{S}_{t - u} \mathcal{E}_p \mathcal{Q} \mathcal{E}_p^* \mathcal{S}_{t - u}^* \, \D u = \Sigma_{t,s}.
\]

Then, the conditional variance of $Y_t = \mathcal{C}_q \mathbf{X}_t$ is
\[
\operatorname{Var}[Y_t | \mathcal{F}_s] = \mathcal{C}_q \operatorname{Cov}[\mathbf{X}_t | \mathcal{F}_s] \mathcal{C}_q^* = \mathcal{C}_q \Sigma_{t,s} \mathcal{C}_q^*.
\]

Similarly, the conditional covariance between $Y_{t + h}$ and $Y_t$ given $\mathcal{F}_s$ is
\begin{align*}
\operatorname{Cov}[Y_{t + h}, Y_t | \mathcal{F}_s] &= \operatorname{Cov}[\mathcal{C}_q \mathbf{X}_{t + h}, \mathcal{C}_q \mathbf{X}_t | \mathcal{F}_s] \\
&= \mathcal{C}_q \operatorname{Cov}[\mathbf{X}_{t + h}, \mathbf{X}_t | \mathcal{F}_s] \mathcal{C}_q^*.
\end{align*}

Since
\[
\mathbf{X}_{t + h} = \mathcal{S}_h \mathbf{X}_t + \int_t^{t + h} \mathcal{S}_{t + h - u} \mathcal{E}_p \, \D L_u,
\]
and the increments of $L_u$ are independent of $\mathcal{F}_t$, we have $\operatorname{Cov}[\mathbf{X}_{t + h}, \mathbf{X}_t | \mathcal{F}_s] = \mathcal{S}_h \operatorname{Cov}[\mathbf{X}_t | \mathcal{F}_s].$ Therefore,
$$
\operatorname{Cov}[Y_{t + h}, Y_t | \mathcal{F}_s] = \mathcal{C}_q \mathcal{S}_h \Sigma_{t,s} \mathcal{C}_q^*.
$$

If $(\mathbf{X}_t)_{t\in\mathbb R}$ is stationary, then the covariance operator is
$$
\Sigma_\infty = \int_0^\infty \mathcal{S}_u \mathcal{E}_p \mathcal{Q} \mathcal{E}_p^* \mathcal{S}_u^* \, \D u,
$$
provided the integral converges. The variance and autocovariance then follow accordingly.
\end{proof}

\section{L\'evy-Driven Measure-Valued CARMA Processes}\label{sec:super-CARMA}

In this section, we build on the results from Section~\ref{sec:linear-state-space-models}
to introduce pure-jump L\'evy-driven measure-valued CARMA$(p,q)$ processes, where $q < p$. These processes 
extend the CARMA framework to the setting of measure-valued state-space processes.

\subsection{The Space of Positve Finite Borel Measures}
We begin by introducing the necessary notation and mathematical preliminaries. Let $E$ be a Polish space with Borel $\sigma$-algebra $\operatorname{Bor}(E)$. Let $\lambda$ be a $\sigma$-finite measure on $(E, \operatorname{Bor}(E))$. Define $L^1(E)$ as the space of real-valued integrable measurable functions on $E$, i.e., 
$$L^1(E) = \left\{\alpha \colon E \to \mathbb{R} \mbox{ measurable}\, \Big\vert  \, \int_{E} \alpha(x) \lambda (\D x) <\infty\right \}\,.$$
Equipped with the norm $\|\alpha\|_{L^1} = \int_E |\alpha(x)| \, \lambda(\D x)$ the space $(L^1(E), \norm{\cdot}_{L^1})$ is a separable Banach space (see, e.g.~\cite[Lemma 23.19]{schilling2017measures}). Define $M_+(E)$ as the space of finite non-negative measures on $E$.  

Let $M(E) = M_+(E) - M_+(E)$ denote the space of all finite signed measures on $E$. Let $\Pi(E)$ denote the collection of all measurable partitions of $E$. For a signed measure $\nu$, the total variation measure is defined by
\begin{align*}
|\nu|(E) = \sup \left\{ \sum_i |\nu(E_i)| \colon E_i \in \pi(E), \, \pi(E) \in \Pi(E) \right\},
\end{align*}
which induces the total variation norm $\|\cdot\|_{M(E)}$ on $M(E)$. 

Equipped with this norm, $(M(E), \|\cdot\|_{M(E)})$ forms a {\it non-separable} Banach space \cite[Exercises 9a]{axler2020measure}. Let $C_0(E)$ denote the space of continuous and vanishing-at-infinity real-valued functions on $E$ equipped with the topology of uniform convergence. The supremum norm is denoted by $\|\cdot\|_{\infty}$. We define $C_{0,+}(E)$ as the subset of all positive elements in $C_0(E)$. Note that the space $C_0(E)$ is the dual of $M(E)$ and $C_{0,+}(E)$ the dual cone of $M_+(E)$.

Since separability of the Banach space is crucial for the results in Section~\ref{sec:linear-state-space-models} to hold in full generality (see Remark~\ref{rem:finite-dim-noise}), we restrict our attention to two classes of processes:
\begin{itemize}
    \item[(i)] measure-valued processes taking values in $M_+(E)$ but driven by \emph{finite-dimensional} L\'evy noise (cf.\ Remark~\ref{rem:finite-dim-noise});
    \item[(ii)] processes taking values in the subspace of $M(E)$ consisting of measures that are absolutely continuous with respect to $\lambda$.
\end{itemize}
The first case is fully covered by the theory developed in Section~\ref{sec:linear-state-space-models}, and below in Section~\ref{sec:examples} we give some examples of measure-valued CARMA processes driven by finite-dimensional noise. In the next section, we focus therefore on the second case. The corresponding subspace of $M(E)$ is isometrically isomorphic to $L^1(E)$ and thus forms a separable Banach space. This not only ensures that the assumptions of Section~\ref{sec:linear-state-space-models} are satisfied, but also enables more explicit representations of the model dynamics in terms of $L^1$-valued CARMA processes.

\subsection{An Absolutely Continuous Measure-Valued CARMA Process}\label{sec:super-OU}

Consider a measure $\mu$ of the form $\mu(\D x) = \alpha(x) \lambda(\D x)$, where $\alpha \in L^1(E)$. Then
\[
\|\mu\|_{M(E)} = \int_E |\alpha(x)|\,\lambda(\D x) = \|\alpha\|_{L^1(E)}\,.
\]
Thus, the subspace of absolutely continuous measures is isometrically isomorphic to $L^1(E)$, and its separability follows directly from the separability of $L^1(E)$. 

We denote by $L^1_+(E)$ the subset of $L^1(E)$ consisting of functions that are nonnegative almost everywhere. Clearly, $L^1_+(E)$ is a proper convex cone in $L^1(E)$. We define the nonzero part of this cone by $L^{1,\circ}_+(E) := L^1_+(E) \setminus \{0\}$.

Let $L^\infty(E)$ be the space of essentially bounded measurable functions, i.e.,
\[
L^\infty(E) = \left\{ g \colon E \to \mathbb{R} \,\middle|\, g \text{ measurable and } \esssup_{x \in E} |g(x)| < \infty \right\},
\]
equipped with the essential supremum norm $\|g\|_{L^\infty(E)} := \esssup_{x \in E} |g(x)|$. Then $(L^\infty(E), \|\cdot\|_{L^\infty})$ is a Banach space, and it forms the dual of $L^1(E)$ under the pairing
\[
\langle g, \alpha \rangle := \int_E g(x)\, \alpha(x)\, \lambda(\D x)\,.
\]

Under this duality, the dual cone of $L^1_+(E)$ is given by
\[
L^\infty_+(E) := \left\{ g \in L^\infty(E) \,\middle|\, g(x) \geq 0 \;\text{a.e. on } E \right\}.
\]

We emphasize that $L^1(E)$ is only one example of a separable Banach subspace of $M(E)$; we focus on it here to exemplify the theory. A full generalization would require a separate treatment of stochastic integration in non-separable Banach spaces.

Throughout this section, we use the notation $\alpha$ to refer to generic elements of $L^1_+(E)$ and $g$ to refer to elements of the dual cone $L^\infty_+(E)$. For elements in the $p$-fold Cartesian product of $L^1_+(E)$ or $L^\infty(E)$, we write $\bm\alpha = (\alpha^1, \ldots, \alpha^p) \in L^1_+(E)^p$ and $\bm g = (g^{(1)}, \ldots, g^{(p)}) \in L^\infty(E)^p$. The dual pairing on the product space is defined componentwise:
\[
\langle \bm g, \bm \alpha \rangle_p := \sum_{i=1}^p \langle g^{(i)}, \alpha^i \rangle = \sum_{i=1}^p \int_E g^{(i)}(x)\, \alpha^i(x)\, \lambda(\D x),
\]
and we define the $p$-norm by
\[
\|\bm \alpha\|_p := \sum_{i=1}^p \|\alpha^i\|_{L^1(E)}.
\]

\begin{definition}\label{def:super-OU}

Let $(L_t)_{t \geq 0}$ be an $L^1_+(E)$-valued Lévy process in $L^1(E)$ with characteristic triplet $(\gamma, 0, \ell)$, where $\gamma \in L^1_+(E)$ is the drift term and $\ell$ is the Lévy measure concentrated on $L^1_+(E)$ satisfying Theorem~\ref{pettis} with the cone $K$ being $L^1_+(E)$ and 
\begin{equation}\label{eq:int-measure}
\int_{L_+^{1, \circ}(E)} (\langle 1, \alpha\rangle \wedge 1 )\, \ell(\D \alpha)< \infty.
\end{equation}
Further, assume that $\mathcal{A} \colon D(\cA) \subset L^1_+(E)^p \rightarrow L^1_+(E)^p$ is the generator of a strongly continuous, quasi-positive operator semigroup $(\mathcal{S}_t)_{t \geq 0}$, and that the input operator $\mathcal{E} \in \mathcal{L}(L^1_+(E), L^1_+(E)^p)$ satisfies $\mathcal{E}(L^1_+(E)) \subseteq L^1_+(E)^p$. We define $(\mathbf{X}_t)_{t \geq 0}$ in $L^1_+(E)^p$, the existence of which is guaranteed by Proposition \ref{prop:OU-mild-sol}, to be the \emph{analytically weak solution} of 
\begin{align}\label{eq:state-space-X}
\D \mathbf{X}_t = \mathcal{A} \mathbf{X}_t \, \D t + \mathcal{E} \, \D L_t, \quad t \geq 0\,, \qquad \bm X_0=\bm \alpha \in L^1_+(E)^p\,.
\end{align}
We call the process $\bm{X}$ an $L^1(E)^p$-valued Ornstein-Uhlenbeck process.
\end{definition}

Let $\mathcal{A}$ be as in Definition~\ref{def:super-OU}, let $\mathcal{A}^*$ denote its adjoint, and let $(\mathcal{S}_t^*)_{t \geq 0} \colon L^\infty(E)^p \to L^\infty(E)^p$ denote the adjoint semigroup of $(\mathcal{S}_t)_{t \geq 0}$. 
Observe that the domain $D(\mathcal{A}^*)$ of $\mathcal{A}^*$ consists of all functions $\mathbf{f} \in L^\infty(E)^p$ for which the limit
$$
\lim_{t \downarrow 0} \frac{\mathcal{S}_t^* \mathbf{f} - \mathbf{f}}{t}
$$
exists in the norm
$$
\|\mathbf{f}\|_{\infty, p} \coloneqq \sum_{i=1}^p \|f^{(i)}\|_{\infty},
$$
where $\mathbf{f} = (f^{(1)}, \ldots, f^{(p)})^{\intercal}$.

Let $\cD_0$ be the class of functions $\xi\colon L^1(E)^p \rightarrow \mathbb{R}$
of the form 
$$\xi(\bm\alpha) = G(\langle \bm{g}_1, \bm{\alpha}\rangle_p, \ldots, \langle \bm{g}_n, \bm\alpha\rangle_p)\,,$$
where  $ G\in C_0^2(\mathbb{R}^n)$ and
$\bm{g}_1, \ldots, \bm{g}_n \in D(\cA^*)$. 

It holds that the Fr\'echet derivative of $\xi$ at $\bm \alpha \in L^1(E)^p$ is
$$\xi'(\bm{\alpha}) = \sum_{i=1}^n\partial_i G(\langle \bm{g}_1, \bm{\alpha}\rangle_p, \ldots, \langle
\bm{g}_n, \bm{\alpha}\rangle_p) \bm g_i \in L^\infty(E)^p,$$ where $\partial_iG$ denotes the $i^{th}$ partial derivative of $G$.  

For $\xi \in \cD_0$, define
\begin{align*}
\cG\xi(\bm \alpha) &= \sum_{i=1}^n \partial_i G(\langle \bm{g}_1, \bm{\alpha}\rangle_p, \ldots, \langle
\bm{g}_n, \bm{\alpha}\rangle_p) \\
&\qquad\qquad\times\left[\langle \cA^* \bm g_i, \bm \alpha\rangle_p + \langle \bm g_i, \mathcal{E}\gamma\rangle_p
- \int_{\{L_+^{1,\circ}(E)\colon \|\nu\|_{L^1(E)}\leq 1\}} \langle \bm g_i, \cB \sigma \rangle_p \, 
\ell(\D \sigma)\right] \\
&\qquad+ \int_{L_+^{1,\circ}(E)}\left(\xi(\bm \alpha+\mathcal{E}\sigma) -\xi(\bm\alpha)\right)\,\ell(\D \sigma)\,.
\end{align*}

In the following proposition we write $(\bm X_t)_{t\geq 0}$ as a solution to the martingale problem and give an It\^o type formula. The proof follows immediately from Proposition~\ref{prop:transition-semigroup}.
\begin{proposition}\label{prop:properties-X}
Let $(\bm{X}_t)_{t\geq 0}$ be as in Definition \ref{def:super-OU}. Then the following properties (all equivalent to each other) hold:
\begin{enumerate}
    \item[i)] Let $N(\D s, \D \alpha)$ denote the Poisson random measure associated with the jumps of $(L_t)_{t\geq 0}$, and let the compensated Poisson random measure be given by 
\begin{align*}
\tilde{N}(\D s, \D \alpha) = N(\D s, \D \nu) - \ell(\D \alpha) \D s.  
\end{align*}
    For any $\bm g\in D(\cA^*)$, $\bm X_0 =\bm \alpha \in L^1_+(E)^p$, it holds 
    \begin{align}\label{eq:weak}
    \langle g^{(i)}, X_t^i\rangle
    &= \langle g^{(i)}, \alpha^i\rangle +\langle g^{(i)}, M_t^i \rangle\nonumber\\
    &\qquad + \int_0^t \left(\sum_{j=1}^p\langle\cA_{ij}^* g^{(i)}, X_s^j\rangle + \langle g^{(i)}, (\cB\gamma)^i\rangle \right.\nonumber\\
    &\qquad \qquad  + \left.\int_{ \{\alpha \in L_+^{1,\circ}(E)\colon \|\alpha\|_{L^1}> 1\}} \langle g^{(i)}, (\mathcal{E}\alpha)^i\rangle \, \ell(d\alpha)\right)\, ds\,, 
    \end{align}
    where $\langle g^{(i)}, M_t^i\rangle = \int_0^t\int_{L_+^{1,\circ}(E)}\langle g^{(i)}, (\mathcal{E}\alpha)^{i} \rangle\, \tilde{N}(\D s, \D \alpha)$, $i=1,\ldots, p$,
    is a purely discontinuous local martingale 
    \item[ii)] $\forall \xi \in \cD_0$, $\bm X_0 =\bm \alpha \in L^1_+(E)^p$, we have 
    \begin{align*}
     \xi(\bm X_t) &= \xi(\bm \alpha) + \int _0^t \cG\xi(\bm X_s)\, \D s 
     \\
    &\qquad + \int_0^t\sum_{i=1}^n\partial_iG (\langle \bm g_i, \bm X_s\rangle_p, \ldots, \langle \bm g_n, \bm X_s\rangle_p)\langle \bm g_i, \D \bm M_s\rangle_p  \,,
    \end{align*}
    where $\langle \bm g_i, \D \bm M_s\rangle_p = \sum_{j=1}^p \int_{L_+^{1,\circ}(E)}\langle g_i^{(j)}, (\mathcal{E}\alpha)^j\rangle \tilde{N}(\D s, \D \alpha)$\,.
    \item[iii)] \label{item-5} For every $G \in C^2(\mathbb{R}^p)$ and $\bm g_1, \ldots, \bm g_n \in D(\cA^*)$, we have
    \begin{align}
    &G(\langle \bm g_1, \bm X_t\rangle_p, \ldots, \langle \bm g_n, \bm X_t\rangle_p )  \nonumber\\
    &\quad = G(\langle \bm g_1, \bm \alpha\rangle_p, \ldots, \langle \bm g_n, \bm \alpha\rangle_p) \nonumber\\
    &\qquad +\int_0^t\sum_{i=1}^n \partial_i G(\langle \bm{g}_1, \bm{X}_s\rangle_p, \ldots, \langle
\bm{g}_n, \bm{X_s}\rangle_p)\left[\langle \cA^* \bm g_i, \bm X_s\rangle_p + \langle \bm g_i, \mathcal{E}\gamma\rangle_p  \right. \nonumber\\
&\qquad \quad \left.- \int_{\{ \alpha \in L_+^{1,\circ}(E) \colon \|\alpha\|_{L^1}\leq1\}} \langle \bm g_i, \cB \nu \rangle_p \, 
\ell(\D \nu) \right]\,\D s\nonumber\\
    &\qquad +\int_0^t\int_{L_+^{1,\circ}(E)} G\left(\langle \bm g_1, \bm X_s +\mathcal{E}\alpha\rangle_p, \ldots,  \langle \bm g_n, \bm X_s +\mathcal{E}\alpha\rangle_p\right)\nonumber\\
    &\qquad \qquad - G\left(\langle \bm g_1, \bm X_s\rangle_p),\ldots, \langle \bm g_n, \bm X_s\rangle_p \right) \ell(\D \alpha) \, \D s
    \nonumber\\
    &\qquad + \int_0^t\sum_{i=1}^n\partial_iG (\langle \bm g_i, \bm X_s\rangle_p, \ldots, \langle \bm g_n, \bm X_s\rangle_p)\langle \bm g_i, \D \bm M_s\rangle_p\,.
    \end{align}
\end{enumerate}
\end{proposition}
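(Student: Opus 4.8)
The plan is to read Definition~\ref{def:super-OU} as the instantiation of the abstract Banach-space framework of Section~\ref{sec:linear-state-space-models} with $B = L^1(E)$, $K = L^1_+(E)$, $B^* = L^\infty(E)$ and dual cone $K^* = L^\infty_+(E)$, and then to invoke Propositions~\ref{prop:OU-mild-sol} and~\ref{prop:transition-semigroup} essentially verbatim. First I would check that the hypotheses of Proposition~\ref{prop:OU-mild-sol} hold in this concrete setting: $L^1(E)$ is a separable Banach space, $L^1_+(E)$ is a proper convex cone, and $L^\infty_+(E)$ generates $L^\infty(E)$ via the decomposition $g = g^+ - g^-$ into positive and negative parts; the driving noise has triplet $(\gamma,0,\ell)$ with $\ell$ concentrated on $L^1_+(E)$ and, by hypothesis, satisfies Theorem~\ref{pettis} (hence the $\ell$-Pettis centering $I_\ell \in L^1_+(E)$ exists and $\gamma_0 = \gamma - I_\ell \in L^1_+(E)$) together with the finite-mass condition~\eqref{eq:int-measure}; and $\mathcal{A}$ is quasi-monotone and generates a strongly continuous semigroup on $L^1(E)^p$ with $\mathcal{E}(L^1_+(E)) \subseteq L^1_+(E)^p$. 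Consequently Proposition~\ref{prop:OU-mild-sol} yields the $L^1_+(E)^p$-valued analytically weak solution $(\bm X_t)_{t\ge0}$ of~\eqref{eq:state-space-X}, satisfying the weak integral identity~\eqref{eq:weak-solution-state-space} for every $\bm g \in D(\mathcal{A}^*)$, together with its variation-of-constants representation and the L\'evy--It\^o decomposition of $L$.

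For part i) I would specialize Proposition~\ref{prop:transition-semigroup}~iii). For each $i \in \{1,\dots,p\}$, insert the test element $(0,\dots,g^{(i)},\dots,0)$ into~\eqref{eq:weak-solution-state-space}, write the transition term through the block representation $(\mathcal{A}_{jk})$ of $\mathcal{A}$ as $\sum_{j=1}^p \langle \mathcal{A}_{ij}^* g^{(i)}, X_s^j \rangle$, and expand the integral against $L$ using the L\'evy--It\^o decomposition $L_s = \gamma s + \int_0^s\int_{D_0} \alpha\,\tilde N(\D u,\D\alpha) + \int_0^s\int_{\{\|\alpha\|_{L^1}>1\}} \alpha\, N(\D u,\D\alpha)$ with $D_0 = \{\alpha\colon 0<\|\alpha\|_{L^1}\le1\}$. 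Adding and subtracting the large-jump compensator $\int_{\{\|\alpha\|_{L^1}>1\}} \langle g^{(i)}, (\mathcal{E}\alpha)^i \rangle\, \ell(\D\alpha)$ and merging the two Poisson integrals into a single compensated integral over $L_+^{1,\circ}(E)$ produces exactly~\eqref{eq:weak}, with $\langle g^{(i)}, M_t^i \rangle = \int_0^t\int_{L_+^{1,\circ}(E)} \langle g^{(i)}, (\mathcal{E}\alpha)^i \rangle\, \tilde N(\D s,\D\alpha)$ a purely discontinuous local martingale.

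For parts ii) and iii) I would apply It\^o's formula for the pure-jump semimartingale $\bm X$, noting that $Q=0$ so no second-order (Brownian) term occurs. Given $\bm g_1,\dots,\bm g_n \in D(\mathcal{A}^*)$ and $G \in C^2(\mathbb{R}^p)$ (respectively $G \in C_0^2(\mathbb{R}^n)$ for $\xi \in \cD_0$), apply the finite-dimensional It\^o formula to the $\mathbb{R}^n$-valued semimartingale $s \mapsto (\langle \bm g_1, \bm X_s \rangle_p,\dots,\langle \bm g_n, \bm X_s \rangle_p)$, whose components decompose by part i); accumulating the jumps through the Poisson random measure gives the formula in iii). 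For $\xi \in \cD_0$ one regroups the zeroth- and first-order pieces of the jump term together with the drift and the block transition term into $\mathcal{G}\xi(\bm X_s)$, which is precisely the generator~\eqref{eq:generator} identified in Proposition~\ref{prop:transition-semigroup}~ii), giving ii). Taking $n=1$, $G(x)=x$ in ii) or iii) returns i), so the three statements are mutually equivalent; they all encode the fact that $(\bm X_t)_{t\ge0}$ solves the martingale problem for $\mathcal{G}$ (Proposition~\ref{prop:transition-semigroup}~iii)).

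The main obstacle, and the only real work beyond transcription, is the convergence of the jump integrals. One bounds $|\langle g, \mathcal{E}\alpha \rangle| \le \|g\|_{L^\infty} \|\mathcal{E}\| \, \|\alpha\|_{L^1}$ and combines this with~\eqref{eq:int-measure} (which controls $\int_{\{\|\alpha\|_{L^1}\le1\}} \|\alpha\|_{L^1}\,\ell(\D\alpha)$ and forces $\ell(\{\|\alpha\|_{L^1}>1\}) < \infty$) and a second-order Taylor estimate for $G$ to control the compensated small-jump part and the first-order remainder, while the large-jump part reduces to an almost surely finite sum. The finiteness of the large-jump drift $\int_{\{\|\alpha\|_{L^1}>1\}} \langle g^{(i)}, (\mathcal{E}\alpha)^i \rangle\, \ell(\D\alpha)$ needs a little care, either under an additional first-moment hypothesis on the large jumps or by reading $M^i$ as a local martingale obtained by localization. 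All the genuinely probabilistic content --- existence of the solution, the martingale-problem characterization, and the explicit form of the generator --- is inherited directly from Propositions~\ref{prop:OU-mild-sol} and~\ref{prop:transition-semigroup}.
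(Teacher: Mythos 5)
Your proposal is correct and follows the same route as the paper, whose entire proof is the single sentence that the result ``follows immediately from Proposition~\ref{prop:transition-semigroup}'' (specialized to $B=L^1(E)$, $K=L^1_+(E)$). You supply considerably more detail than the paper does --- including the hypothesis verification, the L\'evy--It\^o bookkeeping, and the honest flag that the finiteness of the large-jump drift term is not guaranteed by~\eqref{eq:int-measure} alone and needs either an extra first-moment assumption or a localization reading --- all of which is consistent with, and a useful elaboration of, the paper's one-line argument.
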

\smallskip

 Now we are ready to introduce our pure-jump measure-valued CARMA($p,q$) process.

\begin{definition}\label{def:carma-process}
 Let $\mathcal{A}_p, \mathcal{E}_p, \mathcal{C}_q$ be respectively as in \eqref{eq:Ap-Banach}, \eqref{eq:Bp-Banach} and~\eqref{eq:Cq-Banach}, where we specify the Banach space $B$ to be the space of measures absolutely continuous with respect to a $\sigma$-finite measure $\lambda$. Assume $\cA_p$ to be quasi-monotone with respect to $L^1_+(E)^p$, $C_j \in \pi(L^1_+(E))$, for all $j\in \{0,\ldots, q\}$, and $\sE(L^1_+(E)) \subseteq L^1_+(E)$. Let $(L_t)_{t \geq 0}$ be a $L^1_+(E)$-valued Lévy process in $L^1(E)$ with characteristic triplet $(\gamma, 0, \ell)$, where $\gamma \in L^1_+(E)$ and $\ell$ is the Lévy measure concentrated on $L^1_+(E)$ satisfying \eqref{eq:int-measure} and Theorem~\ref{pettis} with the cone $K$ being  $L^1_+(E)$.
Let $(\bm X_t)_{t\geq 0}$ and $(Y_t)_{t\geq 0}$ be given by 
 \begin{align}\label{eq:CARMA-banach}
 \begin{cases}
 \D \mathbf{X}_t &= \mathcal{A}_p \mathbf{X}_t \, \D t + \mathcal{E}_p \, \D L_t,\\
Y_t &= \mathcal{C}_q \mathbf{X}_t,     
 \end{cases}
\end{align}
 where $\mathbf{X}_0 = \bm \alpha \in L^1_+(E)^p$. We call the process $(t,A)\mapsto \int_AY_t(x)\lambda(dx)$ 
a {\it a pure-jump measure-valued CARMA$(p,q)$} process with parameter set $(\mathcal{A}_p, \mathcal{E}_p, \mathcal{C}_q, L)$.
\end{definition}

In the following proposition we derive some properties of the process $(Y_t)_{t\geq 0}$. The proof follows from Propositions~\ref{prop:moments-1} and~\ref{prop:properties-X}.

\begin{proposition}
Let $(Y_{t})_{t\geq 0}$ be as described in Definition \ref{def:carma-process} and such that $Y_0 =\cC_q \bm \alpha$, for $\bm \alpha \in L^1_+(E)^p$. 

\begin{enumerate}
\item[(i)] For all $g$, such that $\cC_q^*g \in D(\cA^*_p)$, it holds for $t\geq 0$,
\begin{align*}
    \langle g, Y_t\rangle   
    &= \langle C^*_q g, \bm \alpha\rangle_p + \int_0^t\left(\langle \cA_p^*\cC^*_qg, \bm X_s\rangle_p + \langle (\cC^*_q g)^{(p)}, \sE\gamma\rangle\right)\, \D s \\
    &\qquad +\int_0^t \int_{\{\alpha \in L_+^{1,\circ}(E) \colon\| \alpha\|_{L^1}> 1\}}  \langle (\cC^*_q g)^{(p)}, \sE\alpha\rangle \,\ell(\D  \alpha)\, \D s\, \\
    &\qquad + \int_0^t \int_{L^{1,\circ}_+(E)}  \langle (\cC^*_q g)^{(p)}, \sE\alpha \rangle \tilde{N}(\D s, \D \alpha)\,.
\end{align*}
\item[(ii)] For all $g \in C_0(E)$, $t\geq 0$,
\begin{align}\label{eq:semigroup-R}
    \mathbb{E}[\mathrm{e}^{-\langle g,  Y_t\rangle}] &= \exp\left\{-\langle \cS_t^*\mathcal{C}_q g, \bm \alpha \rangle_p - \int_0^t\langle (\cS^*_{t-s}\cC_q^*g)^{(p)}, \sE\gamma\rangle\, \D s\, \right.\nonumber\\
    &\qquad + \left. \int_0^t \int_{L_+^{1,\circ}(E)} \left(\e^{-\langle (\cS_{t-s}^* \cC^*_qg)^{(p)}, \sE\alpha\rangle}-1\right)\,\ell(\D\alpha)\, \D s\right\}
\end{align}
\item[(iii)] Assume the L\'evy measure satisfies
$$\int_{\{\|\alpha\|_{L^1}\geq 1\}} |\langle 1, \alpha \rangle| \,\ell(d \alpha)< \infty\,.$$

Then, for $g \in C_0(E)$, $t \geq 0$, 
\begin{align*}
\mathbb{E}[\langle g, Y_t \rangle] &= \langle \mathcal{S}_t^* \mathcal{C}_q^* g, \bm \alpha \rangle_p \!+\!\int_0^t \langle \mathcal{E}^*_p\cS^*_{t-s} \cC^*_qg, \gamma_0\rangle\, \D s\\
&\quad +\int_{0}^{t}\int_{L_+^{1,\circ}(E)} \langle (\mathcal{S}_{t-s}^* \mathcal{C}_q^* g)^{(p)}, \sE \alpha \rangle \, \ell(\D \alpha)\D s.
\end{align*}
Moreover, assume the L\'evy measure satisfies
$$\int_{\{\|\alpha\|_{L^1}\geq 1\}} |\langle 1, \alpha\rangle|^2 \,\ell(\D \alpha)< \infty\,.$$
Then, for $g \in C_0(E)$, $t \geq 0$, 
\begin{align*}
\mathrm{Var}[\langle g, Y_t \rangle] &= \int_{L_+^{1,\circ}(E)} \langle (\mathcal{S}_{t-s}^* \mathcal{C}_q^* g)^{(p)}, \sE\alpha \rangle^2 \, \ell(\D \alpha).
\end{align*}
\end{enumerate}
\end{proposition}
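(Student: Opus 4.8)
The plan is to reduce all three parts to the general state-space results already established, using the single identity $\langle g, Y_t\rangle = \langle g, \cC_q\mathbf{X}_t\rangle = \langle \cC_q^{*}g, \mathbf{X}_t\rangle_p$, valid for every $g\in L^{\infty}(E)\supseteq C_0(E)$, together with the fact that the input operator $\cE_p$ injects into the last coordinate, so that $\langle\mathbf{h},\cE_p\sigma\rangle_p=\langle h^{(p)},\sE\sigma\rangle$ and $\cE_p^{*}\mathbf{h}=\sE^{*}h^{(p)}$ for $\mathbf{h}=(h^{(1)},\dots,h^{(p)})\in L^{\infty}(E)^p$. First I would record that the hypotheses collected in Definition~\ref{def:carma-process} — $\cA_p$ quasi-monotone with respect to $L^1_+(E)^p$, $C_j\in\pi(L^1_+(E))$, $\sE(L^1_+(E))\subseteq L^1_+(E)$, and $\ell$ concentrated on $L^1_+(E)$ satisfying Theorem~\ref{pettis} and~\eqref{eq:int-measure} — are precisely what is needed to invoke Propositions~\ref{prop:OU-mild-sol},~\ref{prop:transition-semigroup},~\ref{prop:moments-1} and~\ref{prop:properties-X} with $B=L^1(E)$, $K=L^1_+(E)$, $\mathcal{A}=\mathcal{A}_p$, $\mathcal{E}=\mathcal{E}_p$, $\mathcal{C}=\mathcal{C}_q$; in particular $K^{*}=L^{\infty}_+(E)$ generates $L^{\infty}(E)$ via $g=g^{+}-g^{-}$, and $\cC_q^{*}$ maps $L^{\infty}_+(E)$ into $L^{\infty}_+(E)^p$ since each $C_j$ is positive.

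For part (i), I would apply Proposition~\ref{prop:properties-X}~i) (equivalently, the weak integral equation~\eqref{eq:weak-solution-state-space}) with the test vector $\mathbf{g}:=\cC_q^{*}g$, which lies in $D(\cA_p^{*})$ exactly by the hypothesis of (i). Summing the coordinate equations~\eqref{eq:weak} and then collapsing the terms via the last-coordinate structure of $\cE_p$ — namely $\langle\cC_q^{*}g,\cE_p\gamma\rangle_p=\langle(\cC_q^{*}g)^{(p)},\sE\gamma\rangle$, $\langle\cC_q^{*}g,\cE_p\alpha\rangle_p=\langle(\cC_q^{*}g)^{(p)},\sE\alpha\rangle$, and the martingale term reducing to $\int_0^t\int_{L^{1,\circ}_+(E)}\langle(\cC_q^{*}g)^{(p)},\sE\alpha\rangle\tilde{N}(\D s,\D\alpha)$ — and finally rewriting $\langle\cC_q^{*}g,\mathbf{X}_t\rangle_p=\langle g,Y_t\rangle$, produces exactly the claimed decomposition; the purely-discontinuous local-martingale property of the stochastic-integral term is part of the cited proposition.

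For part (ii), I would write $\mathbb{E}[\mathrm{e}^{-\langle g,Y_t\rangle}]=\mathbb{E}[\mathrm{e}^{-\langle\cC_q^{*}g,\mathbf{X}_t\rangle_p}]=(P_t\mathrm{e}^{-\langle\cC_q^{*}g,\cdot\rangle_p})(\bm\alpha)$ and invoke the closed form of $P_t$ on exponentials in Proposition~\ref{prop:transition-semigroup}~i): for $g\in C_{0,+}(E)$ one has $\cC_q^{*}g\in (L^{\infty}_+(E))^p=(K^p)^{*}$, so~\eqref{eq:Pt-exponential} applies, and transposing the semigroup ($\langle\cC_q^{*}g,\cS_t\bm\alpha\rangle_p=\langle\cS_t^{*}\cC_q^{*}g,\bm\alpha\rangle_p$ and $\langle\cC_q^{*}g,\cS_{t-s}\cE_p\sigma\rangle_p=\langle(\cS_{t-s}^{*}\cC_q^{*}g)^{(p)},\sE\sigma\rangle$) turns it into~\eqref{eq:semigroup-R}; for general $g\in C_0(E)$ the identity persists whenever both sides are finite, by analytic continuation of $\theta\mapsto\mathbb{E}[\mathrm{e}^{-\theta\langle g,Y_t\rangle}]$ on a strip containing the origin. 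Part (iii) is then Proposition~\ref{prop:moments-1} applied to the $L^1(E)$-valued CARMA process with output operator $\cC_q$ and input operator $\cE_p$; its integrability hypotheses hold because $\langle 1,\alpha\rangle=\|\alpha\|_{L^1(E)}$ and $|\langle g,\alpha\rangle|\le\|g\|_{L^{\infty}(E)}\langle 1,\alpha\rangle$ for $\alpha\in L^1_+(E)$, so that $\int_{\{\|\alpha\|_{L^1}\ge 1\}}|\langle 1,\alpha\rangle|\,\ell(\D\alpha)<\infty$ implies $\int_{\{\|z\|>1\}}|\langle g,z\rangle|\,\ell(\D z)<\infty$ for all $g\in L^1(E)^{*}$ (and likewise for the squared condition); the formulas of Proposition~\ref{prop:moments-1} then specialize, via $\langle\cE_p^{*}\cS_{t-s}^{*}\cC_q^{*}g,\alpha\rangle=\langle(\cS_{t-s}^{*}\cC_q^{*}g)^{(p)},\sE\alpha\rangle$, to the stated expressions for $\mathbb{E}[\langle g,Y_t\rangle]$ and $\mathrm{Var}[\langle g,Y_t\rangle]$ (the latter carrying the time integral $\int_0^t(\cdot)\,\D s$).

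I do not anticipate a substantial obstacle: the statement is essentially a read-out of the Banach-space results in the concrete separable space $L^1(E)$. The points requiring care are (a) the domain requirement $\cC_q^{*}g\in D(\cA_p^{*})$ that legitimizes testing against $\cC_q^{*}g$ in (i); (b) in (ii), the positivity of $\cC_q^{*}g$, which forces restricting to $g$ in the dual cone (or the analytic-continuation step) before the exponential-moment formula can be quoted; and (c) the systematic bookkeeping that $\cE_p$ acts only in the last coordinate, so that every pairing against $\cE_p$ collapses to $\langle(\cdot)^{(p)},\sE\,\cdot\,\rangle$, together with the transfer of the Lévy-measure integrability conditions from the scalar functional $\langle 1,\cdot\rangle$ to arbitrary elements of $L^{\infty}(E)$.
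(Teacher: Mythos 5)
Your proposal is correct and follows essentially the same route as the paper, which simply states that the result ``follows from Propositions~\ref{prop:moments-1} and~\ref{prop:properties-X}''; you supply exactly the bookkeeping that citation leaves implicit (testing against $\cC_q^{*}g$, the last-coordinate collapse of $\cE_p$, and the transfer of the integrability conditions via $\langle 1,\alpha\rangle=\|\alpha\|_{L^1}$). Your parenthetical observation that the variance formula should carry the time integral $\int_0^t(\cdot)\,\D s$ is consistent with Proposition~\ref{prop:moments-1} and correctly flags an omission in the stated display.
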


Notice that the stationarity of the measure-valued CARMA processes introduced in Definition \ref{def:carma-process} follows from Proposition \ref{prop:stationary-CARMA}. 

\subsection{Examples of Parameter Sets $(\cA_p, \cB_p, \cC_q, L)$.}

In the following sections, we give some examples for the model parameters $(\cA_p, \cB_p, \cC_q, L)$.

\subsubsection{Convolution operator} Let $f \in L_{+}^{1}(E)$.   
We define the \emph{convolution operator} $T \colon L^1(E) \rightarrow L^1(E)$ by
\[
(T\alpha)(x) = (\alpha * f)(x) = \int_E f(x - y)\,\alpha(y)\, \lambda(\mathrm{d}y), \quad \text{for } \alpha \in L^1(E),
\]
where we assume $E$ has a
group structure where addition is defined. The space $L^1(E)$, equipped with the convolution product, forms a \emph{Banach algebra}. Moreover, by \emph{Young's inequality}, it holds that:
\[
\|\alpha * f\|_{L^1} \leq \|\alpha\|_{L^1} \|f\|_{L^1}.
\]
Hence, $T$ is a \emph{bounded linear operator} on $L^1(E)$ and maps non-negative functions to non-negative functions, i.e., $T(L_+^1) \subset L_+^1$, so it is \emph{positive}.

We now construct a \emph{strongly continuous semigroup} $(T_t)_{t \geq 0}$ of convolution type using the exponential formula (See \cite{EN00} for general properties of semigroups for bounded operators)
$$
T_t \alpha = \alpha * f_t, \qquad t \geq 0,
$$
where
$$
f_t = \sum_{n=0}^\infty \frac{t^n}{n!} f^{*n}, \quad f^{*0} := \delta_0, \quad f^{*n} := f * f^{*(n-1)} \text{ for } n \geq 1.
$$
Note that the identity element $T_0 = \mathrm{Id}$ is understood in the \emph{weak (distributional) sense}, since $\delta_0 \notin L^1(E)$. More precisely, we have
$$
\lim_{t \to 0^+} T_t \alpha = \alpha \quad \text{in } L^1(E),
$$
for all $\alpha \in L^1(E)$.
The family $(T_t)_{t \geq 0}$ then defines a \emph{strongly continuous semigroup} on $L^1(E)$, and it satisfies the abstract Cauchy problem
$$
\frac{\mathrm{d}}{\mathrm{d}t} T_t \alpha = T T_t \alpha = T_t T \alpha, \quad T_0 = \text{Id}.
$$
An example of the operators $A_i\colon L^1_+(E) \rightarrow L^1_+(E)$, $i=1,\ldots, p$, defining the matrix operator $\cA_p$ in \eqref{eq:CARMA-banach} would be the convolution operator. Similarly, the operators $\mathbf{C}_i \colon L^1_+(E) \rightarrow L^1_+(E)$, $i=0, \ldots p-1$, can be chosen as convolution operators. 

 \subsubsection{A L\'evy subordinator in $L^{1}_{+}(\mathbb{R}^d)$}\label{example2} Let $N(\D s, \D x, \D u)$ be a Poisson random measure on $\text{Bor}([0,\infty))\otimes \text{Bor}(\mathbb{R}^d)\otimes\text{Bor}((0,\infty))$ with intensity $\D s \otimes \lambda(\D x) \otimes \pi(\D u)$, where 
$\lambda(\D x)$ is a finite positive measure on $\text{Bor}(\mathbb{\mathbb{R}}^d)$ representing the spatial distribution that dictates where jumps can occur in space (i.e., over $\mathbb{R}^d$),
and $\pi(\D u)$ is a L\'evy measure on $\text{Bor}((0, \infty))$, describing the jump size distribution at each spatial location. Assume that 
$$\int_0^\infty  (u \wedge 1)  \, \pi(\D u) <\infty\,.$$
We consider a L\'evy process, for $t\geq 0$,
\begin{align}\label{eq:lev-example}
L_t(x) &= t a \mathbf{1}_{[0,y_1] \times \ldots\times {[0,y_d]}}(x)\nonumber\\
& \qquad + \int_0
^t\int_{\mathbb{R}^d}\int_0^\infty u\mathbf{1}_{[0,y_1] \times \ldots\times {[0,y_d]}}(x-y) N(\D t, \D y, \D u)\,,
\end{align}
where $a \in \mathbb{R}_+$, $\mathbf{1}_{A}$ is the indicator function of $A$ and $y_1, \ldots, y_d \in \mathbb{R}^d$.

Let $\phi \in L^1(\mathbb{R}^d)$, with $\phi \geq 0$. Then we can also define the L\'evy process by replacing the indicator function in \eqref{eq:lev-example} by such a $\phi$. Specifically 
\begin{align}\label{eq:levy-example1}
L_t(x) = t a \phi(x)+ \int_0
^t\int_{\mathbb{R}^d}\int_0^\infty u\phi(x-y) N(\D t, \D y, \D u)\,,
\end{align}

It follows that the Laplace transform of $(L_t)_{t\geq 0}$ is given, for all $g \in L^\infty(\mathbb{R}^d)$ by 
$$\mathbb{E}[\exp(-\langle g, L_t\rangle)] = \exp\left\{-t\left(\langle g, a \phi\rangle-\int_{\mathbb{R}^d}\int_0^\infty  (1- \e^{\langle g, u \phi(\cdot -y)\rangle})\, \pi(du)\,\lambda(\D y)\right)\right\}\,.$$
The L\'evy processes in \eqref{eq:levy-example1} and in \eqref{eq:lev-example} are $L^1_+(\mathbb{R}^d)$-valued L\'evy subordinators. Notice that  $\phi$ in \eqref{eq:levy-example1} represents the spatial distribution of the jumps and $u$ the intensity of the jumps. 
 \subsubsection{A L\'evy subordinator in $M_+(\mathbb{R}^d)$ with finite-dimensional noise}\label{example3} 
 Let $N(\D s, \D x, \D u)$ be a Poisson random measure on $\text{Bor}([0,\infty))\otimes \text{Bor}(\mathbb{R}^d)\otimes\text{Bor}((0,\infty))$ with intensity $\D s \otimes \lambda(\D x) \otimes \pi(\D u)$, where 
$\lambda(\D x)$ and $\pi(\D u)$  are as in Example \ref{example2}.

We consider a L\'evy process given by
\begin{equation}\label{eq:levy-example}
L_t = \int_0
^t\int_{\mathbb{R}^d}\int_0^\infty u\delta_x N(\D t, \D x, \D u)\,, \qquad t\geq 0\,.
\end{equation}
where $\delta_x$ is the Dirac measure on $\mathbb{R}^d$.
It follows that the Laplace transform of $(L_t)_{t\geq 0}$ is given, for all $g \in C_{0, +}(\mathbb{R}^d)$ by 
$$\mathbb{E}[\exp(-\langle g, L_t\rangle)] = \exp\left\{t\left(\int_{\mathbb{R}^d}\int_0^\infty  (1- \e^{-u g(x)})\, \pi(du)\,\lambda(\D x)\right)\right\}\,.$$
The L\'evy process in \eqref{eq:levy-example} is an $M_+(\mathbb{R}^d)$-valued L\'evy subordinator with finite-dimensional noise.
\subsubsection{An $M_+(E)$-valued Poisson process with finite-dimensional noise}\label{Levy-probability-measure} Let $\pi_0 =\delta_{z_0}$, for $z_0 \in \mathbb{R}_+$. Let $\varrho \in \mathbb{R}_+$ and $\Phi\colon \mathbb{R}_+\rightarrow M_+(E)$ given by $\Phi(z) = \mu z$, for $\mu \in M_+(E)$. 
Let $N(\D s, \D\nu)$ be a Poisson random measure on $\text{Bor}([0,\infty)) \otimes \text{Bor}(M_+(E))$ with intensity measure $\D s \otimes \pi(\D \nu)$, where $\pi(\D \nu)= \varrho \pi_0(\Phi^{-1}(\D \nu))$ is a L\'evy measure on $M_+(E)$. 
Let 
$$L_t =  \int_0^t \int_{M_+(E)} \nu N(\D s, \D\nu), \qquad t\geq 0\,.$$
Then $(L_t)_{t\geq 0}$ is another example of an $M_+(E)$-valued L\'evy subordinator with Laplace transform, for $g\in C_{0,+}(E)$, $t\geq 0$, 
$$\mathbb{E}[\e^{-\langle g, L_t\rangle}] = \exp\left(-t \varrho (1-\e^{-z_0\langle g, \mu\rangle})\right)\,. $$
I.e., this is an $M_+(E)$-valued Poisson process with jump intensity $\varrho$ and jumps fixed to be of size $\mu z_0$, for $z_0 \in \mathbb{R}_+$.

\subsection{Examples of L\'evy-Driven Measure-Valued CARMA($p,q$) Processes}\label{sec:examples}
\subsubsection{CAR($p$) in $L^{1}_{+}(E)$}
Let $p \in \mathbb{N}$. We define the projection on the $i$th coordinate as $\cP_{i}\colon L^{1}_{+}(E)^p\rightarrow
L^1_+(E)$, i.e., $\cP_i\mathbf{\bm \alpha}=\nu^{i}$, for $\bm \alpha\in L^1_+(E)^p$ and $i=1,\ldots,p$. Let $(\bm X_t)_{t\geq 0}$ be as in \eqref{eq:CARMA-banach}.
An {\it $L^1_+(E)$-valued continuous-time autoregressive process with parameter $p$} (in short referred to as CAR$(p)$) process is given by 
$$\cP_1 \bm X_t, \qquad t\geq 0\,.$$
The processes CAR$(p)$ in $L^1_+(E)$ constitute a subclass of the CARMA$(p,q)$ processes in $L^1_+(E)$.
\subsubsection{$\mathbb{R}$-valued L\'evy-driven CARMA$(p,q)$ process}

 In the matrix operator $\mathcal A_p$ in \eqref{eq:Ap-Banach}, let $A_i=-a_i\mathbb I$, $i=1,...,p$, where $a_i$ are positive real numbers and $\mathbb I$ is the identity operator on $L^1(\mathbb R^d)$. Then, $\mathcal A_p$ is a bounded linear operator and for $(L_t)_{t\geq 0}$ being a L\'evy process in $L^1(\mathbb R^d)$, it holds that $L^g_t:=\langle g,L_t\rangle$ is a L\'evy process on $\mathbb R$ with a L\'evy measure $\ell(g^{-1}(\cdot))$, for $g\in L^\infty(\mathbb R^d)$. Take now a Borel set $D\subset\mathbb R^d$, and define $\mathbf{g}=({\bf 1}_D,...,{\bf 1}_D)$. From the analytic weak solution, we find that (denoting $\mathbf{e}_p$ the standard $p$th basis vector in $\mathbb R^p$)
$$
\int_{D}\D \mathbf{X}_t(x )\, \D x=A_p\int_{D} \mathbf{X}_t(x )\, \D t\,\D x+ \int_D\mathbf{e}_p \,  \D L_t(x)\, \D x\,,
$$
for $A_p$ being the classical CARMA-matrix, i.e.,
\begin{align}
\label{carma-matrix-classical}
  A_{p}\df
  \begin{bmatrix}
    0 & 1 & 0 & \ldots & 0 \\
    0 & 0 & 1 & \ddots & \vdots \\
    \vdots & \vdots & \ddots   & \ddots & 0 \\
    0 & \ldots & \ldots & 0 & 1     \\
    -a_{p} & -a_{p-1} & \ldots & \ldots & -a_{1}
  \end{bmatrix}\,. 
\end{align}
Letting the operator $\cC_q\colon L^1(E)^p \to L^1(E)$ in \eqref{eq:Cq-Banach} be given by the identity operators $\mathbb I$ scaled by reals $c_i$, $i=0, \ldots,p-1$, i.e., $\cC_q = (c_0\mathbb I, c_1\mathbb I,\ldots, c_{p-1}\mathbb I)$, where  $c_q\neq 0$ and $c_j=0$, for $q+1 \leq j\leq p-1$. Let ${\bf c}_q = (c_0,\ldots, c_{p-1})$.
We find that $Y_t=\langle \mathbf{c}_q, \int_D\mathbf{X}_t(x)\, \D x\rangle_{\mathbb{R}^p}$ is a classical L\'evy-driven CARMA($p,q$)-process. So, in the signed-measure case, we recover a classical CARMA by evaluating our measure-valued CARMA in a set $D$. To ensure positivity, we need to assume that the measure-valued L\'evy process $(L_t)_{t\geq 0}$ is a subordinator on the cone of measures. Moreover we need to assume conditions on the $a_i$'s that ensure the positivity of the matrix $A_p$. We refer to \cite{BK23} for more on this.

\subsubsection{CARMA(p,q) processes in $L^1_+(E)$ and multi-parameter CARMA random fields}\label{sec:random-filed} 

 We introduce a class of multi-parameter CARMA processes from evaluating the measure-valued CARMA process on indicator functions. The obtained multi-parameter process is contrasted with the class of CARMA random fields proposed and analysed in \cite{pham2020levy}. Let $\cA_p$, $\cB_p$ be respectively as in \eqref{eq:Ap-Banach}, \eqref{eq:Bp-Banach}, and $(L_t)_{t\geq 0}$ be as in \eqref{eq:levy-example1}.

Let $(\bm X_t)_{t\geq 0}$ be given by
\begin{align}\label{eq:x-MRd}
d{\bm X}_t 
=\mathcal{A}_p \bm{X}_t\, dt  +  \mathcal{E}_p dL_t\,\quad t\geq 0\, \qquad \bm X_0=\bm \alpha \in L^1_+(\mathbb{R}^d)^p\,.
\end{align}

Let $(\cS_t)_{t\geq 0}\colon L^1_+(\mathbb{R}^d)^p \rightarrow L^1_+(\mathbb{R}^d)^p$ be the semigroup generated by $\cA_p$. Then solving \eqref{eq:x-MRd}, yields 
\begin{equation}
    \bm X_t = \cS_t \bm \alpha + \int_0^t\cS_{t-s}\cB_p\, \D L_s\,, \qquad t\geq 0\,.
\end{equation}

Take $g(x_1, \ldots, x_d)= \mathbf{1}_{[0,t_1]}(x_1)\mathbf{1}_{[0,t_2]}(x_2)\ldots \mathbf{1}_{[0,t_d]}(x_d)$, for $t_d \leq\ldots\leq t_2\leq t_1$.
Then it holds 
\begin{align}\label{eq:OU-mean-field}
\mathbb{X}(t, t_1, \ldots, t_d) &:= \int_0^{t_1}\ldots \int_0^{t_d} \bm X_t(\D\bm y)\nonumber\\ 
&= \int_0^{t_1}\ldots\int_0^{t_d}\cS_t \bm{\alpha}(\D \bm y)
+ \int_0^t\int_0^{t_1}\ldots\int_0^{t_d}\cS_{t-s}\mathcal{E}_p \,\D L_u(\D\bm y)\,.
\end{align}
Let $\bm c_q= (c_0, \ldots, c_{p-1}) \in \mathbb{R}^p$, where $c_j$, satisfy $c_q \neq 0$ and $c_j =0$, for $q< j\leq p$.
We define {\it a random field 
CARMA$(p,q)$} as 
$$\mathbb{Y}(t, t_1,\ldots, t_d) =\langle \bm c_q ,\mathbb{X}(t,t_1, \ldots, t_d)\rangle_{\mathbb{R}^p}.$$ 

Our CARMA random field process is different from the one introduced in \cite{pham2020levy}. Indeed our semigroup $(\cS_t)_{t\geq 0}$ is operating on measures while, in the notation of \cite[equation 3.3]{pham2020levy}, $\e^{A_i(t)}$, $i=1,\ldots, d$, are matrices. In addition, integration on the right-hand side of \eqref{eq:OU-mean-field} is considered with respect to a L\'evy process in $L^1_+(\mathbb{R}^d)$, while integration in \cite[equation 3.3]{pham2020levy} is with respect to a L\'evy basis. 
Note that our definition of the CARMA random field is not limited to Lévy processes of the form \eqref{eq:levy-example1}; it can be similarly defined for any Lévy process in $L^1_+(\mathbb{R}^d)$. 

\subsubsection{Measure valued CARMA processes driven by a finite-dimensional noise}
Let $\mathcal{A}_p, \mathcal{E}_p, \mathcal{C}_q$ be respectively as in \eqref{eq:Ap-Banach}, \eqref{eq:Bp-Banach} and~\eqref{eq:Cq-Banach}, where we specify the Banach space $B$ to be the space of measures $M(\mathbb{R}^d)$. Assume $\mathcal{A}_p \colon D(\cA) \subset M_+(\mathbb{R}^d)^p \rightarrow M_+(\mathbb{R}^d)^p$ is the generator of a strongly continuous, quasi-positive operator semigroup $(\mathcal{S}_t)_{t \geq 0}$, $C_j \in \pi(M_+(\mathbb{R}^d))$, for all $j\in \{0,\ldots, q\}$, and $\sE(M_+(\mathbb{R}^d)) \subseteq M_+(\mathbb{R}^d)$. Let $(L_t)_{t \geq 0}$ be a $M_+(\mathbb{R}^d)$-valued Lévy process in $M(\mathbb{R}^d)$ as specified in Example \ref{example3} or in Example \ref{Levy-probability-measure}. 
Let $(\bm X_t)_{t\geq 0}$ and $(Y_t)_{t\geq 0}$ be given by 
 \begin{equation*}
\D \mathbf{X}_t = \mathcal{A}_p \mathbf{X}_t \, \D t + \mathcal{E}_p \, \D L_t, \quad Y_t = \mathcal{C}_q \mathbf{X}_t,
\end{equation*}
 where $\mathbf{X}_0 = \bm \nu\in M_+(\mathbb{R}^d)^p$. The existence of $(\bm X_t)_{t\geq 0}$ is guaranteed by Proposition \ref{prop:OU-mild-sol} and Remark \ref{rem:finite-dim-noise}. The process $(Y_t)_{t\geq 0}$ is {\it a pure-jump measure-valued CARMA$(p,q)$} process with parameter set $(\mathcal{A}_p, \mathcal{E}_p, \mathcal{C}_q, L)$ driven by finite-dimensional noise.
 
 We can further specify $\cA_p$ and $\cE_p$ to be of convolution type, similar to the $L^1(\mathbb{R}^d)$ case, where {\it a convolution operator} in the space of measures is defined as $T_\mu \colon M_+(\mathbb{R}^d) \rightarrow M_+(\mathbb{R}^d)$,
for some fixed measure $\mu \in M_+(\mathbb{R}^d)$, where $$(T_\mu \nu)(B) = \mu * \nu(B) = \int_{\mathbb{R}^d}\nu(B- x)\,\mu(\D x)\,,$$
for all $B \in \text{Bor}(\mathbb{R}^d)$. Notice that $T_\mu$ is a bounded operator as $\nu$ and $\mu$ are assumed to be finite measures. 
\subsubsection{Measure-valued ambit fields}
Our CARMA processes can be viewed as a special class of ambit fields. 

Let $\mathcal{A}_p, \mathcal{E}_p, \mathcal{C}_q$ be respectively as in \eqref{eq:Ap-Banach}, \eqref{eq:Bp-Banach} and~\eqref{eq:Cq-Banach}, where we specify the Banach space $B$ to be $(L^1(\mathbb{R}^d), \|\cdot\|_{L^1})$. Let $(\cS_{t})_{t\geq 0}$ be the semigroup generated by $\cA_p$ and $(L_t)_{t\geq 0}$ be an $L^1_+(E)$-valued L\'evy process with characteristic triplet $(\gamma, 0, \ell)$, where $\gamma \in L^1_+(E)$ and $\ell$ is the Lévy measure concentrated on $L^1_+(E)$ satisfying \eqref{eq:int-measure} and Theorem~\ref{pettis} with the cone $K$ being  $L^1_+(E)$. Let $N(\D s, \D \nu)$ denote the Poisson random measure associated with the jumps of $(L_t)_{t \geq 0}$ and $\tilde{N}(\D s, \D \alpha)$ be the compensated Poisson random measure. From \eqref{eq:variation-of-constant} and the fact that $Y_t =  \cC_q \bm X_t$, it holds for $\bm \alpha \in L^1_+(E)^p$,
\begin{align*}
Y_t&=\cC_q\cS_t \bm \alpha+\int_0^t\cC_q\cS_{t-s}\cB_p\gamma \D s+\int_0^t\int_{\set{\alpha\in L_+^{1,\circ}(E)\colon 0<\norm{\alpha}_{L^1}\leq 1}}\cC_q\cS_{t-s}\cB_p \alpha\tilde{N}(\D s,\D \alpha)\nonumber\\
    &\quad +\int_0^t\int_{\set{\alpha \in L_+^{1,\circ}(E)\colon \norm{\alpha}_{L^1}> 1}} \cC_q\cS_{t-s}\mathcal{E}_p\alpha N(\D s,\D \alpha).
\end{align*}
Notice that the Poisson and the compensated Poisson random measures $N$ and $\tilde{N}$ qualify as L\'evy bases on $[0,T] \times L^1_+(E)$ according to \cite[Definition 25]{BNBV18}.
Therefore, as the stochastic integrals are defined in the weak Pettis sense (see the proof of Proposition \ref{prop:OU-mild-sol}), we have for $g \in L^\infty(E)$,
\begin{align}\label{eq:ambit-field}
\langle g, Y_t\rangle&=\langle g, \cC_q\cS_t\bm \alpha\rangle +\int_0^t\langle g, \cC_q\cS_{t-s}\cB_p\gamma \rangle\,\D s \nonumber\\
&\qquad \qquad +\int_{0}^{t} \int_{\set{\alpha\in L_+^{1,\circ}(E)\colon 0<\norm{\alpha}_{L^1}\leq 1}}\langle g,\mathcal C_q S_{t-s}\cB_p\alpha\rangle\tilde{N}(\D s,\D \alpha) \nonumber\\
&\qquad\qquad+\int_0^t\int_{\set{\alpha\in L_+^{1,\circ}(E)\colon \norm{\alpha}_{L^1}> 1}}\langle g,\mathcal C_q\cS_{t-s}\cB_p\alpha\rangle N(\D s,\D \alpha) 
\end{align}
is a real-valued {\it ambit field}  without any volatility modulation for every $g \in L^\infty(E)$ and $(t,A) \mapsto \int_A Y_t \, \D x$ is {\it a measure-valued ambit field} (see \cite[Definition 44]{BNBV18} for the definition of ambit fields). When we assume $(\cS_t)_{t\geq 0}$ is a quasi-positive operator, $C_j \in \pi(L^1_+(E))$, for all $j\in \{0,\ldots, q\}$, and $\sE(L^1_+(E)) \subseteq L^1_+(E)$, then $(Y_t)_{t\geq 0}$ is an $L^1_+(\mathbb{R}^d)$-valued process.

Based on the above discussions, we provide here an outlook to a generalisation of measure-valued CARMA processes to what we call {\it measure-valued ambit processes}. First, let us extend the noise in \eqref{eq:ambit-field} and consider martingale valued measures $U$ on $[0,T] \times L^1_+(E)$ as introduced in \cite[Section 2]{RvG09}. Here we assume that $U$ satisfies all conditions (a)--(f) of Section 2 in \cite{RvG09}. Notice that a real-valued L\'evy basis on $[0,T]\times L^1_+(E)$ is an infinitely-divisible martingale valued measure (see \cite[Definition 25]{BNBV18}). 
Let $\rho \colon \mathbb{R}_+ \times L^1_+(E) \to \mathbb{R}$ be the square mean measure associated with $U$, the existence of which is guaranteed by condition (f) in Section 2 of \cite{RvG09}.
Furthermore let $\mathcal G:[0,T]^2\times L^1_+(E)\mapsto L^1_+(E)$ be such that 
$$\int_{[0,T] \times L^1_+(E)} \langle 1, \cG(t,s,\alpha) \rangle^2 \, \rho(\D s, \D \alpha) <\infty\,.$$
Then according to \cite[Theorem 3.6]{RvG09}, there exists an $L^1_+(E)$-valued process $(Y_t)_{t\geq 0}$ such that for all $g \in L^\infty(E)$, we have 
\begin{align*}
\langle g, Y_t\rangle  = \int_{[0,t]\times L^1_+(E)} \langle g, \cG(t,s,\alpha)\rangle\, U(\D s , \D \alpha)\,, \qquad \text{a.s.}\,
\end{align*}
Then the process $(t,A) \mapsto \int_A Y_t(x) \, \D x$ is a measure-valued ambit field.

\section{Calculating expectation functionals}\label{sec:appication-expectation}

In many applications, one is interested in computing expectation functionals,
i.e., nonlinear mappings of the process in question. For example, one can think
of pricing an option on flow forwards $F(\tau, \tau_1, \tau_2)$, $\tau\leq \tau_1<\tau_2$, as introduced in \eqref{eq:flow-forward}. This entails in computing a (possibly
risk-adjusted) expected value of the payoff of the option, given as a function 
\begin{align}\label{eq:future}
\Upsilon(F(\tau,\tau_1,\tau_2))
\end{align}
at some exercise time $\tau$, where $\Upsilon\colon \mathbb{R}\to \mathbb{R}$ is the payoff function. 
Another example is computing the expected income from power production from a wind or solar power plant, given by some (possibly non-linear) map of the wind or solar irradiation field over an area, over a span of time, where the wind or irradiation field is modelled by measured-valued processes. We refer to the discussion in Section \ref{sec:intro} for further motivations on the relevance of such expectation functionals.   

To this end, consider a CARMA process $(Y_t)_{t\geq 0}$ in a separable Banach space $B$ as introduced in Definition \ref{def:carma-process1}. We assume that the parameters satisfy the conditions outlined in Proposition \ref{prop:pos-necessary-car} to ensure that the CARMA process remains in the cone $K$. For an element $h\in K^*$ and a mapping $\Upsilon:\mathbb R\rightarrow\mathbb R$, we derive in the next proposition, an expression for the expectation functional
\begin{equation}\label{eq:expectation-functional}
\Pi_t:=\mathbb E[\Upsilon(\langle h,Y_{\tau}\rangle)\,\vert\,\mathcal F_t]\,,
\end{equation}
for $0\leq t\leq\tau$. This proposition is applicable when $B$ is the space of absolutely continuous measures, as relevant to our applications. Additionally, it can be applied to 
$M_+(E)$ when considering finite-dimensional noise, as discussed in Remark \ref{rem:finite-dim-noise}.  

\begin{proposition}\label{prop:Fourier-transform} Let $\cA_p$, $\cB_p$, $\cC_q$, and $(L_t)_{t\geq 0}$ be as in Definition \ref{def:carma-process1}.  
Assume $\cA_p$ to be quasi-positive with respect to $K^p$, $C_j \in \pi(K)$, for all $j\in \{0,\ldots, q\}$, and $\sE(K) \subseteq K$. Let $(L_t)_{t \geq 0}$ be a $K$-valued Lévy process in $B$ with characteristic triplet $(\gamma, 0, \ell)$, where $\gamma \in K$ and $\ell$ is the Lévy measure concentrated on $K$ satisfying Theorem~\ref{pettis}.

Assume $\Upsilon$ can be expressed as 
\begin{align}\label{eq:Fourier-representation}
\Upsilon(x) = \int_\MR \mathrm{e}^{(a+\mathrm{i}y)x} \hat{\Upsilon}(y) \, \D y\,,
\end{align}
for a function $\hat{\Upsilon}\in L^1(\MR)$ and some $a \in \MR$ such that $\mathbb{E}[\exp\{a\langle h, Y_\tau\rangle\}]<\infty]$, for all $h \in K^{*}$. Then we have for $h \in K^{*}$,
\begin{align*}
\Pi_t &= \int_\MR \exp\left\{-\langle \cS_t^*\mathcal{C}_q^*(a+iy)h, {\bf X}_t\rangle - \int_0^{\tau -t}\langle \mathcal{E}^*_p\cS_{\tau-s}^*\mathcal{C}_q^*(a+iy)h, \gamma\rangle\, \D s\right\}\\
&\qquad \exp\left\{\int_0^{\tau -t}\int_{K}(\e^{-\langle \mathcal{E}^*_p\cS_{\tau -s}^*\mathcal{C}^*_q (a+iy)h, \nu\rangle}-1)\,\ell(\D \nu)\, ds\right\}\hat{\Upsilon}(y)\, \D y\,.
\end{align*}
\end{proposition}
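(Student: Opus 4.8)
The plan is to exploit the Fourier representation~\eqref{eq:Fourier-representation} to rewrite $\Pi_t$ as an integral of conditional exponential moments of $\langle h,Y_\tau\rangle$, each of which is computed in closed form via the transition semigroup of the underlying state process. Since $Y_\tau=\mathcal C_q\mathbf X_\tau$ we have $\langle h,Y_\tau\rangle=\langle\mathcal C_q^{*}h,\mathbf X_\tau\rangle_p$, and the positivity hypotheses ($C_j\in\pi(K)$, $\sE(K)\subseteq K$, $\mathcal A_p$ quasi-positive) give $\mathcal C_q^{*}h\in(K^p)^{*}$, so in particular $\langle\mathcal C_q^{*}h,\mathbf X_\tau\rangle_p\ge 0$ $\mathbb P$-a.s. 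The process $(\mathbf X_t)_{t\ge 0}$ is a time-homogeneous Markov process whose transition semigroup $(P_t)$ is the one of Proposition~\ref{prop:transition-semigroup}, whence
\[
\mathbb E\!\left[\mathrm e^{-\langle\mathbf g,\mathbf X_\tau\rangle_p}\mid\mathcal F_t\right]=\bigl(P_{\tau-t}\,\mathrm e^{-\langle\mathbf g,\cdot\rangle_p}\bigr)(\mathbf X_t),\qquad 0\le t\le\tau,
\]
for admissible $\mathbf g$. The stated formula will then come out by taking $\mathbf g=-(a+\mathrm i y)\mathcal C_q^{*}h$ in~\eqref{eq:Pt-exponential}, transferring the operators onto their adjoints via $\langle\mathbf g,\mathcal S_r\mathcal E_p(\cdot)\rangle_p=\langle\mathcal E_p^{*}\mathcal S_r^{*}\mathbf g,\cdot\rangle$, and relabelling the time-integration variable.

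First I would justify exchanging $\mathbb E[\cdot\mid\mathcal F_t]$ with $\int_{\mathbb R}(\cdot)\,\mathrm d y$. As $\langle h,Y_\tau\rangle\ge 0$ and $\bigl|\mathrm e^{(a+\mathrm i y)\langle h,Y_\tau\rangle}\hat\Upsilon(y)\bigr|=\mathrm e^{a\langle h,Y_\tau\rangle}\,|\hat\Upsilon(y)|$, the conditional Fubini theorem applies because $\mathbb E\bigl[\mathrm e^{a\langle h,Y_\tau\rangle}\bigr]\,\|\hat\Upsilon\|_{L^1(\mathbb R)}<\infty$ by hypothesis (so the conditional expectation of $\mathrm e^{a\langle h,Y_\tau\rangle}$ is a.s.\ finite). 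This yields $\Pi_t=\int_{\mathbb R}\mathbb E\bigl[\mathrm e^{(a+\mathrm i y)\langle h,Y_\tau\rangle}\mid\mathcal F_t\bigr]\,\hat\Upsilon(y)\,\mathrm d y$, and it remains to evaluate the conditional functional appearing under the integral.

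The key difficulty is that~\eqref{eq:Pt-exponential} is established only for \emph{real} $\mathbf g\in(K^p)^{*}$, whereas $\mathbf g=-(a+\mathrm i y)\mathcal C_q^{*}h$ is a complex multiple of $\mathcal C_q^{*}h$. I would remove this restriction by analytic continuation. Set $\Psi(z):=\mathbb E\bigl[\mathrm e^{z\langle\mathcal C_q^{*}h,\mathbf X_\tau\rangle_p}\mid\mathcal F_t\bigr]$. Since $\langle\mathcal C_q^{*}h,\mathbf X_\tau\rangle_p\ge 0$ and $\Psi(a)<\infty$ a.s., $\Psi$ is holomorphic on the half-plane $\{\Re z<a\}$ and continuous on $\{\Re z\le a\}$ by dominated convergence. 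On the other hand, substituting $\mathbf g=-z\mathcal C_q^{*}h$ into the right-hand side of~\eqref{eq:Pt-exponential} gives a function of $z$ that is likewise holomorphic on $\{\Re z<a\}$ and continuous up to $\{\Re z\le a\}$; the only nontrivial check is that the inner integral $\int_K\bigl(\mathrm e^{-z\langle\mathcal E_p^{*}\mathcal S_r^{*}\mathcal C_q^{*}h,\nu\rangle}-1\bigr)\ell(\mathrm d\nu)$ converges locally uniformly there, which follows from Pettis-integrability of $\ell$ near the origin (bounding the integrand by $C|z|\,|\langle\,\cdot\,,\nu\rangle|$) together with the fact that the moment hypothesis, read through the real-argument Laplace identity of Proposition~\ref{prop:transition-semigroup}, forces $\int_{\{\|\nu\|>1\}}\mathrm e^{a\langle\mathcal E_p^{*}\mathcal S_r^{*}\mathcal C_q^{*}h,\nu\rangle}\,\ell(\mathrm d\nu)<\infty$. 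The two functions agree on the real segment $z\le 0$ by Proposition~\ref{prop:transition-semigroup}, hence on the connected open set $\{\Re z<a\}$ by the identity theorem, and finally at $z=a+\mathrm i y$ by continuity. Inserting $z=a+\mathrm i y$, rewriting via the adjoint semigroup and the drift $\gamma$, and plugging back into the $y$-integral produces the claimed expression for $\Pi_t$.

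I expect this analytic-continuation step to be the main obstacle — more precisely, establishing holomorphy and boundary continuity of the right-hand side of~\eqref{eq:Pt-exponential} as a function of the complexified test element, i.e.\ the locally uniform convergence of the $\ell(\mathrm d\nu)$-integral for $\Re z\le a$. The Fourier inversion, the conditional Fubini step, the Markov property, and the bookkeeping with adjoint operators are routine.
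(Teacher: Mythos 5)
Your proposal follows essentially the same route as the paper's proof: the Fourier representation of $\Upsilon$, a conditional Fubini exchange, and then evaluation of $\mathbb E[\exp(-\langle -(a+\mathrm i y)\mathcal C_q^{*}h,\mathbf X_{\tau}\rangle_p)\mid\mathcal F_t]$ via the Laplace-transform formula \eqref{eq:Pt-exponential} of Proposition~\ref{prop:transition-semigroup}. The one place you go beyond the paper is the analytic-continuation argument extending \eqref{eq:Pt-exponential} from real test elements in $(K^p)^{*}$ to the complex multiples $-(a+\mathrm i y)\mathcal C_q^{*}h$; the paper compresses this into ``the result follows from Proposition~\ref{prop:transition-semigroup}'', so your added justification is a welcome filling-in of an implicit step rather than a different method.
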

\begin{proof}
From the representation of $\Upsilon$ we have
$$
\Upsilon(\langle h,Y_{\tau}\rangle)=\int_{\MR}\exp((a+iy)\langle h,Y_{\tau}\rangle))\hat{\Upsilon}(y)\,\D y\,.
$$
From the assumption of the proposition, we find appealing to Fubini's Theorem that 
\begin{align*}
    \mathbb E[\Upsilon(\langle h,Y_{\tau}\rangle)\mid \cF_t]&=\int_{\MR}\mathbb E[\exp((a+iy)\langle h,Y_{\tau}\rangle)\,\vert\,\mathcal F_t]\hat{\Upsilon}(y) \,\D y \\
    &=\int_{\MR}\mathbb E[\exp(-\langle -(a+iy)\mathcal C_q^*h,{\bf X}_{\tau}\rangle_p)\,\vert\,\mathcal F_t]\hat{\Upsilon}(y)\, \D y.
\end{align*}
The result follows from Proposition \ref{prop:transition-semigroup}.
\end{proof}

Note that if $x\mapsto\Upsilon(x)\exp(-ax)$ is integrable on $\MR$ with a Fourier transform also being integrable, then, by the Fourier inversion formula, 
$$
\Upsilon(x)=e^{ax}e^{-ax}\Upsilon(x)=e^{ax}\frac1{2\pi}\int_{\MR}e^{ixy}\widehat{\Upsilon}_a(y)\, \D y\,,
$$
where $\widehat{\Upsilon}_a$ is the Fourier transform of $x\mapsto e^{-ax}\Upsilon(x)$. Hence, $\Upsilon$ has a representation as in \eqref{eq:Fourier-representation}, where
$\hat{\Upsilon}(y)=\widehat{\Upsilon}_a(y)/2\pi$.

\begin{remark}
When pricing financial derivatives, such as options written on power futures, one usually resorts to the arbitrage-free pricing theory and asks for a risk-neutral probability which turns the forward price dynamics into a martingale. The price is given as the expectation operator of the payoff under this risk-neutral probability. However, this approach rests on the fact that the futures are liquidly tradeable. For example, in many organized markets, the futures are not necessarily very liquid, and entering such a contract may very well lock you in the position. In particular, having in mind OTC-contracts such as power purchase agreements or other production and weather-linked derivatives, the  situation is often that this is a position that might be hard to reverse. As such, the derivative should be priced under a risk-adjusted measure, which is not related to any replicating strategy and martingale condition of the underlying, but measuring risk-compensation inherit in the contract. Thus, if modelling the underlying dynamics by a measure-valued CARMA processes, one may ask for a class of equivalent measures that can be used for this risk-adjustment. We discuss here the Esscher transform, adopted from insurance mathematics, see, e.g., \cite{escher1932probability, gerber1993option, kallsen2002cumulant}, which turns out to be a measure change preserving the measure-valued CARMA structure but modifying the parameters in the model. 
\end{remark}

\subsection{Change of measure}\label{sec:change-of-measure}
Let $\cB\colon K \rightarrow K^p$. Let $\ell$ be a L\'evy measure with support in $K$.
Define 
\begin{equation}\label{eq:measure-LB}
  \ell^{\cB}(A) = \ell\{\nu \in K \mid \cB(\nu) \in A\}, \quad  A \subset \mbox{Bor}(K^p)\,.
\end{equation}
Let $(\mathbfcal{L}_t)_{t\geq 0}$ be a  L\'evy process in $K$ with characteristics $(0,0, \ell^\cB)$.  
Hence for $\bm{\theta}\in (B^*)^p$ such that 
\begin{align}\label{eq:exp-moments}
\int _{K}\mathrm{e}^{\langle \bm{\theta}, \bm{\nu}\rangle_p}\ell^{\mathcal{E}}(d\bm{\nu})<\infty\,,
\end{align}
we introduce
\begin{align*}
    Z_t^{\bm{\theta}} = \exp\left\{ \langle \bm{\theta},\mathbfcal L_t\rangle_p- t \int_{K^p}(\mathrm{e}^{\langle \bm{\theta}, \bm{\nu}\rangle_p}-1)\, 
\ell^{\mathcal{E}}\,(\D \bm \nu)\right\}\,.
\end{align*}
Denote by $N^\mathcal{E} (\D s, \D \bm \nu)$ the Poisson random measure on $\mbox{Bor}([0,\infty]) \times \mbox{Bor}(K^p)$ with compensator $\D s\, \ell^{\mathcal{E}}(\D\bm\nu)$. According to  Theorem \ref{thm:cone-subordinator}, it holds 
\begin{align}\label{eq:representation-L}
\langle \bm{g}, \bm{\mathcal{L}}_t \rangle = \int_0^t\int_{K^p} \langle \bm g, \bm\nu\rangle  N^\mathcal{E}(\D s, \D\bm \nu), \qquad \forall \bm g \in (K^*)^p\,.
\end{align}

In the following two lemmas we introduce an Esscher type transform and we compute the dynamics of $(\bm{\mathcal{L}}_t)_{t\geq 0}$ under the new measure.
\begin{lemma}
   Let $N^\cB$ be as described in \eqref{eq:representation-L}. Let $\bm\theta$ satisfy \eqref{eq:exp-moments}.

    Then there exists a probability measure $\MQ_T^{\bm \theta}$ on $\mathcal{F}_T$ that is absolutely continuous with respect to $\MP$. If we denote by $\MP_t$, respectively $\MQ_t^{\bm \theta}$, the restriction of $\MP$, respectively $\MQ_T^{\bm \theta}$ to $\mathcal{F}_t$, then $$\frac{d \mathbb{Q}^{\bm \theta}}{d\mathbb{P}}\big\vert_{{\mathcal{F}_t}} = Z_t^{\bm \theta}$$
defines a family of densities for every $t\leq T$.
\end{lemma}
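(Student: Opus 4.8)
The strategy is the classical one for Esscher-type changes of measure: establish that $(Z^{\bm\theta}_t)_{0\le t\le T}$ is a nonnegative $(\mathbb{F},\mathbb{P})$-martingale with $\mathbb{E}[Z^{\bm\theta}_t]=1$, use it to define $\mathbb{Q}^{\bm\theta}_T$ on $\mathcal{F}_T$, and then read off absolute continuity and the consistency of the restricted densities from the martingale and tower properties.

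First I would reduce everything to the real line. Set $R_t\df\langle\bm\theta,\mathbfcal{L}_t\rangle_p$; by the representation \eqref{eq:representation-L}, $R_t=\int_0^t\int_{K^p}\langle\bm\theta,\bm\nu\rangle_p\,N^{\mathcal{E}}(\D s,\D\bm\nu)$, so $(R_t)_{t\ge 0}$ is a real-valued pure-jump L\'evy process with respect to $\mathbb{F}$ whose L\'evy measure $m$ is the pushforward of $\ell^{\mathcal{E}}$ under the map $\bm\nu\mapsto\langle\bm\theta,\bm\nu\rangle_p$. The exponential-moment hypothesis \eqref{eq:exp-moments} then says precisely that $\int_{\mathbb{R}}(\mathrm{e}^{r}-1)\,m(\D r)<\infty$ (the integrand is $O(|r|)$ near the origin, which is already controlled by the Pettis-centering assumption of Theorem~\ref{thm:cone-subordinator}), so that $\kappa(\bm\theta)\df\int_{K^p}(\mathrm{e}^{\langle\bm\theta,\bm\nu\rangle_p}-1)\,\ell^{\mathcal{E}}(\D\bm\nu)=\int_{\mathbb{R}}(\mathrm{e}^r-1)\,m(\D r)$ is finite and $Z^{\bm\theta}_t=\exp(R_t-t\,\kappa(\bm\theta))$ is well defined and a.s.\ finite. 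Working with $R_t$ instead of $\mathbfcal{L}_t$ conveniently removes any Banach-space (in particular non-separability) subtleties, since from here on all the stochastic analysis is one-dimensional.

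Next I would invoke the standard exponential-martingale theory for real-valued L\'evy processes (as developed, e.g., in Sato's monograph or in Applebaum): because $R$ has finite exponential moment of order $1$, analytic continuation of its L\'evy–Khintchine exponent past the imaginary axis gives $\mathbb{E}[\mathrm{e}^{R_t}]=\mathrm{e}^{t\kappa(\bm\theta)}$, hence $\mathbb{E}[Z^{\bm\theta}_t]=1$ for every $t$; and, using stationarity and independence of the increments of $R$ relative to $\mathbb{F}$, one gets $\mathbb{E}[Z^{\bm\theta}_t\mid\mathcal{F}_s]=Z^{\bm\theta}_s\,\mathbb{E}\!\left[\exp\bigl(R_t-R_s-(t-s)\kappa(\bm\theta)\bigr)\right]=Z^{\bm\theta}_s$ for $0\le s\le t$. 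Thus $(Z^{\bm\theta}_t)_{t\ge 0}$ is a nonnegative, mean-one $(\mathbb{F},\mathbb{P})$-martingale. Defining $\mathbb{Q}^{\bm\theta}_T$ on $\mathcal{F}_T$ by $\D\mathbb{Q}^{\bm\theta}_T=Z^{\bm\theta}_T\,\D\mathbb{P}$ then yields a probability measure (a nonnegative density of total mass $\mathbb{E}[Z^{\bm\theta}_T]=1$) with $\mathbb{Q}^{\bm\theta}_T\ll\mathbb{P}$ by construction; and for $t\le T$ and $A\in\mathcal{F}_t\subseteq\mathcal{F}_T$, $\mathbb{Q}^{\bm\theta}_T(A)=\mathbb{E}[\mathbf{1}_A Z^{\bm\theta}_T]=\mathbb{E}\!\left[\mathbf{1}_A\,\mathbb{E}[Z^{\bm\theta}_T\mid\mathcal{F}_t]\right]=\mathbb{E}[\mathbf{1}_A Z^{\bm\theta}_t]$, so $\D\mathbb{Q}^{\bm\theta}_t/\D\mathbb{P}_t=Z^{\bm\theta}_t$ and the densities are consistent for all $t\le T$, as claimed.

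The only point requiring genuine care is converting \eqref{eq:exp-moments} into the finiteness of $\kappa(\bm\theta)$ together with the true (not merely local) martingale property of $Z^{\bm\theta}$ — equivalently, the legitimacy of the analytic continuation of the characteristic functional of $\mathbfcal{L}$ (equivalently of $R$) across the imaginary axis. Everything else is bookkeeping: the reduction to a one-dimensional L\'evy process makes the martingale step textbook, and consistency of the restricted densities follows directly from the tower property.
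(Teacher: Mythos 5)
Your proposal is correct and follows essentially the same route as the paper: the paper likewise computes $\ln\mathbb{E}[\exp(\langle\bm\theta,\mathbfcal{L}_t\rangle_p)]=t\int_{K^p}(\mathrm{e}^{\langle\bm\theta,\bm\nu\rangle_p}-1)\,\ell^{\mathcal{E}}(\D\bm\nu)$ and invokes the independent increments of $(\mathbfcal{L}_t)_{t\geq 0}$ to conclude that $(Z_t^{\bm\theta})_{t\geq 0}$ is a mean-one martingale, from which the family of consistent densities follows. Your version merely spells out the reduction to the scalar process $\langle\bm\theta,\mathbfcal{L}_t\rangle_p$, the finiteness of the cumulant, and the tower-property bookkeeping that the paper leaves implicit.
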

\begin{proof}
Observe that 
\begin{align}\label{eq:martingale-measure}
\ln\mathbb E[\exp(\langle\bm \theta,\mathbfcal L_t\rangle_p)]=t\int_{K^p} (\exp(\langle \bm \theta,\bm \nu\rangle_p)-1) \,\ell^\mathcal{E}(\D \bm \nu)\,.
\end{align}
By the latter and the independent increments of $(\bm \cL_t)_{t\geq 0}$,
we deduce that $(Z_t^{\bm \theta})_{t\geq 0}$ is a martingale with $\mathbb{E}[Z_t^{\bm \theta}] =1$. Hence the statement of the lemma follows. 
\end{proof}
We show in the following lemma that under $\mathbb Q^{\bm \theta}_t$, $\mathbfcal L_t$, $t\geq 0$, is a pure-jump L\'evy process without drift with the L\'evy measure 
$$
e^{\langle\bm \theta,\bm \nu\rangle_p}\, \ell^\cB(\D \bm \nu)
$$
supported on $K^p$, i.e., an exponential tilting of the original L\'evy measure $\ell^\cB$ in accordance with the "classical" Esscher transform. 

\begin{lemma}\label{lem:lnew-dynamoics}
Let $(\mathbfcal{L}_t)_{t\geq 0}$ be a  L\'evy process in $K^p$ with characteristics $(0,0, \ell^\cB)$. Assume there exists $\bm \theta \in (B^*)^p$ satisfying \eqref{eq:exp-moments}. It holds that 
$(\mathbfcal{L}_t)_{t\geq 0}$ remains a pure-jump L\'evy process with characteristics $(0,0, \ell_{\bm \theta}^\cB)$ under $\mathbb{Q}^{\bm \theta}_t$, for all $t\leq T$, where 
$$\ell_\theta^\cB(\D \bm \nu) = \mathrm{e}^{\langle \bm \theta, \bm \nu\rangle_p}\, \ell^\mathcal{E}(\D \bm \nu)\,.$$ 
\end{lemma}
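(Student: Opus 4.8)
The plan is to identify the $\mathbb{Q}^{\bm\theta}_T$-law of $(\mathbfcal{L}_t)_{t\ge 0}$ by computing its Laplace functional, exploiting that the density process $Z^{\bm\theta}$ is of exponential form. First I would fix $t\le T$ and $\bm g\in(K^*)^p$ and use the abstract Bayes rule together with $\mathbb{E}[Z_t^{\bm\theta}]=1$ to write
\begin{align*}
\mathbb{E}^{\mathbb{Q}^{\bm\theta}}\!\big[\E^{-\langle\bm g,\mathbfcal{L}_t\rangle_p}\big]
  &= \mathbb{E}\big[Z_t^{\bm\theta}\,\E^{-\langle\bm g,\mathbfcal{L}_t\rangle_p}\big] \\
  &= \E^{-t\int_{K^p}(\E^{\langle\bm\theta,\bm\nu\rangle_p}-1)\,\ell^{\cB}(\D\bm\nu)}\;
     \mathbb{E}\big[\E^{\langle\bm\theta-\bm g,\mathbfcal{L}_t\rangle_p}\big].
\end{align*}
Since $(\mathbfcal{L}_t)_{t\ge 0}$ has characteristics $(0,0,\ell^{\cB})$ and is supported on $K^p$, and since $\langle\bm\theta-\bm g,\bm\nu\rangle_p\le\langle\bm\theta,\bm\nu\rangle_p$ for $\bm\nu\in K^p$, condition~\eqref{eq:exp-moments} places $\bm\theta-\bm g$ in the exponential-moment domain; hence the L\'evy--Khintchine representation~\eqref{eq:Laplace-Levy-Subordinator} (equivalently, the exponential formula for the Poisson random measure $N^{\cB}$ in~\eqref{eq:representation-L}, as in~\cite[Corollary 4.29]{PZ07}) gives $\mathbb{E}[\E^{\langle\bm\theta-\bm g,\mathbfcal{L}_t\rangle_p}]=\exp\{t\int_{K^p}(\E^{\langle\bm\theta-\bm g,\bm\nu\rangle_p}-1)\,\ell^{\cB}(\D\bm\nu)\}$. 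Inserting this and combining the two exponents, then factoring $\E^{\langle\bm\theta,\bm\nu\rangle_p}$ out of the resulting difference, yields
\begin{align*}
\mathbb{E}^{\mathbb{Q}^{\bm\theta}}\!\big[\E^{-\langle\bm g,\mathbfcal{L}_t\rangle_p}\big]
  = \exp\!\Big\{ t\!\int_{K^p}\!\big(\E^{-\langle\bm g,\bm\nu\rangle_p}-1\big)\,
      \E^{\langle\bm\theta,\bm\nu\rangle_p}\,\ell^{\cB}(\D\bm\nu)\Big\}
  = \exp\!\Big\{ t\!\int_{K^p}\!\big(\E^{-\langle\bm g,\bm\nu\rangle_p}-1\big)\,
      \ell^{\cB}_{\bm\theta}(\D\bm\nu)\Big\},
\end{align*}
which is exactly the time-$t$ Laplace transform of a pure-jump $K^p$-valued L\'evy process with characteristics $(0,0,\ell^{\cB}_{\bm\theta})$, where $\ell^{\cB}_{\bm\theta}(\D\bm\nu):=\E^{\langle\bm\theta,\bm\nu\rangle_p}\,\ell^{\cB}(\D\bm\nu)$.

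Next I would verify that $\ell^{\cB}_{\bm\theta}$ is a genuine L\'evy measure on $K^p$, so that such a process exists. Near the origin $\E^{\langle\bm\theta,\bm\nu\rangle_p}\le\E^{\|\bm\theta\|\,\|\bm\nu\|}$ is bounded, so the small-jump integrability $\int_{\{\|\bm\nu\|\le1\}}(\|\bm\nu\|\wedge1)\,\ell^{\cB}_{\bm\theta}(\D\bm\nu)<\infty$ follows from the corresponding property of $\ell^{\cB}$; for the large jumps $\int_{\{\|\bm\nu\|>1\}}\ell^{\cB}_{\bm\theta}(\D\bm\nu)\le\int_{K^p}\E^{\langle\bm\theta,\bm\nu\rangle_p}\,\ell^{\cB}(\D\bm\nu)<\infty$ by~\eqref{eq:exp-moments}. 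In particular $\ell^{\cB}_{\bm\theta}$ meets the subordinator conditions of Theorem~\ref{thm:cone-subordinator} with zero drift.

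It then remains to check that $(\mathbfcal{L}_t)_{0\le t\le T}$ has independent and stationary increments under $\mathbb{Q}^{\bm\theta}_T$. For $0\le s<t\le T$ the ratio $Z_t^{\bm\theta}/Z_s^{\bm\theta}=\exp\{\langle\bm\theta,\mathbfcal{L}_t-\mathbfcal{L}_s\rangle_p-(t-s)\int_{K^p}(\E^{\langle\bm\theta,\bm\nu\rangle_p}-1)\,\ell^{\cB}(\D\bm\nu)\}$ is a measurable function of the $\mathbb{P}$-increment $\mathbfcal{L}_t-\mathbfcal{L}_s$ alone; a standard conditioning argument with this multiplicative density (rerunning the Laplace computation above started at time $s$) shows that under $\mathbb{Q}^{\bm\theta}_T$ the increment $\mathbfcal{L}_t-\mathbfcal{L}_s$ is independent of $\mathcal{F}_s$ with Laplace transform $\exp\{(t-s)\int_{K^p}(\E^{-\langle\bm g,\bm\nu\rangle_p}-1)\,\ell^{\cB}_{\bm\theta}(\D\bm\nu)\}$, hence stationary. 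Since $\mathbb{Q}^{\bm\theta}_T\ll\mathbb{P}$, the c\`adl\`ag paths and $\mathbfcal{L}_0=0$ are inherited, so $(\mathbfcal{L}_t)_{0\le t\le T}$ is a pure-jump L\'evy process with characteristics $(0,0,\ell^{\cB}_{\bm\theta})$ under $\mathbb{Q}^{\bm\theta}_T$. The only genuinely delicate points are the a priori check that $\ell^{\cB}_{\bm\theta}$ remains a L\'evy measure on $K^p$ — both the small-jump $\|\bm\nu\|\wedge1$-integrability and the survival of the large-jump mass under the exponential tilt — and the applicability of the L\'evy--Khintchine formula at the shifted argument $\bm\theta-\bm g$; both reduce to hypothesis~\eqref{eq:exp-moments}, whereas the independence-of-increments part is routine.
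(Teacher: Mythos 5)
Your proposal is correct and follows essentially the same route as the paper: insert the exponential density $Z_t^{\bm\theta}$, combine the resulting exponents via the L\'evy--Khintchine/exponential formula, and read off the tilted L\'evy measure $\E^{\langle\bm\theta,\bm\nu\rangle_p}\,\ell^{\cB}(\D\bm\nu)$ — the paper does this with the characteristic function at $\I\bm g+\bm\theta$ for $\bm g\in(B^*)^p$, whereas you use the Laplace transform at $\bm\theta-\bm g$ for $\bm g\in(K^*)^p$, which is an immaterial difference. Your additional checks (that the tilted measure is still a L\'evy measure on $K^p$ and that increments remain independent and stationary under $\mathbb{Q}^{\bm\theta}_T$) are sound and in fact supply details the paper leaves implicit.
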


\begin{proof}
Let $\ME_{\bm \theta}$ denote the expectation under the measure $\mathbb{Q}^{\bm \theta}_T$. Computing the characteristic exponent of $\mathbfcal L_T$ with respect to $\mathbb Q^{\bm \theta}_T$, yields, for any $\bm g\in (B^*)^p$,
\begin{align*}
    &\log\mathbb E_\theta\left[\exp(\mathrm{i}\langle \bm g,\mathbfcal L_t\rangle_p)\right]\\
    &\quad =\log\mathbb E\left[\exp(\mathrm{i}\langle \bm g,\mathbfcal L_t\rangle_p)\frac{d\mathbb Q^{\bm \theta}_T}{d\mathbb P}_{\big\vert_{\mathcal F_t}}\right] \\
    &\qquad=\log\mathbb E\left[\exp(\langle\mathrm{i} \bm g+\bm \theta,\mathbfcal L_t\rangle_p)\right]- \left[t\int_{K^{p}}(e^{\langle\bm \theta,\bm \nu\rangle_p}-1)\,\ell^\mathcal{E}(d\bm \nu)\right] \\
    &\qquad =t\int_{K^p}\left(e^{\mathrm{i}\langle \bm g, \bm \nu\rangle_p}-1\right) e^{\langle\bm \theta, \bm \nu\rangle_p}\, \ell^\mathcal{E}(\D \bm\nu)
\end{align*}
and the statement holds.
\end{proof}
In the following proposition we derive the generating function of our measure-valued state space process under the measure $\mathbb{Q}^{\bm \theta}_T$.
\begin{proposition}
    Let $(\bm X_t)_{t\geq 0}$ be a state space process as in Definition \ref{def:carma-process1} with parameters $\cA_p$, $\cB_p$, $\cC_q$, and $(L_t)_{t\geq 0}$ as described in Proposition \ref{prop:Fourier-transform}. Let the L\'evy measure $\ell^{\cB_p}$ on $K^p$ be as defined in \eqref{eq:measure-LB}, for $\cB =\cB_p$ and $\ell$ being the L\'evy measure of the driver $(L_t)_{t\geq 0}$ of the measure-valued OU $(\bm X_t)_{t\geq 0}$. Assume there exists $\bm \theta \in (B^*)^p$ satisfying \eqref{eq:exp-moments}. It holds for $\bm g\in (K^*)^p$
    \begin{align*}
    &\ME_\theta[\mathrm{e}^{-\langle \bm g, \bm X_t\rangle_p}]\\
    &\quad = \e^{-\langle \bm g, \cS_t\bm\nu\rangle_p}\\
    &\qquad \times  \exp\left\{-t\left(\langle \cS^*_{t-s} \bm g, \bm \gamma_0^{\bm \theta} \rangle_p + \int_{K^p}\left(1-\e^{-\langle  \cS^*_{t-s} \bm g, \bm \nu\rangle_p}\right) \e^{\langle \bm \theta, \bm \nu\rangle_p}\, \ell^{\cB_p}(\D \bm \nu) \right)\right\} \,, 
    \end{align*}
    where $\bm \gamma_0^{\bm \theta} = \cB_p \gamma + \int_{\{\|\bm \nu\|_p \leq 1\}}  \bm \nu \e^{\langle \bm \theta, \bm \nu \rangle_p}\, \, \ell^{\cB_p} (\D \bm \nu)$.
\end{proposition}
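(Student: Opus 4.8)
The plan is to reduce the problem to the $\MQ^{\bm\theta}_T$-Laplace functional of a stochastic convolution against the lifted driver and then invoke Lemma~\ref{lem:lnew-dynamoics}. First I would use the variation-of-constants representation of Proposition~\ref{prop:OU-mild-sol}, namely $\bm X_t=\cS_t\bm\nu+\int_0^t\cS_{t-s}\cB_p\,\D L_s$, and peel off the deterministic term: for $\bm g\in(K^*)^p$,
\begin{align*}
\ME_\theta\bigl[\e^{-\langle\bm g,\bm X_t\rangle_p}\bigr]=\e^{-\langle\bm g,\cS_t\bm\nu\rangle_p}\,\ME_\theta\Bigl[\exp\Bigl(-\int_0^t\langle\cS_{t-s}^{*}\bm g,\cB_p\,\D L_s\rangle_p\Bigr)\Bigr],
\end{align*}
which makes sense because $\cS_{t-s}^{*}\bm g\in(K^{*})^p$ by quasi-positivity of $\cA_p$. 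I would then split the stochastic convolution, via the L\'evy--It\^o decomposition of $L$, into a deterministic drift convolution and a stochastic convolution of the deterministic operator kernel $\cS_{t-s}$ against the $K^p$-valued pure-jump L\'evy process $\mathbfcal L$ obtained by lifting the jumps of the driver through $\cB_p$, whose $\MP$-L\'evy measure is the pushforward $\ell^{\cB_p}$ of~\eqref{eq:measure-LB}; in other words the inner expectation becomes $\ME_\theta[\exp(-\int_0^t\langle\cS_{t-s}^{*}\bm g,\D\mathbfcal L_s\rangle_p)]$ up to the drift contribution $\int_0^t\langle\cS_{t-s}^{*}\bm g,\cB_p\gamma\rangle_p\,\D s$.

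The second step evaluates this Laplace functional under $\MQ^{\bm\theta}_T$. Since the Esscher density $Z^{\bm\theta}$ is built solely from $\mathbfcal L$ and the kernel $\cS_{t-s}$ is deterministic, the independent-increments structure of $\mathbfcal L$ persists under $\MQ^{\bm\theta}_T$, and the exponential formula for integrals of deterministic integrands against L\'evy processes --- applied exactly as in the proof of Proposition~\ref{prop:transition-semigroup}, cf.~\cite[Corollary~4.29]{PZ07} --- yields $\ME_\theta[\exp(-\int_0^t\langle\cS_{t-s}^{*}\bm g,\D\mathbfcal L_s\rangle_p)]=\exp(-\int_0^t\psi^{\MQ^{\bm\theta}}_{\mathbfcal L}(\cS_{t-s}^{*}\bm g)\,\D s)$, where $\psi^{\MQ^{\bm\theta}}_{\mathbfcal L}$ is the Laplace exponent of $\mathbfcal L$ under $\MQ^{\bm\theta}_T$. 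By Lemma~\ref{lem:lnew-dynamoics} the jump intensity of $\mathbfcal L$ under $\MQ^{\bm\theta}_T$ is $\e^{\langle\bm\theta,\bm\nu\rangle_p}\ell^{\cB_p}(\D\bm\nu)$ on $K^p$, so that, collecting this with the drift $\cB_p\gamma$ of the lift and the small-jump compensator re-weighted by $\e^{\langle\bm\theta,\bm\nu\rangle_p}$,
\begin{align*}
\psi^{\MQ^{\bm\theta}}_{\mathbfcal L}(\bm f)=\langle\bm f,\bm\gamma_0^{\bm\theta}\rangle_p+\int_{K^p}\bigl(1-\e^{-\langle\bm f,\bm\nu\rangle_p}\bigr)\e^{\langle\bm\theta,\bm\nu\rangle_p}\,\ell^{\cB_p}(\D\bm\nu),
\end{align*}
with $\bm\gamma_0^{\bm\theta}$ the drift stated in the proposition. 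Substituting this back --- reading the prefactor $t$ in the statement as the time-integral $\int_0^t\cdots\,\D s$ of the kernel depending on $t-s$ --- and recombining with $\e^{-\langle\bm g,\cS_t\bm\nu\rangle_p}$ gives the asserted formula.

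The hard part will be the drift bookkeeping in the second step: one has to track carefully which truncation is used when compensating the small jumps of $\mathbfcal L$ and how the Esscher weight $\e^{\langle\bm\theta,\bm\nu\rangle_p}$ shifts that compensator, so that the linear part of $\psi^{\MQ^{\bm\theta}}_{\mathbfcal L}$ comes out exactly as $\bm\gamma_0^{\bm\theta}$ and not merely up to the original compensator. This is also where the exponential-moment hypothesis~\eqref{eq:exp-moments} is used: it is precisely what makes $\int_{\{\|\bm\nu\|_p\le1\}}\bm\nu\,\e^{\langle\bm\theta,\bm\nu\rangle_p}\ell^{\cB_p}(\D\bm\nu)$, and hence the whole exponent $\psi^{\MQ^{\bm\theta}}_{\mathbfcal L}(\cS_{t-s}^{*}\bm g)$ together with its integral over $[0,t]$, finite. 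The remaining ingredients --- the variation-of-constants identity, the Pettis-integrability of the deterministic kernel, and the exponential formula --- are inherited essentially verbatim from Propositions~\ref{prop:OU-mild-sol} and~\ref{prop:transition-semigroup}.
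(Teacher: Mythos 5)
Your proposal is correct and follows essentially the same route as the paper: both reduce the claim to Lemma~\ref{lem:lnew-dynamoics} (the Esscher tilt of the lifted L\'evy measure $\ell^{\cB_p}$) and then invoke the Laplace-transform machinery of Proposition~\ref{prop:transition-semigroup} applied to the variation-of-constants representation, the paper merely phrasing this via the weak-form dynamics of $\bm X$ under $\mathbb{Q}^{\bm\theta}$ rather than via the exponential formula directly. The drift bookkeeping you flag as the delicate step is precisely the point the paper's own proof also leaves implicit, so your level of detail matches (indeed slightly exceeds) the published argument.
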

\begin{proof} 
Denote by $N_{\bm \theta}^{\cB_p}(\D s, \D \bm \nu)$ the Poisson random measure with intensity $\D s \, \ell^\cB_{\bm \theta}(\D \bm \nu)$ and by $\tilde{N}_{\bm \theta}^{\cB_p}(\D s, \D \bm \nu)$ the compensated Poisson random measure.  Observe that from Lemma \ref{lem:lnew-dynamoics}, the dynamics of $\bm X_t$ under $\mathbb{Q}_t^{\bm \theta}$, $t \geq 0$, 
for any test function $\mathbf{g} \in D(\mathcal{A}^*_p) \cap (B^*)^{p}$, and $t \geq 0$, is given by
\begin{align*}
  \langle \mathbf{g}, \mathbf{X}_t \rangle_p &= \langle \mathbf{g}, \mathbf{X}_0 \rangle_p +t\langle \mathbf{g}, \cB_p\gamma \rangle_p+\int_0^t \langle \mathcal{A}^*_p \mathbf{g}, \mathbf{X}_s \rangle_p \D s \nonumber\\ &\quad+ t\int_{\set{ \norm{\bm \nu}_p< 1}} \langle \mathbf{g}, \bm \nu\rangle_p (\e^{\langle \bm \theta , \bm \nu\rangle_p} - 1) \, \ell^{\cB_p}(\bm \D \nu)\nonumber\\ &\quad+ \int_0^t\int_{\set{ \norm{\bm \nu}_p< 1}} \langle \mathbf{g}, \bm \nu \rangle_p \, \tilde{N}_{
  \bm \theta}^{\cB_p}(\D s,\D \bm \nu) \nonumber\\
&\quad   + \int_0^t\int_{\set{ \norm{\bm \nu}_p\geq 1}} \langle \mathbf{g}, \bm \nu \rangle_p \, N_{\bm \theta}^{\cB_p}(\D s,\D \bm \nu)\,.
\end{align*}
The expression for the Laplace transform follows from Proposition \ref{prop:transition-semigroup}.
\end{proof}

Note that $\bm \theta\in (B^*)^p$, meaning that $\bm \theta$ is a $p$-vector of functions that might take negative values and
therefore we may have $\langle\bm \theta,\bm \nu\rangle_p<0$. For example, this opens up for flexibility in modelling both negative and positive risk premia in electricity flow forward markets (see e.g. \cite{BK08}.).

\bibliographystyle{acm}
\bibliography{reference_list}

\begin{thebibliography}{10}

\bibitem{App15}
{\sc Applebaum, D.}
\newblock {Infinite dimensional Ornstein-Uhlenbeck processes driven by Lévy
  processes}.
\newblock {\em Probability Surveys 12\/} (2015), 33 – 54.

\bibitem{Are84}
{\sc {Arendt}, W.}
\newblock {Generators of positive semigroups and resolvent positive operators}.
\newblock {Mathematisches Institut der Universit\"at T\"ubingen.}, 1984.

\bibitem{axler2020measure}
{\sc Axler, S.}
\newblock {\em Measure, Integration \& Real Analysis}.
\newblock Springer Nature, 2020.

\bibitem{BNBV18}
{\sc Barndorff-Nielsen, O.~E., Benth, F.~E., and Veraart, A. E.~D.}
\newblock {\em Ambit Stochastics}, vol.~88 of {\em Probab. Theory Stoch.
  Model.}
\newblock Cham: Springer, 2018.

\bibitem{BK23}
{\sc Benth, F.~E., and Karbach, S.}
\newblock Multivariate continuous-time autoregressive moving-average processes
  on cones.
\newblock {\em Stochastic Processes Appl. 162\/} (2023), 299--337.

\bibitem{benth2020pricing}
{\sc Benth, F.~E., Khedher, A., and Vanmaele, M.}
\newblock Pricing of commodity derivatives on processes with memory.
\newblock {\em Risks 8}, 1 (2020), 8.

\bibitem{BKM14}
{\sc Benth, F.~E., Klüppelberg, C., Müller, G., and Vos, L.}
\newblock Futures pricing in electricity markets based on stable carma spot
  models.
\newblock {\em Energy Economics 44\/} (2014), 392--406.

\bibitem{BK08}
{\sc Benth, F.~E., and Koekebakker, S.}
\newblock Stochastic modeling of financial electricity contracts.
\newblock {\em Energy Economics 30}, 3 (2008), 1116--1157.

\bibitem{benth2015derivatives}
{\sc Benth, F.~E., and Krühner, P.}
\newblock Derivatives pricing in energy markets: an infinite-dimensional
  approach.
\newblock {\em SIAM Journal on Financial Mathematics 6}, 1 (2015), 825--869.

\bibitem{BS18}
{\sc Benth, F.~E., and S{\"u}ss, A.}
\newblock Continuous-time autoregressive moving-average processes in {Hilbert}
  space.
\newblock In {\em Computation and combinatorics in dynamics, stochastics and
  control. The Abel symposium, Rosendal, Norway, August 16--19, 2016. Selected
  papers}. Cham: Springer, 2018, pp.~297--320.

\bibitem{BT13}
{\sc Benth, F.~E., and Taib, C. M. I.~C.}
\newblock On the speed towards the mean for continuous time autoregressive
  moving average processes with applications to energy markets.
\newblock {\em Energy Economics 40\/} (2013), 259--268.

\bibitem{BB12}
{\sc Benth, F.~E., and \v{S}altyt{\.e} Benth, J.}
\newblock {\em Modeling and Pricing in Financial Markets for Weather
  Derivatives}, vol.~17 of {\em Adv. Ser. Stat. Sci. Appl. Probab.}
\newblock Hackensack, NJ: World Scientific, 2012.

\bibitem{BB09}
{\sc Benth, F.~E., and {Šaltytė Benth}, J.}
\newblock Dynamic pricing of wind futures.
\newblock {\em Energy Economics 31}, 1 (2009), 16--24.

\bibitem{BL13}
{\sc Brockwell, P., and Lindner, A.}
\newblock Integration of {CARMA} processes and spot volatility modelling.
\newblock {\em J. Time Ser. Anal. 34}, 2 (2013), 156--167.

\bibitem{Bro01}
{\sc {Brockwell}, P.~J.}
\newblock {L\'evy-driven CARMA processes}.
\newblock {\em {Ann. Inst. Stat. Math.} 53}, 1 (2001), 113--124.

\bibitem{BS13}
{\sc Brockwell, P.~J., and Schlemm, E.}
\newblock Parametric estimation of the driving {L{\'e}vy} process of
  multivariate {CARMA} processes from discrete observations.
\newblock {\em J. Multivariate Anal. 115\/} (2013), 217--251.

\bibitem{CM87}
{\sc {Chojnowska-Michalik}, A.}
\newblock {On processes of Ornstein-Uhlenbeck type in Hilbert space}.
\newblock {\em {Stochastics} 21\/} (1987), 251--286.

\bibitem{cox2024infinite}
{\sc Cox, S., Cuchiero, C., and Khedher, A.}
\newblock Infinite-dimensional {W}ishart processes.
\newblock {\em Electronic Journal of Probability 29\/} (2024), 1--46.

\bibitem{cox2022affine}
{\sc Cox, S., Karbach, S., and Khedher, A.}
\newblock Affine pure-jump processes on positive {H}ilbert-{S}chmidt operators.
\newblock {\em Stochastic Process. Appl. 151\/} (2022), 191--229.

\bibitem{cox2022infinite}
{\sc Cox, S., Karbach, S., and Khedher, A.}
\newblock An infinite-dimensional affine stochastic volatility model.
\newblock {\em Mathematical Finance 32}, 3 (2022), 878--906.

\bibitem{cuchiero2022measurevalued}
{\sc Cuchiero, C., Di~Persio, L., Guida, F., and Svaluto-Ferro, S.}
\newblock Measure-valued processes for energy markets.
\newblock {\em Mathematical Finance\/} (2022).

\bibitem{DRTL16}
{\sc David, M., Ramahatana, F., Trombe, P., and Lauret, P.}
\newblock Probabilistic forecasting of the solar irradiance with recursive arma
  and garch models.
\newblock {\em Solar Energy 133\/} (2016), 55--72.

\bibitem{EN00}
{\sc {Engel}, K.~J., and {Nagel}, R.}
\newblock {\em {One-Parameter Semigroups for Linear Evolution Equations}},
  vol.~194.
\newblock Berlin: Springer, 2000.

\bibitem{Eng10}
{\sc Englander, J.}
\newblock The center of mass for spatial branching processes and an application
  for self-interaction.
\newblock {\em Electron. J. Probab. 15\/} (2010), 1938--1970.
\newblock Id/No 63.

\bibitem{escher1932probability}
{\sc Escher, F.}
\newblock On the probability function in the collective theory of risk.
\newblock {\em Skand. Aktuarie Tidskr. 15\/} (1932), 175--195.

\bibitem{gerber1993option}
{\sc Gerber, H.~U., and Shiu, E. S.~W.}
\newblock Option pricing by {E}sscher transforms.
\newblock {\em Trans. Soc. Actuaries 46\/} (1994), 99--191.

\bibitem{GS75}
{\sc Gihman, I.~I., and Skorohod, A.~V.}
\newblock {\em The Theory of Stochastic Processes {II}. {Translated} from the
  {Russian} by {S}. {Kotz}}, vol.~218 of {\em Grundlehren Math. Wiss.}
\newblock Springer, Cham, 1975.

\bibitem{Gil13}
{\sc Gill, H.}
\newblock A super {Ornstein}-{Uhlenbeck} process interacting with its center of
  mass.
\newblock {\em Ann. Probab. 41}, 2 (2013), 989--1029.

\bibitem{HL98}
{\sc {Herzog}, G., and {Lemmert}, R.}
\newblock {On quasipositive elements in ordered Banach algebras}.
\newblock {\em {Stud. Math.} 129}, 1 (1998), 59--65.

\bibitem{kallsen2002cumulant}
{\sc Kallsen, J., and Shiryaev, A.~N.}
\newblock The cumulant process and {E}sscher's change of measure.
\newblock {\em Finance and stochastics 6\/} (2002), 397--428.

\bibitem{LGB}
{\sc {Larsson}, K., {Green}, R., and {Benth}, F.~E.}
\newblock {A stochastic time-series model for solar irradiation}.
\newblock {\em {Energy Econ.} 117\/} (2023), 106421.

\bibitem{Lem98}
{\sc {Lemmert}, R., and {Volkmann}, P.}
\newblock {On the positivity of semigroups of operators}.
\newblock {\em {Commentat. Math. Univ. Carol.} 39}, 3 (1998), 483--489.

\bibitem{Li11}
{\sc Li, Z.}
\newblock {\em Measure-Valued Branching {Markov} Processes}.
\newblock Probab. Appl. Berlin: Springer, 2011.

\bibitem{MS07}
{\sc {Marquardt}, T., and {Stelzer}, R.}
\newblock {Multivariate CARMA processes}.
\newblock {\em {Stochastic Processes Appl.} 117}, 1 (2007), 96--120.

\bibitem{ARA06}
{\sc P{\'e}rez-Abreu, V., and Rocha-Arteaga, A.}
\newblock On the {L{\'e}vy}-{Khintchine} representation of {L{\'e}vy} processes
  in cones of {Banach} spaces.
\newblock {\em Publ. Mat. Urug. 11\/} (2006), 41--55.

\bibitem{PZ07}
{\sc {Peszat}, S., and {Zabczyk}, J.}
\newblock {\em {Stochastic Partial Differential Equations with L\'evy Noise. An
  Evolution Equation Approach.}}, vol.~113.
\newblock Cambridge: Cambridge University Press, 2007.

\bibitem{pettis1938integration}
{\sc Pettis, B.~J.}
\newblock On integration in vector spaces.
\newblock {\em Transactions of the American Mathematical Society 44}, 2 (1938),
  277--304.

\bibitem{pham2020levy}
{\sc Pham, V.~S.}
\newblock L{\'e}vy-driven causal {CARMA} random fields.
\newblock {\em Stochastic Processes and their Applications 130}, 12 (2020),
  7547--7574.

\bibitem{Rie15}
{\sc Riedle, M.}
\newblock Ornstein-{Uhlenbeck} processes driven by cylindrical {L{\'e}vy}
  processes.
\newblock {\em Potential Anal. 42}, 4 (2015), 809--838.

\bibitem{RvG09}
{\sc Riedle, M., and van Gaans, O.}
\newblock Stochastic integration for {L{\'e}vy} processes with values in
  {Banach} spaces.
\newblock {\em Stochastic Processes Appl. 119}, 6 (2009), 1952--1974.

\bibitem{RA06}
{\sc Rocha-Arteaga, A.}
\newblock Subordinators in a class of {Banach} spaces.
\newblock {\em Random Oper. Stoch. Equ. 14}, 3 (2006), 245--258.

\bibitem{schilling2017measures}
{\sc Schilling, R.~L.}
\newblock {\em Measures, Integrals and Martingales}.
\newblock Cambridge University Press, 2017.

\bibitem{TT06}
{\sc Todorov, V., and Tauchen, G.}
\newblock Simulation methods for {L}évy-driven continuous-time autoregressive
  moving average ({CARMA}) stochastic volatility models.
\newblock {\em Journal of Business \& Economic Statistics 24}, 4 (2006),
  455--469.

\bibitem{TC05}
{\sc {Tsai}, H., and {Chan}, K.~S.}
\newblock {A note on non-negative continuous time processes}.
\newblock {\em {J. R. Stat. Soc., Ser. B, Stat. Methodol.} 67}, 4 (2005),
  589--597.

\bibitem{varadarajan1958weak}
{\sc Varadarajan, V.~S.}
\newblock Weak convergence of measures on separable metric spaces.
\newblock {\em Sankhy{\=a}: The Indian Journal of Statistics (1933-1960) 19},
  1/2 (1958), 15--22.

\end{thebibliography}
\end{document}